\newcommand{\CC}{\mathbb{C}}
\newcommand{\GG}{\mathcal{G}}
\newcommand{\PP}{\mathbb{P}}
\newcommand{\RR}{\mathbb{R}}
\newcommand{\ZZ}{\mathbb{Z}}
\newcommand{\1}{\mathbbm{1}}
\newcommand{\BB}{\mathcal{B}}
\newcommand{\VV}{\mathcal{V}}
\newcommand{\LL}{\Lambda}
\newcommand{\rk}{\textup{rank}\,}
\newcommand{\conv}{\textup{conv}}
\newcommand{\cone}{\textup{cone}}
\newcommand{\GRRmap}{\mathbf{GrR}}
\newcommand{\GVmap}{\mathbf{GV}}
\newcommand{\VGmap}{\mathbf{VG}}
\newcommand{\RGRmap}{\mathbf{RGr}}
\newcommand{\GRVmap}{\mathbf{GrV}}
\newcommand{\VGRmap}{\mathbf{VGr}}
\newcommand{\xx}{\mathbf x}
\renewcommand{\ss}{\mathbf s}
\newcommand{\vv}{\mathbf v}
\newcommand{\qq}{\mathbf q}
\newcommand{\pp}{\mathbf p}
\newcommand{\rr}{\mathbf r}
\newcommand{\ww}{\mathbf w}
\newcommand{\aalpha}{\boldsymbol \alpha}
\newcommand{\mat}{\bm}
\renewcommand{\tt}{\mathbf t}
\newcommand{\bb}{\mathbf b}
\newtheorem{theorem}{Theorem}[section]
\newtheorem{lemma}[theorem]{Lemma}
\newtheorem{corollary}[theorem]{Corollary}
\newtheorem{proposition}[theorem]{Proposition}
\theoremstyle{definition}
\newtheorem{definition}[theorem]{Definition}
\newtheorem{example}[theorem]{Example}
\theoremstyle{remark}
\newtheorem{remark}[theorem]{Remark}
\title[]{Combining realization space models of polytopes}
\author[J. Gouveia]{Jo{\~a}o Gouveia}
\address{CMUC, Department of Mathematics,
  University of Coimbra, 3001-454 Coimbra, Portugal}
\email{jgouveia@mat.uc.pt}
\author[A. Macchia]{Antonio Macchia}
\address{Fachbereich Mathematik und Informatik, Freie Universit\"at Berlin, Arnimallee 2, 14195 Berlin, Germany}
\email{macchia.antonello@gmail.com}
\author[A. Wiebe]{Amy Wiebe}
\address{Fachbereich Mathematik und Informatik, Freie Universit\"at Berlin, Arnimallee 2, 14195 Berlin, Germany}
\email{w.amy.math@gmail.com}
\thanks{Gouveia was supported by the Centre for Mathematics of the University of Coimbra--UID/MAT/00324/2019, funded by the Portuguese Government through FCT/MEC and co-funded by the European Regional Development Fund through the Partnership Agreement PT2020. Macchia was supported by the Einstein Foundation Berlin under Francisco Santos grant EVF-2015-230.}
\begin{document}

\begin{abstract}
In this paper we examine four different models for the realization space of a polytope: the classical model, the Grassmannian model, the Gale transform model, and the slack variety. Respectively, they identify realizations of the polytopes with the matrix whose columns are the coordinates of their vertices, the column space of said matrix, their Gale transforms, and their slack matrices. Each model has been used to study realizations of polytopes. In this paper we establish very explicitly the maps that allow us to move between models, study their precise relationships, and combine the strengths of different viewpoints. As an illustration, we combine the compact nature of the Grassmannian model with the slack variety to obtain a reduced slack model that allows us to perform slack ideal calculations that were previously out of computational reach. These calculations allow us to {answer the question of \cite{CS19}, about the realizability of a family of prismatoids, in general in the negative by proving the non-realizability of one of them.} 
\end{abstract}

\keywords{polytopes; cones; realization spaces; Grassmannian; slack matrix; slack ideal; Gale transforms}

\subjclass[2010]{52B99,14P10}

\maketitle


%
%


\section{Introduction} \label{sec:introduction}

The study of realization spaces of polytopes has a long history in discrete geometry, dating back at least to the works of Legendre in the 18th century \cite{Leg94}. Given the combinatorics of a polytope, he was interested in knowing how many degrees of freedom its geometric realizations had. In the early 20th century, Steinitz made great advances in the study of realization spaces of $3$-dimensional polytopes \cite{S22,SR76}. From that time onward, the study of realizations of polytopes has stayed a very active and interesting topic in discrete geometry with many important advances \cite{RGZ95}, \cite{RG96}, \cite{Grunbaum}, \cite{D14}, \cite{Z08}, \cite{AZ15}, \cite{F17}. Besides the dimension questions, just deciding if any realizations exist, with or without extra restrictions (such as rationality of the vertex coordinates, or being inscribed in a sphere) is a hard problem which is the subject of much interest.

In order to work with the realization space of a combinatorial polytope, i.e. the set of all its geometric realizations, one has to make it  explicit. These spaces are quite simple for {polytopes of} dimension up to three. The results of Steinitz mentioned above characterize the realizability and the dimension of realization spaces of $3$-dimensional polytopes, and its proof can also be used to show that the realization space of a 3-polytope is {\em contractible} (a trivial set topologically) \cite[Theorem 13.3.3]{RG96}. They can, however become arbitrarily complicated starting from dimension four. This was proven in 1986 by Mn\"ev \cite{Mnev} in the case of rank 3 oriented matroids, and strengthened to include the case of 4-dimensional polytopes by Richter-Gebert in 1996 \cite{RG96}. The Universality Theorem for $4$-polytopes states that for any basic primary semialgebraic set $V$ defined over $\ZZ$ there is a $4$-polytope whose realization space is stably equivalent to $V$.

The more straightforward approach to work with these spaces, which we will call the \textit{classical model}, is to simply take for any realization of a polytope the matrix whose rows are the coordinates of the realizations of the vertices. This is the model Richter-Gebert makes use of, and it has the virtue of being extremely explicit.
One possible alternative is to think of simply the column space of this matrix as a point of the \textit{Grassmannian}. This approach is quite common in the matroid literature for instance \cite[p. 21]{RG96}. Another very classical tool to study the realizations of polytopes is the \textit{Gale diagram}, which replaces the polytope by an arrangement of vectors that codify the affine dependencies of its vertices \cite[Section 5.4]{Grunbaum}. Recently the authors (with Rekha Thomas) introduced another alternative tool to model this space, the \textit{slack variety model}, that represents each polytope by its slack matrix, the matrix attained by evaluating its defining inequalities at all its vertices \cite{GMTWfirstpaper}, \cite{GMTWsecondpaper}.

In this paper we will briefly explain these four ways of looking at the same realization space. Our goal is to highlight how they connect to each other and explicitly and rigorously describe how to go from one model of the realization space to another. The relations between all these sets are schematically presented in Figure~\ref{FIG:relatemodelsmaster}, which includes the maps between sets and references to where in this paper these maps are defined and the relations proved. Our main motivation is to provide tools that allow one to go easily from model to model, combining the strengths of each of them while avoiding their disadvantages.

As an illustration of that philosophy we apply the intuition obtained from the Grassmannian model in order to reduce the size of the slack model, while keeping its expressive power, obtaining a new \textit{reduced slack matrix model} that allows us to make explicit computations in cases where previously the size made them prohibitive.

This paper is a natural continuation of \cite{GMTWfirstpaper} and \cite{GMTWsecondpaper} as it places the slack variety into the larger context of realization space models, and in doing so improves the computational efficiency of the slack model. Large portions of an early version of this paper can also be found in \cite{W19}.

Some of the maps in Figure~\ref{FIG:relatemodelsmaster} are also used to study the slack realization space in the matroid setting \cite{BW19}. In fact, essentially all the results in this paper have a direct translation to the case of matroid realization spaces, which is still an active area of research \cite{RS91}, \cite{FMM13}.

{\bf Organization of the paper.} In Section~\ref{sec:classical} we describe the classical model for realization spaces, as studied in \cite{RG96}, and generalize it to the setting of polyhedral cones. In Section~\ref{sec:grassmannian} we describe the realization space of a $(d+1)$-dimensional polyhedral cone when viewed as a subset of the Grassmannian of $(d+1)$-dimensional subspaces of $\RR^v$. In Theorem~\ref{THM:raysGrassequiv} we prove the equivalence of the Grassmannian and classical models. We then define a convenient algebraic relaxation of this model, called the {\em Grassmannian of the cone}. In Section~\ref{sec:slackmodel} we describe the basic setup of the slack realization space of a cone, as introduced in \cite{GMTWfirstpaper} for polytopes. One of the main results of the paper is found in Theorem ~\ref{THM:slackGrassequiv} which shows that the Grassmannian and slack realization space models are equivalent by providing explicit maps between the two settings. In this section we  see how this result  can be used to provide parameterizations  of the slack variety and a first simplification  of the slack ideal, called the {\em Grassmannian section ideal} of the cone. In Section~\ref{sec:galemodel} we generalize the notion of a Gale transform of a polytope, and give an algebraic characterization of these generalized Gale transforms which lead to a ``dual'' Grassmannian realization space for the cone (Corollary \ref{COR:dualGrassSlackEquiv}). In Section~\ref{sec:applications} we provide an application of the results of the previous sections, namely the equivalence of the slack and other models, to further simplify the slack ideal. We show how the size of a slack matrix can be reduced, improving computational efficiency of the model while retaining all the information of the full slack matrix (see Proposition \ref{PROP:reducedSlackModel}). This is illustrated on several examples, including the well-studied Perles non-rational polytope, introduced in \cite{Grunbaum}, and studied  from a slack perspective in \cite{GMTWsecondpaper}. We also consider {a large quasi-simplicial 4-sphere constructed by Criado and Santos \cite{CS19}, the realizability of which was previously unknown.} 

{\bf Acknowledgments.} We would like to thank Rekha Thomas for  many helpful discussions and her feedback on early versions of this paper. We also want to thank Lauren Williams and  Melissa Sherman-Bennett for pointing out the possible connection between the Grassmannian model and positroids. All the computations of this paper were performed with the aid of the \texttt{SlackIdeals} package \cite{MW20}, available at \url{http://www2.macaulay2.com/Macaulay2/doc/Macaulay2-1.16/share/doc/Macaulay2/SlackIdeals/html/index.html}, developed for the software \textit{Macaulay2} by the second and third author.

\begin{figure}[ht!]
\vspace*{2cm}
\begin{minipage}[c]{\textwidth}
{\renewcommand{\arraystretch}{0.75}
\scalebox{0.8}{
\begin{tikzpicture}[scale=0.8]
\draw[thick] (10,-10) node[draw, fill=blue!25, rounded corners = 10pt, minimum size = 55pt, inner sep =-1pt, label=below:Slack Model] (slack)
{\begin{tabular}{c} $\VV_+(I_K)/\RR^f$ \end{tabular}};
\draw[thick] (10,-13.5) node[draw,  rounded corners = 10pt, minimum size = 35pt, inner sep =-1pt] (Rslack)
{\begin{tabular}{c} $\VV(I_K)^*/\RR^f$ \end{tabular}};
\draw [->,decorate, decoration={snake,amplitude=.4mm,segment length=2mm,post length=1mm}] (slack)++(0,-1.8) -- node[label={[label distance=-5pt]right:{\raisebox{2pt}{\small relax}}}] {} (Rslack);
\draw[thick] (10,0) node[draw, fill=blue!25,  rounded corners = 10pt, inner sep = -2pt, minimum size=55pt, label=above:Grassmannian Model] (grass)
{\begin{tabular}{c}$\LL\in Gr_+(K)$ \\ with $\1\in\LL$ \end{tabular}};
\draw[thick,dashed] (10,3.15) node[draw,   rounded corners = 10pt, inner sep = -2pt, minimum size=25pt] (Kgrass)
{\begin{tabular}{c}$Gr_+(K)$ \end{tabular}};
\draw[thick] (10,4.8) node[draw,   rounded corners = 10pt, inner sep = -2pt, minimum size=35pt] (Rgrass)
{\begin{tabular}{c}$Gr(K)$ \end{tabular}};
\draw [->,decorate, decoration={snake,amplitude=.4mm,segment length=2mm,post length=2mm}] (grass)++(0,1.7) -- node[label={[label distance=-5pt]right:{\small relax}}] {} (Kgrass) -- (Rgrass);
\draw[thick] (0,0) node[draw, fill=blue!25, rounded corners = 10pt, inner sep = 1pt, minimum size=55pt, label=above:Classical Model] (stand)
{\begin{tabular}{c} $\mathcal{R}(P)$ modulo \\ affine transformation \end{tabular}};
\draw[thick,dashed] (0,3.15) node[draw,  rounded corners = 10pt, inner sep = 1pt, minimum size=25pt] (Kstand)
{\begin{tabular}{c}$\mathcal{R}(K)/GL_{d+1}(\RR)$ \end{tabular}};
\draw[thick] (0,4.8) node[draw,  rounded corners = 10pt, inner sep = 1pt, minimum size=35pt] (Rstand)
{\begin{tabular}{c}$\RR^{v\times (d+1)}/GL_{d+1}(\RR)$ \\ with prescribed  ``combinatorics''\end{tabular}};
\draw [->,decorate, decoration={snake,amplitude=.4mm,segment length=2mm,post length=2mm}] (stand)++(0,1.7) --node[label={[label distance=-5pt]left:{\small relax}}] {} (Kstand) -- (Rstand);
\draw[thick] (0,-10) node[draw, fill=blue!25, rounded corners = 10pt, minimum size=55pt, label=below:Gale Model] (gale)
{\begin{tabular}{c} Gale transforms \\ in $Gr^*(K)$ \end{tabular}};
\draw[thick] (0,-13.5) node[draw,  rounded corners = 10pt, minimum size=35pt] (Rgale)
{\begin{tabular}{c} $Gr^*(K)$ \end{tabular}};
\draw [->,decorate, decoration={snake,amplitude=.4mm,segment length=2mm,post length=1mm}] (gale)++(0,-1.8) -- node[label={[label distance=-5pt]left:{\raisebox{2pt}{\small relax}}}] {} (Rgale);
\draw[thick] (6,-13.5) node[draw,  rounded corners = 10pt, minimum size = 35pt, inner sep =-1pt] (section)
{\begin{tabular}{c} $\VV(I_{d+1,v}(K))^*$ \end{tabular}};
\draw[{<[length=3mm,width=2mm,angle'=45,open]}-{>[length=3mm,width=2mm,angle'=45,open]},thick]  (Rslack) -- 
(section);
\draw[-{>[length=4mm,width=2mm,angle'=45,open]},thick]
(stand) edge[bend right=20] node[label={[label distance = -8pt]left:{\begin{tabular}{c} space of \\ affine \\ dependencies\end{tabular}}}]{} (gale);
\draw[{<[length=4mm,width=2mm,angle'=45,open]}-,thick]
(stand) edge[bend right=0] node[label={[label distance = -8pt]right:{\begin{tabular}{c} see \\ \cite[\S5]{Grunbaum}\end{tabular}}}]{} (gale);\draw[{<[length=4mm,width=2mm,angle'=45,open]}-{>[length=4mm,width=2mm,angle'=45,open]},thick]
(gale) edge node[label={[label distance=-8pt]150:$\perp$}]{} (grass);
\draw[-{>[length=4mm,width=2mm,angle'=45,open]},thick]
(grass) edge[bend right=20] node[label={[label distance =-5pt]above:$\GRRmap$}]{}  (stand);
\draw[{<[length=4mm,width=2mm,angle'=45,open]}-,thick]
(grass) edge[bend left=0] node[label={[label distance =-5pt]above:$\RGRmap\,(=\rho)$},swap]{} (stand);
\draw[-{>[length=4mm,width=2mm,angle'=45,open]},thick]
(gale) edge[bend right=0] node[label={[label distance =-2pt]below:$\GVmap = \GRVmap\,\circ\perp$}]{}  (slack);
\draw[{<[length=4mm,width=2mm,angle'=45,open]}-,thick]
(gale) edge[bend right=20] node[label={[label distance =-1pt]below:$\VGmap = \perp\circ\,\VGRmap$},swap]{} (slack);
\draw[-{>[length=4mm,width=2mm,angle'=45,open]},thick]
(grass) edge[bend left=0] node[label={[label distance =-5pt]right:$\GRVmap$}]{}  (slack);
\draw[{<[length=4mm,width=2mm,angle'=45,open]}-,thick]
(grass) edge[bend left=20] node[label={[label distance =-5pt]right:$\VGRmap$},swap]{} (slack);
\draw (5.2,-0) node[label=below:{\begin{tabular}{c}Theorem~\ref{THM:raysGrassequiv} \\[5pt] Corollary~\ref{COR:polytopeGrassequiv} \\[5pt] Corollary~\ref{COR:matrixGrassequiv}\end{tabular}}] {};
\draw (8.6,-4.6) node[label=below:{\begin{tabular}{c}Theorem~\ref{THM:slackGrassequiv} \end{tabular}}] {};
\draw (8.2,-14.1) node[label=below:{\begin{tabular}{c}Corollary~\ref{COR:sectionvariety} \end{tabular}}] {};
\draw (5.2,-9.3) node[label=below:{\begin{tabular}{c}Corollary~\ref{COR:slackDgrassequiv} \end{tabular}}] {};
\end{tikzpicture}
}
}\vspace{-10pt}
\caption{Realization space models for polytope $P$ with $K=P^h$ the homogenization cone of $P$ and the relationships between them.}
\label{FIG:relatemodelsmaster}
\end{minipage}
\vspace*{0.5cm}
\end{figure}


%
%


\section{The classical model} \label{sec:classical}

Let $P$ be an abstract labeled $d$-polytope with $v$ vertices and $f$ facets.
A \textit{realization} of $P$ is an assignment of coordinates to each vertex label, $i \mapsto \qq_i\in\RR^k$ so that the polytope $Q = \conv\{\qq_1,\ldots, \qq_v\}$ is combinatorially equivalent to $P$; that is, it has the same face lattice as $P$. 
\\

\noindent\begin{minipage}{0.61\textwidth}
\begin{example}
If $P$ is a pentagon with $5$ vertices organized into facets $\{1,2\}$, $\{2,3\}$, $\{3,4\}$, $\{4,5\}$, and $\{5,1\}$, then
\[
\begin{array}{ll} \qq_1 = (0,0), &  \qq_2 = (1,0), \\  \qq_3 = (2,1), &  \qq_4 = (1,2), \\  \qq_5 = (0,1) \in \RR^2 \end{array}
\]
gives a realization $Q = \conv\{\qq_1,\ldots,\qq_5\}$ of $P$.
\label{EX:pentagon}
\end{example}
\end{minipage}\hspace{10pt}\begin{minipage}{0.35\textwidth}
\begin{tikzpicture}
\draw[<->, thick] (0,-0.6) -- (0,2.75);
\draw[<->, thick] (3,0) -- (-0.6,0);
\draw[fill=blue!25] (0,0) -- (1,0) -- (2,1) -- (1,2) -- (0,1) -- (0,0);
\draw (0,0) node[circle, fill, blue, inner sep = 0pt, minimum size = 4pt, label=-135:$1$]{} ;
\draw (1,0) node[circle, fill, blue, inner sep = 0pt, minimum size = 4pt, label=below:$2$]{} ;
\draw (2,1) node[circle, fill, blue, inner sep = 0pt, minimum size = 4pt, label=right:$3$]{} ;
\draw (1,2) node[circle, fill, blue, inner sep = 0pt, minimum size = 4pt, label=above:$4$]{} ;
\draw (0,1) node[circle, fill, blue, inner sep = 0pt, minimum size = 4pt, label=left:$5$]{} ;
\end{tikzpicture}
\end{minipage} \\

The {\em realization space} of (abstract) $P$ is the set of all (concrete) polytopes which are combinatorially equivalent to $P$.

In the classical model for the realization space of $P$ we identify each realization of $P$ with the set of its vertices in $d$-dimensional space, i.e.,
\[
\mathcal{R}(P) := \left\{\mat{Q}=\begin{bmatrix} \qq_1^\top \\ \vdots \\\qq_v^\top\end{bmatrix}\in\RR^{v\times d} : Q=\conv\{\qq_1,\ldots,\qq_v\} \text{ is a realization of } P\right\}.
\]
One often wishes to consider realizations equivalent if they are related via some simple transformation.

Usually (see \cite{RG96}) one would fix an {\em affine basis}, that is, $d+1$ necessarily affinely independent vertices of $P$, whose coordinates would remain fixed in all realizations in $\mathcal{R}(P)$, thus modding out affine transformations. This gives an explicit way to model the set of affine equivalence classes of realizations of $P$. 

This realization space model naturally generalizes to polyhedral cones.
Let $K$ be an abstract labeled $(d+1)$-polyhedral cone with $v$  extreme rays  and $f$ facets. A realization of~$K$ is an assignment of coordinates to each extreme ray, $i\mapsto \rr_i\in\RR^{d+1}$. Recording a cone by the generators of its extreme rays, we get the following realization space model
\[
\mathcal{R}(K) = \left\{\mat{R}=\begin{bmatrix}\rr_1^\top \\ \vdots \\ \rr_v^\top\end{bmatrix} \in\RR^{v\times (d+1)} : K' = \cone\{\rr_1,\ldots, \rr_v\} \text{ is a realization of }K\right\}.
\]

Unlike for polytopes, it is possible for different elements of $\mathcal{R}(K)$ to represent the same realization. In particular
\[
\cone\{\lambda_1\rr_1,\ldots,\lambda_v\rr_v\} =  \cone\{\rr_1,\ldots,\rr_v\}
\]
for any choice of $\lambda_1,\ldots,\lambda_v\in\RR_{>0}.$

If $K$ has the same combinatorial type as polytope $P$, we write $K=P^h$. One way to obtain a realization of $P^h$ is to take $K'=Q^h$, the homogenization cone of a realization $\mat{Q}$ of $P$; that is,
\[
\mat{R} = \begin{bmatrix} \mathbbm{1} & \mat{Q} \end{bmatrix} \in \mathcal{R}(P^h) \quad \mbox{whenever }\;\mat{Q}\in\mathcal{R}(P),
\]
where $\mathbbm{1}$ is the all-ones vector.


%
%


\section{The Grassmannian model} \label{sec:grassmannian}

The relationship between the Grassmannian and the classical model of realization space was already noted by Richter-Gebert \cite[p. 21]{RG96}. Here we formalize the definition of the Grassmannian realization space.

We denote the \textit{Grassmannian} by $Gr(d+1,v)$ which is the set of all ${(d+1)}$-dimensional linear subspaces of a fixed $v$-dimensional vector space. In particular, if we consider subspaces of $\RR^v$, then a point of the Grassmannian can be described as the column space of a $v\times (d+1)$ matrix $\mat{X}$; that is, there is a surjective map
\begin{align*}
\rho:  \RR^{v\times (d+1)} & \to Gr(d+1,v) \\
	\mat{X} & \mapsto \textup{column space}(\mat{X}).
\end{align*}
However, two matrices $\mat{X},\mat{Y}$ can have the same column space, which happens exactly if they differ by an element of $GL_{d+1}(\RR)$,
\begin{equation}
\rho(\mat{X}) = \rho(\mat{Y}) \quad\Leftrightarrow\quad \mat{Y} = \mat{XA}, \mbox{ for some } \mat{A}\in GL_{d+1}(\RR).
\label{EQ:colspaceequal}
\end{equation}

Instead of recording points of the Grassmannian by some (non-unique choice of) matrix of basis elements, we will record each subspace by its {Pl\"ucker vector} which is defined as follows.

For a matrix $\mat{X}\in\RR^{v\times (d+1)}$, denote by $\mat{X}_J$ the submatrix of $\mat{X}$ formed by taking the rows indexed by $J\subset[v] = \{1,2,\dots v\}$.
\begin{definition} For a $(d+1)$-subset $J = \{j_0,\ldots, j_d\}$, with each $j_k\in[v]$, the {\em Pl\"ucker coordinate} of $\mat{X}$ indexed by $J$ is $\det(\mat{X}_J)$.  Notice if~$J$ contains repeated elements, the Pl\"ucker coordinate is necessarily zero, and Pl\"ucker coordinates given by rearranging elements of~$J$ are related by the sign of the appropriate permutation. Hence we record only the Pl\"ucker coordinates indexed by sets $J =\{j_0<j_1<\ldots<j_d\} \in \binom{[v]}{d+1}$ in the {\em Pl\"ucker vector} of $\mat{X}$,
\[
pl(\mat{X}) := \big(\det[\mat{X}_J]\big)_{J\subset\binom{[v]}{d+1}}.
\]
\end{definition}

Now if $\mat{X}\in\RR^{v\times (d+1)}$ is any matrix with $\rho(\mat{X}) = \LL$, then the following map is a well-known embedding of the Grassmannian into projective space
\begin{align*}
pl: Gr(d+1,v) &\to \RR\PP^{\binom{v}{d+1}-1} \\
	\LL &\mapsto pl(\mat{X}).
\end{align*}
We call $pl(\mat{X})$ {\em the Pl\"ucker vector} of $\LL$. Notice that this is a well-defined by \eqref{EQ:colspaceequal}, since $pl(\mat{XA}) = pl(\mat{X})\cdot\det(\mat{A})$ so that matrices related by elements of $GL_{d+1}(\RR)$ map to the same point in projective space under $pl$.

The Grassmannian $Gr(d+1,v)$ is cut out as a subvariety of projective space by the {\em Pl\"ucker ideal}, $I_{d+1,v}$ (see, for instance, \cite[Section 2.2]{MS15}). This is an ideal of the polynomial ring in {\em Pl\"ucker variables}, $I_{d+1,v}\subset \RR[\pp] : = \RR[p_{i_0\cdots i_d}: 1\leq i_0< \cdots < i_d\leq v]$, where we assume the variables are listed in colexicographic\footnote{This order is defined by $i_0\cdots i_d <_{\textup{colex}} j_0\cdots j_d$ if $i_k<j_k$ for the largest index $k$ with $i_k\neq j_k$.} order unless otherwise stated. This ideal consists of all polynomials which vanish on every vector of $(d+1)$-minors coming from an arbitrary  $v\times (d+1)$ matrix.

\begin{example}
The column space of the following matrix $\mat{X}$ is a point in $Gr(3,5)$. One can check that the generators of the Pl\"ucker ideal $I_{3,5}$ vanish on its Pl\"ucker vector.
\[
\mat{X} = \begin{bmatrix} 1 & 0 & 0 \\ 1 & 1 & 0 \\ 1 & 2 & 1 \\ 1 & 1 & 2 \\ 1 & 0 & 1 \end{bmatrix}, \quad
\begin{array}{c}
\begin{array}{rccccccccccc}
\scriptstyle i_0i_1i_2 & & \scriptstyle123 &\scriptstyle124 & \scriptstyle134 & \scriptstyle234 & \scriptstyle125 & \scriptstyle135 & \scriptstyle235 & \scriptstyle145 & \scriptstyle245 & \scriptstyle345 \\
pl(\mat{X}) &\!\!\!\!=\!\!\! &[1:&2:&3:&2:&1:&2:&2:&1:&2:&2]
\end{array}\\[5pt]
I_{3,5} = \left\langle \begin{array}{c} p_{235}p_{145}-p_{135}p_{245}+p_{125}p_{345},\\
p_{234}p_{145}-p_{134}p_{245}+p_{124}p_{345},\\
p_{234}p_{135}-p_{134}p_{235}+p_{123}p_{345},\\
p_{234}p_{125}-p_{124}p_{235}+p_{123}p_{245},\\
p_{134}p_{125}-p_{124}p_{135}+p_{123}p_{145} \end{array} \right\rangle.
\end{array}
\]
\label{EX:pentagonGrass}\end{example}

Notice that the rows of the matrix in the above example generate a cone over a pentagon. In general, a matrix whose $v$ rows generate the cone over a $d$-polytope will have rank $d+1$ and hence define a point in $Gr(d+1,v)$.
Thus if $K$ is a $(d+1)$-dimensional polyhedral cone with $v$ (labeled) extreme rays, we can identify each matrix of generators $\mat{R}\in\mathcal{R}(K)$ with the corresponding point in the Grassmannian. We now consider the structure of this resulting realization space of (linear equivalence classes of) a polyhedral cone inside $Gr(d+1,v)$ by looking at how the combinatorics of $K$ impose conditions on the rays of a realization.

It suffices to consider the extreme ray-facet incidences of $K$.
Notice that by convexity, a set $J$ of $d$ linearly independent extreme rays spans a facet of $K$ if and only if every extreme ray of $K$ is in the same (closed) half-space determined by the hyperplane through rays of $J$. That is, $J$ defines a facet of $K$  if and only if for some normal $\aalpha_J$ to the hyperplane containing rays $J$, we have $\langle \aalpha_J, \rr_j\rangle \geq 0$ for all extreme rays $j\in[v]$.
Furthermore, the extreme rays contained in that facet are exactly those $j$ for which $\langle \aalpha_J, \rr_j\rangle = 0$.
We can use basic linear algebra to determine~$\aalpha_J$ via the following linear functional

\begin{equation}
\langle \aalpha_J, \xx\rangle := \det[\,\rr_{j_1} \,|\, \cdots \,|\, \rr_{j_d} \,|\, \xx\,], \quad \xx\in\RR^{d+1},
\label{EQ:facetnormal}
\end{equation}
which vanishes exactly on the $d$-dimensional subspace spanned by ray generators $\{\rr_j\}_{j\in J}$.

This fact tells us that the above conditions defining facets of $K$ depend only on determinants of sets of $d+1$ extreme rays. Thus we can encode the combinatorial information about $K$ as conditions on the Pl\"ucker vector of a point in the Grassmannian.

\begin{definition} Call a set $J\subset[v]$ a {\em facet extension} if it contains $d$ elements which span a facet of~$K$ and a single element not on that facet. Denote the set of facet extensions of $K$ by $\overline{\mathcal{F}}(K) \subset\binom{[v]}{d+1}.$ 
\end{definition}

Then the elements of the Grassmannian corresponding to realizations of $K$ are
\[
Gr_+(K) := \left\{ \begin{array}{cc}
\parbox[c][][c]{2.7cm}{$\LL\in Gr(d+1,v)$\ :} &  \parbox[c][][c]{5cm}{$pl(\LL)_J = 0$ if rays $J$ lie in a facet,\\ $\Delta_Jpl(\LL)_{J} > 0$ if $J\in\overline{\mathcal{F}}(K)$} \end{array}\right\},
\]
where $\Delta_J = \pm 1$ is a sign which depends on the orientation of simplex $J$.

\begin{example} Consider again the example of the cone $K$ over the pentagon with facets $\{1,2\}$, $\{2,3\}$, $\{3,4\}$, $\{4,5\}$, and $\{5,1\}$. Since every set of $3$ indices necessarily contains $2$ consecutive indices, every $J\subset \binom{[5]}{3}$ is a facet extension. Thus
\[
Gr_+(K) := \left\{ \begin{array}{rl}
\LL\in Gr(d+1,v) : & \Delta_Jpl(\LL)_{J} > 0 \text{ for all $J$} \end{array}\right\}.
\]
Notice that the choice of labeling for our polytope will affect the sign factors $\Delta_J$. To see this, consider the following two pentagons, $Q,Q'$, whose homogenization cones have realizations~$\mat{R}, \mat{R}'$, respectively. For $Q$, we see that simplices $123$ and $134$ are both positively oriented. However, in $Q'$, $123$ is negatively oriented, while $134$ is positively oriented.  \\
\begin{center}
\hspace{-35pt}\begin{minipage}{0.35\textwidth}
\begin{tikzpicture}
\draw (-1.7,1.3) node[] {$\mat{R} = \left[\begin{array}{@{\,}c@{\;}c@{\;}c@{\,}} 1&0&0 \\ 1&1&0 \\ 1&2&1 \\ 1&1&2 \\ 1&0&1 \end{array}\right]$};
\draw[<->, thick] (0,-1.0) -- (0,2.8);
\draw[<->, thick] (2.8,0) -- (-0.8,0);
\draw[fill=blue!25] (0,0) -- (1,0) -- (2,1) -- (1,2) -- (0,1) -- (0,0);
\draw (0,0) node[circle, fill, blue, inner sep = 0pt, minimum size = 3pt, label=-135:$1$]{} ;
\draw (1,0) node[circle, fill, blue, inner sep = 0pt, minimum size = 3pt, label=below:$2$]{} ;
\draw (2,1) node[circle, fill, blue, inner sep = 0pt, minimum size = 3pt, label=right:$3$]{} ;
\draw (1,2) node[circle, fill, blue, inner sep = 0pt, minimum size = 3pt, label=above:$4$]{} ;
\draw (0,1) node[circle, fill, blue, inner sep = 0pt, minimum size = 3pt, label=left:$5$]{} ;
\draw[red, dashed, very thick] (0,0) -- (1,0) -- (2,1) -- (0,0);
\draw[red, thick, ->] (1,0.5) -- (0.98, 0.49);
\draw[red, thick, ->] (0.5,0) -- (0.52,0);
\draw[red, thick, ->] (1.49, 0.5) -- (1.5, 0.51);
\draw (0.4,3) node[] {$Q = \{12,23,34,45,51\}$};
\end{tikzpicture}
\end{minipage}\hspace{55pt}\begin{minipage}{0.35\textwidth}
\begin{tikzpicture}
\draw (-1.7,1.3) node[] {$\mat{R}' = \left[\begin{array}{@{\,}c@{\;}c@{\;}c@{\,}} 1&0&0 \\ 1&1&0 \\ 1&2&1 \\ 1&1&2 \\ 1&0&1 \end{array}\right]$};
\draw[<->, thick] (0,-1.0) -- (0,2.8);
\draw[<->, thick] (2.8,0) -- (-0.8,0);
\draw[fill=blue!25] (0,0) -- (1,0) -- (2,1) -- (1,2) -- (0,1) -- (0,0);
\draw (0,0) node[circle, fill, blue, inner sep = 0pt, minimum size = 3pt, label=-135:$1$]{} ;
\draw (1,0) node[circle, fill, blue, inner sep = 0pt, minimum size = 3pt, label=below:$3$]{} ;
\draw (2,1) node[circle, fill, blue, inner sep = 0pt, minimum size = 3pt, label=right:$5$]{} ;
\draw (1,2) node[circle, fill, blue, inner sep = 0pt, minimum size = 3pt, label=above:$2$]{} ;
\draw (0,1) node[circle, fill, blue, inner sep = 0pt, minimum size = 3pt, label=left:$4$]{} ;
\draw (0.4,3) node[] {$Q' = \{13,35,52,24,41\}$};
\draw[red, dashed, very thick] (0,0) -- (1,2) -- (1,0) -- (0,0);
\draw[red, thick, ->] (0.5,1) -- (0.51,1.02);
\draw[red, thick, ->] (1,0.75) -- (1,0.73);
\draw[red, thick, ->] (0.5,0) -- (0.48,0);
\end{tikzpicture}
\end{minipage}
\end{center}
\[
{\renewcommand{\arraystretch}{0.75}
\begin{array}{rcl@{\hspace{25pt}}rcl}
pl(\mat{R})_{123} & = \det\begin{bmatrix} 1& 0 & 0 \\ 1 & 1& 0 \\ 1 & 2 & 1 \end{bmatrix} & = 1
& pl(\mat{R}')_{123} & = \det\begin{bmatrix} 1& 0 & 0 \\ 1 & 1& 2 \\ 1 & 1 & 0 \end{bmatrix} & = -2 \\[25pt]
pl(\mat{R})_{134} & = \det\begin{bmatrix} 1& 0 & 0 \\ 1 & 2& 1 \\ 1 & 1 & 2 \end{bmatrix} &= 3
& pl(\mat{R}')_{134} &= \det\begin{bmatrix} 1& 0 & 0 \\ 1 & 1& 0 \\ 1 & 0 & 1 \end{bmatrix} &= 1.
\end{array}}
\]
\label{EX:simplexorient}
\end{example}

\begin{remark}
We note in the above example, that in the labeling of $Q$, the maximal minors of the matrix $\mat{R}$ (whose rows  are generators of the homogenization cone of some realization of $Q$) are all positive. Thus, this $5\times 3$ matrix defines a {\em $(3,5)$-positroid} \cite{P06}. It is possible, as we also see in Example~\ref{EX:simplexorient}, that a different labeling of the same combinatorial type has  matrices $\mat{R}'\in\RR^{n\times k}$ which represent realizations, but which do not correspond to $(k,n)$-positroids. Polytopes that are known to have such labelings include simplices, the square, the pentagon, the triangular prism, the bisimplex, and cyclic polytopes \cite[p. 821]{ARW17}.
\end{remark}

Now we show that $\mathcal{R}(K)$ and $Gr_+(K)$ contain the same information about realizations of $K$ in a very precise sense (see Figure~\ref{FIG:relatemodelsmaster}).

\begin{definition}
Two realizations $K_1,K_2 \subset\RR^{d+1}$ of a $(d+1)$-cone $K$ are called {\em linearly equivalent} if they are related via a linear transformation; that is, $K_2 =\mat{A}K_1$, for some $\mat{A}\in GL_{d+1}(\RR)$.
\label{DEF:coneequiv}
\end{definition}

\begin{theorem}
There is a one-to-one correspondence between points in $Gr_+(K)$ and linear equivalence classes of sets of extreme rays that generate $K$.
\label{THM:raysGrassequiv}
\end{theorem}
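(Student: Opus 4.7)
The plan is to show that the column-space map $\rho$ restricts to a surjection $\mathcal{R}(K) \to Gr_+(K)$ whose fibers are precisely the linear equivalence classes of realizations; this immediately yields the stated bijection.

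First, I would verify that $\rho(\mat{R}) \in Gr_+(K)$ for every $\mat{R} \in \mathcal{R}(K)$. Writing the rows of $\mat{R}$ as $\rr_1, \ldots, \rr_v$, and using that $pl(\rho(\mat{R}))_J$ is represented by $\det(\mat{R}_J)$ for each $J \in \binom{[v]}{d+1}$, the key identity is~\eqref{EQ:facetnormal}: for any $d$-subset $J_0 = \{j_1, \ldots, j_d\}$ spanning a facet $F$ of $K$, the outward normal $\aalpha_{J_0}$ satisfies $\langle \aalpha_{J_0}, \rr_k \rangle = \det[\rr_{j_1} | \cdots | \rr_{j_d} | \rr_k]$, which coincides (up to the sign coming from reordering) with $\det(\mat{R}_{J_0 \cup \{k\}})$. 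When $k \in F$ this vanishes, forcing $pl(\rho(\mat{R}))_J = 0$ for every $(d+1)$-subset $J \subseteq F$; when $k \notin F$ it is strictly positive, and the sign $\Delta_J$ is chosen precisely to absorb the ordering discrepancy so that $\Delta_J\, pl(\rho(\mat{R}))_J > 0$.

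For surjectivity, given $\LL \in Gr_+(K)$ I would pick any matrix $\mat{R}$ with $\rho(\mat{R}) = \LL$ and show that $K' := \cone\{\rr_1, \ldots, \rr_v\}$ is combinatorially equivalent to $K$. For each combinatorial facet $F$ of $K$, the vanishing of all Plücker coordinates indexed by $(d+1)$-subsets of $F$ forces $\dim\mathrm{span}\{\rr_j : j \in F\} \le d$, while the non-vanishing of at least one facet-extension coordinate $pl(\LL)_{J_0 \cup \{k\}}$ with $J_0 \subset F$ and $k \notin F$ forces equality. The sign condition then gives $\langle \aalpha_F, \rr_k \rangle > 0$ for every $k \notin F$, where $\aalpha_F$ is the normal computed from any spanning $d$-subset $J_0 \subset F$. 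Hence $\{\langle \aalpha_F, \cdot \rangle = 0\}$ is a supporting hyperplane of $K'$ meeting the generators exactly in $\{\rr_j : j \in F\}$, so $F$ appears as a geometric facet of $K'$. A routine combinatorial argument, using that the intersection of all facets through any extreme ray of $K$ is that ray alone, then shows each $\rr_i$ is extreme in $K'$.

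The fiber computation is immediate from~\eqref{EQ:colspaceequal}: two matrices have the same column space if and only if they differ by right multiplication by some $\mat{A} \in GL_{d+1}(\RR)$, which on the rows reads $\rr_i^{(2)} = \mat{A}^\top \rr_i^{(1)}$, precisely linear equivalence in the sense of Definition~\ref{DEF:coneequiv}. The main obstacle is the surjectivity step: one has to check that the ``local'' Plücker conditions defining $Gr_+(K)$ — imposed only on facet subsets and facet extensions — suffice to force combinatorial equivalence and rule out spurious extra facets of $K'$. This is handled by the fact that a polyhedral cone is determined up to combinatorial type by its ray-facet incidence lattice, so once every combinatorial facet of $K$ is realized as a geometric facet of $K'$ with the correct incidence structure, the two cones must be combinatorially equivalent.
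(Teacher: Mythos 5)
Your proposal is correct and takes essentially the same approach as the paper: show $\rho$ maps $\mathcal{R}(K)$ into $Gr_+(K)$, show its fibers are exactly the $GL_{d+1}(\RR)$-orbits via \eqref{EQ:colspaceequal}, and establish surjectivity by checking that any matrix $\mat{X}_\LL$ representing $\LL\in Gr_+(K)$ has rows generating a cone of the right combinatorial type, using the facet/facet-extension Plücker conditions. The one spot you flag --- ruling out spurious extra facets of $K'$ --- is glossed over at the same level in the paper's proof, which also stops at ``Thus the rows of $\mat{X}_\LL$ generate a cone having the desired combinatorics''; your suggestion that the vertex-facet incidence lattice determines the face lattice is indeed the right principle to invoke.
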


\begin{proof} We have already seen that if $\mat{R}\in\mathcal{R}(K)$, then the Pl\"ucker vector of its column space satisfies the Pl\"ucker conditions defining $Gr_+(K)$. Furthermore, if $\mat{R},\mat{Q}\in\mathcal{R}(K)$ are linearly equivalent with $\mat{Q} = \mat{R} \mat{A}$ for some $\mat{A}\in GL_{d+1}(\RR)$, then $pl(\mat{Q}) = \det(\mat{A})\cdot pl(\mat{R})$; in other words, they are the same when considered as points in the Grassmannian via the map $\rho$.

Conversely, given a point  $\LL\in Gr_+(K)$, there exists a matrix $\mat{X}_\LL\in\RR^{v\times (d+1)}$ with that column space. We claim that the map
\begin{eqnarray*}
\GRRmap: Gr_+(K) & \to & \mathcal{R}(K) /GL_{d+1}(\RR) \\
\LL & \mapsto & \mat{X}_\LL
\end{eqnarray*}
is inverse to $\rho$, giving the desired one-to-one correspondence. In fact, it is clear that this is the desired inverse, as long as we show that it is well-defined.  For this, we need to show that the rows of $\mat{X}_\LL$ form a generating set for a cone in the combinatorial class of $K$. To see this,  notice that each facet $F$ of $K$ is spanned by some set of $d$ rays $j_1,\ldots,j_d$. Every other ray $j_0\in[v]$ is either in that facet of $K$, or forms a facet extension with $j_1,\ldots, j_d$. If $j_0\in F$, then by definition of $Gr_+(K)$, rows $j_0,j_1,\ldots, j_d$ of $\mat{X}_\LL$ are linearly dependent. If $j_0\notin F$, then again by definition of $Gr_+(K)$, row $j_0$ of $\mat{X}_\LL$ is in the positive half-space of the hyperplane defined by the rows $j_1,\ldots, j_d$. Thus the rows of $\mat{X}_\LL$ generate a cone having the desired combinatorics.
Finally, any other choice of matrix with $\rho(\mat{Y}) = \LL$ must be of the form $\mat{Y}=\mat{XA}$ for some $\mat{A}\in GL_{d+1}(\RR)$ by \eqref{EQ:colspaceequal}, giving the correspondence up to linear equivalence, as desired.
 \end{proof}

 \begin{remark} Notice that even though rays $\{\rr_1,\ldots, \rr_v\}$ and $\{\lambda_1\rr_1,\ldots,\lambda_v\rr_v\}$ generate the {\em same} cone, they are not necessarily linearly equivalent for $\lambda_1,\ldots, \lambda_v$ not all equal. This is reflected in their Pl\"ucker coordinates, as
\[
pl(\textup{diag}(\lambda_1,\ldots,\lambda_v)\mat{R})_{j_0\cdots j_d} = pl(\mat{R})_{j_0\cdots j_d}\cdot\prod_{i=0}^{d} \lambda_{j_i}.
\]
Thus if there is some $i\neq k$ with $\lambda_i\neq \lambda_k$, we see that $pl(\mat{R})_{ij_1\cdots j_d}$ and $pl(\mat{R})_{kj_1\cdots j_d}$ for $i,k\notin\{j_1,\ldots, j_d\}$ will be scaled differently, resulting in a different point in $Gr_+(K)$ under the map~$\rho$.
\end{remark}

\begin{example} Let $K$ be a cone over a square, with facets $\{12,23,34,14\}$. Then since every set of $3$ rays forms a facet extension, similarly to the pentagon, we get
\[
Gr_+(K) = \{\LL\in Gr(3,4) : pl(\LL) >0\} = \{[w:x:y:z] \in \PP^3 : w,x,y,z > 0\},
\]
where the last equality comes from the fact that there are no Pl\"ucker relations for $d+1=3,v=4$. From the proof of Theorem~\ref{THM:raysGrassequiv}, we know that to obtain a realization of $K$ from a point $[w:x:y:z]\in Gr_+(K)$ we want a matrix $\mat{X}$ such that $pl(\mat{X}) = [w:x:y:z]$.

To mod out by linear transformations, we will fix a choice of $d+1 = 3$ extreme rays in $\mat{X}_\LL$ for every~$\LL$ in advance. Notice that we can fix $d$ arbitrary linearly independent vectors to be a spanning set for some facet, then the remaining vectors are determined by the Pl\"ucker coordinates we wish to impose. So we can choose for example $1\mapsto (0,0,1)^\top$, and $2\mapsto (1,0,1)^\top$. In addition, we can choose a final linear subspace, independent from the first~$d$, for an extreme ray which forms a facet extension with those $d$ indices. In this case, we choose ray $4$ to be the span of vector $(0,1,1)^\top$ and the actual generator, $(0,x,x)^\top$, is determined by the scaling of our Pl\"ucker vector.
Then a representative of the linear equivalence class of generating rays corresponding to each element $[w:x:y:z]$ of $Gr_+(K)$ is, for example, \\
\begin{minipage}{0.5\textwidth}
\[
\mat{X}_\LL = \begin{bmatrix} 0 & 0 & 1 \\ 1 & 0 & 1 \\ y/x & w & (w+y/x)-z/x  \\ 0 & x & x \end{bmatrix}.
\]
\end{minipage}
\begin{minipage}{0.5\textwidth}
\begin{tikzpicture}
\draw[<->] (-2,0) -- (2.6,0);
\draw[<->] (0,-0.8) -- (0,3);
\draw[<->] (-1,-0.5) -- (2,1);
\draw (0,1) node[circle, fill, blue, inner sep = 0pt, minimum size =3pt, label=135:$1$] {};
\draw (-0.7, 0.65) node[circle, fill, blue, inner sep = 0pt, minimum size =3pt, label=200:$2$] {};
\draw (1.5,1.5) node[circle, fill, blue, inner sep = 0pt, minimum size =3pt, label=above:$4$] {};
\draw (0.4,1.2) node[circle, fill, blue, inner sep = 0pt, minimum size =3pt, label=right:$3$] {};
\draw[->, thick, blue] (0,0) -- (0,2.5);
\draw[->, thick, blue] (0,0) -- (-1.4,1.3);
\draw[->, thick, blue] (0,0) -- (2,2);
\draw[->, thick, blue] (0,0) -- (0.8,2.4);
\draw[dashed] (0,1) -- (-0.7,0.65) -- (0.2167,0.65) -- (1,1) -- (0,1);
\end{tikzpicture}
\end{minipage}
\label{EX:coneoversquare}
\end{example}

\begin{remark} Notice that, if $P$ is such that $K=P^h$, Theorem~\ref{THM:raysGrassequiv} does not give us a one-to-one correspondence between points of $Gr_+(K)$ and equivalence classes of realizations of $P$. This is because a linear equivalence class of cone generators need not live in a single affine hyperplane and hence will not correspond to an actual realization of the polytope $P$.
\end{remark}

\begin{example} Continuing Example~\ref{EX:coneoversquare}, we see that the rows of $\mat{X}_\LL$ will give us a realization of a square if and only if they live in some hyperplane $\{\xx\in\RR^3 : \aalpha^\top\xx = \gamma\}$ for some $\aalpha\in\RR^3$ and $\gamma \in \RR$. Solving the equation $\mat{X}_\LL\aalpha = \gamma$, tells us that this only occurs for $[w:x:y:z]$ satisfying $w+y-z=x$. \label{EX:squaregenerators}
\end{example}

It is easy to see that in the above example we may assume $\gamma=1$, so that we only get a square when the all-ones vector $\mathbbm{1}$ is in the column space of $\mat{X}_\LL$. In fact, this is true in general.

\begin{corollary} For $K=P^h$, there is a one-to-one correspondence between points of $Gr_+(K)$ which contain $\mathbbm{1}$ and affine equivalence classes of  realizations in $\mathcal{R}(P)$.
\label{COR:polytopeGrassequiv}
\end{corollary}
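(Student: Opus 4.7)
The plan is to deduce the corollary directly from Theorem~\ref{THM:raysGrassequiv} by restricting the bijection there to those linear equivalence classes whose column space contains $\mathbbm{1}$, and then showing that on this subset the relation ``linearly equivalent cone realizations'' agrees precisely with ``affinely equivalent polytope realizations (after homogenization).''

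First I would define the two maps explicitly. In one direction, given an affine equivalence class $[\mat{Q}]$ in $\mathcal{R}(P)$, the homogenization $\mat{R} = [\mathbbm{1} \,|\, \mat{Q}] \in \mathcal{R}(P^h) = \mathcal{R}(K)$ has column space $\LL = \rho(\mat{R}) \in Gr_+(K)$ by Theorem~\ref{THM:raysGrassequiv}, and by construction $\mathbbm{1} \in \LL$. In the other direction, given $\LL \in Gr_+(K)$ with $\mathbbm{1}\in \LL$, I would choose a basis of $\LL$ whose first vector is $\mathbbm{1}$; the resulting matrix $\mat{X}_\LL = [\mathbbm{1}\,|\,\mat{Q}]$ then has $\mat{Q} \in \mathcal{R}(P)$ because its rows, all lying in the affine hyperplane $\{x_0 = 1\}$, generate a polytope combinatorially equivalent to $P$ by Theorem~\ref{THM:raysGrassequiv} applied to $K=P^h$.

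Next I would verify the two maps are mutually inverse once one passes to equivalence classes. The key computation is to identify the stabilizer, in $GL_{d+1}(\RR)$, of the condition ``first column equals $\mathbbm{1}$.'' A matrix $\mat{A}\in GL_{d+1}(\RR)$ satisfies $[\mathbbm{1}\,|\,\mat{Q}]\mat{A} = [\mathbbm{1}\,|\,\mat{Q}']$ if and only if
\[
\mat{A} = \begin{bmatrix} 1 & \bb^\top \\ \mathbf{0} & \mat{C} \end{bmatrix}
\]
for some $\bb \in \RR^d$ and $\mat{C} \in GL_d(\RR)$; in that case $\mat{Q}' = \mathbbm{1}\bb^\top + \mat{Q}\mat{C}$, which is exactly an affine transformation of $\mat{Q}$. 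Hence two realizations $\mat{Q}, \mat{Q}' \in \mathcal{R}(P)$ yield the same point of $Gr_+(K)$ containing $\mathbbm{1}$ if and only if they are affinely equivalent.

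Finally I would check well-definedness in the remaining direction: if $\LL \in Gr_+(K)$ contains $\mathbbm{1}$, any two choices of matrix $\mat{X}_\LL = [\mathbbm{1}\,|\,\mat{Q}]$ and $\mat{X}'_\LL = [\mathbbm{1}\,|\,\mat{Q}']$ for $\LL$ differ by an element of $GL_{d+1}(\RR)$ by \eqref{EQ:colspaceequal}, and since both matrices share $\mathbbm{1}$ as the first column, this element has the block form above, so $\mat{Q}$ and $\mat{Q}'$ are affinely equivalent. The only step requiring real care is confirming that one can always select a basis for $\LL$ starting with $\mathbbm{1}$ (immediate once $\mathbbm{1}\in \LL$, since $\mathbbm{1}\neq 0$) and extending to a basis by rows of some $\mat{X}_\LL \in \rho^{-1}(\LL)$; this is routine linear algebra. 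Combined with Theorem~\ref{THM:raysGrassequiv}, these observations give the claimed one-to-one correspondence.
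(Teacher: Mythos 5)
Your proof is correct and takes essentially the same approach as the paper: restrict the bijection of Theorem~\ref{THM:raysGrassequiv} to subspaces $\LL$ containing $\mathbbm{1}$ and to matrices with $\mathbbm{1}$ in their column space, observing that the latter condition is equivalent to the rows lying in a common affine hyperplane. You additionally spell out the stabilizer computation identifying the block upper-triangular matrices
\[
\mat{A} = \begin{bmatrix} 1 & \bb^\top \\ \mathbf{0} & \mat{C} \end{bmatrix}
\]
that preserve the normalization $[\mathbbm{1}\,|\,\mat{Q}]$ with affine transformations of $\mat{Q}$, a verification the paper treats as implicit.
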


\begin{proof} A realization $\mat{R}$ of $K$, gives a realization of $P$ if and only if the rows of~$\mat{R}$ live in a single affine hyperplane of $\RR^{d+1}$; that is, there exists $\aalpha\in\RR^{d+1}$ so that $\mat{R}\aalpha = \mathbbm{1}$. Since the corresponding element of $Gr_+(K)$ is the column space of the matrix $\mat{R}\in\mathcal{R}(K)$, the result follows from Theorem~\ref{THM:raysGrassequiv} by restricting maps~$\rho$ and $\GRRmap$ to matrices with~$\mathbbm{1}$ in the column space and subspaces $\LL$ containing~$\mathbbm{1}$, respectively.
\end{proof}

\subsection{The Grassmannian of a cone}

Notice that the realization space $Gr_+(K)$ defines a semialgebraic subset of the Grassmannian, but can be relaxed to a subvariety, namely the Zariski closure of the following~set.
\begin{definition}
Let the {\em Grassmannian of $K$} be given by relaxing the inequalities in the above realization space to get
\[
Gr(K) := \left\{ \begin{array}{cc}
\parbox[c][][c]{2.8cm}{$\LL\in Gr(d+1,v)$\ :} &  \parbox[c][][c]{3.7cm}{$pl(\LL)_J = 0 \text{ if } J \in \mathcal{F}(K)$\\ $pl(\LL)_{J} \neq 0 \text{ if } J \in \overline{\mathcal{F}}(K)$} \end{array}
\right\},
\]
where we denote by $\mathcal{F}(K)$ the sets $J$ of $d+1$ extreme rays of $K$ such that each~$J$ is contained in some facet of $K$.
\label{DEF:GrassK}
\end{definition}

\begin{example}
Notice that the Grassmannian of the cone over the pentagon only requires all coordinates to be nonzero; it contains no equations and so is full-dimensional in $Gr(3,5)$. Thus its Zariski closure is simply the whole Grassmannian $Gr(3,5)$.
\end{example}

It is not hard to obtain the ideal of this subvariety from the ideal of the whole Grassmannian, the Pl\"ucker ideal $I_{d+1,v}$. From the definition of $Gr(K)$, we have
\[
Gr(K) = \bigg( \VV(I_{d+1,v}) \cap \VV\big(\langle \pp_J : J\in\mathcal{F}(K)\rangle\big)\bigg) \backslash \VV\big(\langle \pp_J : J\in\overline{\mathcal{F}}(K)\rangle\big).
\]
Hence, its ideal (see \cite{CLO15} for necessary correspondences) is given by
\begin{equation}
\mathcal{I}(Gr(K)) = \big(I_{d+1,v} + \langle \pp_J : J\in\mathcal{F}(K)\rangle\big) : \left(\prod_{J\in\overline{\mathcal{F}}(K)}\pp_J\right)^\infty \subset \RR[\pp].
\label{EQ:GrKideal}\end{equation}

\begin{remark}
It is not hard to see from the proof of Theorem~\ref{THM:raysGrassequiv} that as well as specializing to equivalence classes of actual polytope realizations, we can also generalize the map $\GRRmap$ to a map on all of $Gr(K)$, and then $\rho$ generalizes to a map $\RGRmap$ on $v\times(d+1)$ matrices having ``prescribed combinatorics'' up to $GL_{d+1}(\RR)$-action (see Figure~\ref{FIG:relatemodelsmaster}).
\end{remark}

\begin{corollary} There is a one-to-one correspondence between elements of $Gr(K)$ and full rank elements of $\RR^{v\times(d+1)}/GL_{d+1}(\RR)$ whose rows satisfy the\break dependence/independence relations imposed by the combinatorics of $K$. \label{COR:matrixGrassequiv}
\end{corollary}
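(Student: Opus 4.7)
The plan is to mirror the proof of Theorem~\ref{THM:raysGrassequiv} almost verbatim, stripping out the sign/positivity parts. Concretely, I would show that the maps $\rho$ and $\GRRmap$ extend to maps $\RGRmap$ and $\GRVmap$-like-analogue between $Gr(K)$ and the specified quotient of $\RR^{v\times(d+1)}$, and that these extended maps are mutually inverse.

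First, I would fix the target space: let $\mathcal{M}(K)$ denote the set of full-rank matrices $\mat{X}\in\RR^{v\times(d+1)}$ whose row sets $\mat{X}_J$ satisfy $\det(\mat{X}_J)=0$ for every $J\in\mathcal{F}(K)$ and $\det(\mat{X}_J)\neq 0$ for every $J\in\overline{\mathcal{F}}(K)$; this is precisely what ``dependence/independence relations imposed by the combinatorics of $K$'' encodes, since these determinantal conditions on $(d+1)$-subsets record exactly the rank-$d$/rank-$(d+1)$ data of the face structure of $K$. By \eqref{EQ:colspaceequal} and the transformation rule $pl(\mat{XA})=\det(\mat{A})\cdot pl(\mat{X})$, the conditions defining $\mathcal{M}(K)$ are $GL_{d+1}(\RR)$-invariant, so they descend to $\mathcal{M}(K)/GL_{d+1}(\RR)$.

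Next I would show that $\rho$ maps $\mathcal{M}(K)$ into $Gr(K)$: for $\mat{X}\in\mathcal{M}(K)$, the column space $\LL=\rho(\mat{X})$ is a $(d+1)$-plane (full rank), and the Pl\"ucker vector $pl(\LL)=pl(\mat{X})$ vanishes exactly on the indices $J\in\mathcal{F}(K)$ and is nonzero on $J\in\overline{\mathcal{F}}(K)$, so $\LL\in Gr(K)$ by Definition~\ref{DEF:GrassK}. Conversely, for $\LL\in Gr(K)$, pick any matrix $\mat{X}_\LL$ with column space $\LL$; I would define $\RGRmap(\LL)=\mat{X}_\LL$, then check that $\mat{X}_\LL\in\mathcal{M}(K)$. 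Full rank is automatic since $\dim\LL=d+1$. For the combinatorial conditions, note that Pl\"ucker coordinates of $\LL$ are maximal minors of $\mat{X}_\LL$ (up to a global scalar), so the vanishing/non-vanishing behavior of $pl(\LL)$ on $\mathcal{F}(K)$ and $\overline{\mathcal{F}}(K)$ translates directly into the required linear dependence/independence of the corresponding rows of $\mat{X}_\LL$.

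Finally I would verify that $\rho$ and $\RGRmap$ are mutually inverse modulo the $GL_{d+1}(\RR)$-action: the composition $\rho\circ\RGRmap$ is the identity on $Gr(K)$ by construction, and $\RGRmap\circ\rho$ is the identity on $\mathcal{M}(K)/GL_{d+1}(\RR)$ because any two matrices with the same column space differ by right multiplication by an element of $GL_{d+1}(\RR)$ via \eqref{EQ:colspaceequal}. The main subtlety, and essentially the only place anything needs to be said beyond Theorem~\ref{THM:raysGrassequiv}, is spelling out that the combinatorial conditions defining $\mathcal{M}(K)$ coincide with the ones used to define $Gr(K)$; once one observes that ``the $(d+1)$-subsets of rays that are dependent'' is exactly $\mathcal{F}(K)$ while ``$d$ rays on a facet plus one off'' is exactly $\overline{\mathcal{F}}(K)$, this is immediate. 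No genuine obstacle arises, since dropping the orientation signs $\Delta_J$ removes the only delicate part of the earlier proof.
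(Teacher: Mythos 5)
Your proof is correct and matches the paper's intended approach: the paper treats this corollary as an immediate consequence of the remark preceding it, which says to generalize the maps $\rho$ and $\GRRmap$ from Theorem~\ref{THM:raysGrassequiv} by dropping the sign/positivity conditions, and that is exactly what you carry out. (Minor slip: the generalized map is called $\RGRmap$ in the paper and its inverse is still the generalized $\GRRmap$, not a ``$\GRVmap$-like-analogue'' --- $\GRVmap$ is the later map into the slack variety --- but this does not affect the argument.)
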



%
%


\section{The slack model} \label{sec:slackmodel}

The next model represents a polytope or cone by its slack matrix, and it was introduced and studied in \cite{GMTWfirstpaper}, \cite{GMTWsecondpaper}.
The following basic facts about slack matrices of polytopes and cones are from \cite{slackmatrixpaper}, \cite{GMTWfirstpaper}.
If $Q = \conv\{\qq_1,\ldots,\qq_v\}\subset\RR^d$ is a realization of a $d$-polytope with $v$ vertices and $f$ facets whose $\mathcal{H}$-representation is given by $Q = \{\xx\in\RR^d : \mat{W}\xx \leq \ww\}$, then a slack matrix of $Q$ is the nonnegative matrix $S_Q \in \RR^{v \times f}$ whose $(i,j)$-entry is the slack of the $i$th vertex in the $j$th facet inequality: $w_j-\mat{W}_j\qq_i$.

Notice that the choice of $\mathcal{H}$-representation is not unique, and thus a realization does not have a unique slack matrix. More precisely, each realization has an infinite set of slack matrices, every pair of which are related by positive column scalings.

Similarly, if $K$ is a realization of a $(d+1)$-dimensional cone with $v$ extreme ray generators $\rr_1,\ldots, \rr_v$, and $f$ facets with $K = \{\xx\in\RR^{d+1} : \xx^\top \mat{B}\geq \mathbf{0}\}$, then a slack matrix of $K$ is
\[
S_K = \begin{bmatrix}\rr_1^\top \\ \vdots \\ \rr_v^\top \end{bmatrix} \mat{B} \in\RR^{v\times f}.
\]
Notice that if $K =Q^h$ is the homogenization cone of $Q$, that is, $K =\cone\{(1,\qq_1),\ldots,\break (1,\qq_v)\}
= \{(x_0,\xx) \in \RR\times\RR^{d} :  \mat{W}\xx\leq x_0\ww\}$, then $S_K = S_Q$.

Like polytopes, cones do not have unique slack matrices. In particular, recall that the vectors $\gamma_1\rr_1,\ldots, \gamma_v\rr_v$ also generate the cone $K$ for any $\gamma_1,\ldots, \gamma_v\in\RR_{>0}$. More precisely, slack matrices of cones are defined up to positive row and column scaling.

Let us consider two realizations of an abstract polytope. By \cite[Corollary~1.5]{GPRT17}, they are affinely equivalent if and only if they have the same set of slack matrices; they are projectively equivalent if and only if they have the same set of slack matrices up to row scaling. A similar result holds for cones: two realizations of an abstract polyhedral cone are linearly equivalent if and only if they have the same set of slack matrices.

\begin{example}
Recall our first realization of a pentagon from Example~\ref{EX:pentagon},  which was given by $Q = \conv\{ (0,0), (1,0), (2,1), (1,2), (0,1)\}$. Its $\mathcal{H}$-representation is $\{(x,y)^\top\in\RR^2 : y\geq 0, -x+y \geq -1, -x-y \geq -3, x-y \geq -1, x \geq 0\},$
so that
\[
S_Q = \begin{bmatrix} 0 & 1 & 3 & 1 & 0 \\  0 & 0 & 2 & 2 & 1 \\ 1 & 0 & 0 & 2 & 2 \\ 2 & 2 & 0 & 0 & 1 \\ 1 &2 & 2 & 0 & 0
\end{bmatrix}.
\]
It is not hard to check that this is also a slack matrix of the cone~$K$ over this pentagon.
\label{EX:pentagoncone}
\end{example}

Since the affine hull of a realization of $P$ is $d$-dimensional and the linear hull of a realization of $K$ is $(d+1)$-dimensional, we have
\[
\rk\left(\begin{bmatrix} 1 & \qq_1^\top \\ \vdots& \vdots \\ 1 & \qq_v^\top \end{bmatrix}\right) = \rk\left(\begin{bmatrix}\rr_1^\top \\ \vdots \\ \rr_v^\top \end{bmatrix}\right) = d+1,
\]
which implies their slack matrices also have rank $d+1$.
The zeros in each slack matrix also record the extreme point (or ray)-facet incidences of $P$ (or~$K$). Furthermore, we will always have $\mathbbm{1}$ in the column space of the slack matrix of a polytope, whereas $\mathbbm{1}$ is in the column space of $S_{K'}$ if and only if $K'$ is the homogenization cone of a realization of some polytope \cite[Theorem 6]{slackmatrixpaper}.
Interestingly, it follows from \cite[Theorems 22, 24]{slackmatrixpaper} that the above properties are sufficient to characterize slack matrices of polytopes and cones.

To encode these conditions algebraically, we recall the following definitions.

\begin{definition}
The {\em symbolic slack matrix}, $S_P(\xx)$, of an abstract polytope $P$ is obtained by replacing all nonzero entries in a slack matrix $S_P$ with pairwise distinct variables.
The {\em slack ideal} of $P$ is the ideal generated by the $(d+2)$-minors of $S_P(\xx)$, saturated with respect to the product of all variables in $S_P(\xx)$. 
The {\em slack variety} of~$P$ is the complex variety $\mathcal{V}(I_P) \subset \CC^t$, where $t$ is the number of variables in $S_P(\xx)$.
If $\ss \in \CC^t$ is a zero of $I_P$, then we identify it with the matrix $S_P(\ss)$.

The definitions for an abstract polyhedral cone $K$ are the same. In fact, if $K=P^h$, then $S_P(\xx)=S_K(\xx)$ so that their varieties are also the same, $\VV(I_P)=\VV(I_K)$.
\end{definition}

The above facts show us how the slack variety leads to a realization space of a polytope or a cone.

\begin{theorem}[Corollary 3.4 \cite{GMTWfirstpaper}]\
\begin{enumerate}[label = (\roman*)]
\item Given a polytope $P$, there is a bijection between the elements of $\VV_+(I_P)/(\RR^v_{>0}\times\RR^f_{>0})$ and the classes of projectively equivalent polytopes in the combinatorial class of $P$.
\item Given a cone $K$, there is a bijection between elements of $\VV_+(I_K)/(\RR^v_{>0}\times\RR^f_{>0})$ and the classes of linearly equivalent cones in the combinatorial class of $K$. 
\end{enumerate}
\label{THM:slackrealspaces}
\end{theorem}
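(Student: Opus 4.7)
The plan is to prove part (ii) first and deduce part (i) as a specialization, using the fact that projective equivalence of polytopes in the class of $P$ corresponds exactly to linear equivalence of their homogenization cones in the class of $K=P^h$, together with the identification $I_P = I_K$ recalled earlier in the excerpt. I would define the forward map by sending a realization $K'$ of $K$ to the class of its slack matrix $S_{K'}$. The first checks are that $S_{K'}\in\VV_+(I_K)$ and that the map descends to equivalence classes: rank$\,S_{K'} = d+1$ implies all $(d+2)$-minors of $S_{K'}$ vanish, placing $S_{K'}\in\VV(I_K)$, while its strictly positive entries in the slots where $S_K(\xx)$ has variables (and enforced zeros elsewhere) put it in $\VV_+(I_K)$. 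The quotient by $\RR^v_{>0}\times\RR^f_{>0}$ absorbs precisely the positive row- and column-rescaling freedom that is built into the definition of a slack matrix of a cone, so the forward map is well-defined on linear equivalence classes.

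For bijectivity, injectivity is the easier direction: if two realizations $K_1,K_2$ map to the same element of $\VV_+(I_K)/(\RR^v_{>0}\times\RR^f_{>0})$, then $S_{K_2} = D_1 S_{K_1} D_2$ for positive diagonal $D_1,D_2$, so $S_{K_2}$ is also a slack matrix of $K_1$; the characterization recalled from \cite{GPRT17} (``two realizations are linearly equivalent iff they share a slack matrix'') then forces $K_1$ and $K_2$ to be linearly equivalent. For surjectivity I would invoke \cite[Theorem 24]{slackmatrixpaper}: any nonnegative matrix of rank $d+1$ with the prescribed zero pattern of $K$ is the slack matrix of some realization in the combinatorial class of $K$. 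Given $S\in\VV_+(I_K)$, the vanishing of all $(d+2)$-minors gives rank at most $d+1$; strict positivity of all slot-variables (guaranteed by the saturation in the definition of $I_K$) forbids any further rank drop and pins down the zero pattern exactly, so $S$ satisfies the hypotheses of that characterization and lifts to a realization.

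Part (i) then follows by specialization. Since $\VV_+(I_P) = \VV_+(I_K)$ and $\RR^v_{>0}\times\RR^f_{>0}$ acts the same way on both sides, part (ii) yields a bijection with linear equivalence classes of cones in the class of $K=P^h$; passing through the correspondence $Q\leftrightarrow Q^h$ converts this into a bijection with projective equivalence classes of polytopes in the class of $P$. The analogous characterization \cite[Theorem 22]{slackmatrixpaper} for polytope slack matrices can be used as an alternative direct route in case one prefers to avoid the cone detour.

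The main obstacle I expect is the surjectivity step, i.e.\ showing that every point of $\VV_+(I_K)$ genuinely comes from a realization whose combinatorial type is exactly $K$ rather than some degeneration with additional incidences or extra facets. This is precisely what the saturation in the definition of the slack ideal is engineered to handle: it ensures that positivity is preserved at every slot variable so that the rank-pattern hypotheses of the characterization theorems in \cite{slackmatrixpaper} are genuinely met. Once those theorems are invoked, the rest of the proof is a routine bookkeeping of the two scaling actions.
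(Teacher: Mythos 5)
The statement is quoted verbatim from \cite{GMTWfirstpaper} (Corollary 3.4) and the present paper gives no proof of it; so there is no in-paper argument to compare against. Judged on its own merits, your proof is sound and follows what one would expect the cited proof to look like: send a realization to (the class of) its slack matrix, verify landing in $\VV_+(I_K)$ via the rank-$(d+1)$ and positivity conditions, get injectivity from \cite[Cor.~1.5]{GPRT17}, get surjectivity from the characterization in \cite[Thm.~24]{slackmatrixpaper}, and obtain part (i) from part (ii) via the $Q\leftrightarrow Q^h$ correspondence and the identification $\VV(I_P)=\VV(I_K)$.

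Two small points where you should be more careful. First, you write that $\rk S_{K'}=d+1$ already places $S_{K'}\in\VV(I_K)$ because the $(d+2)$-minors vanish; this is not quite right, because $I_K$ is the \emph{saturation} of the minor ideal, so $\VV(I_K)$ is in general strictly smaller than the minor variety. What actually closes the gap is exactly what you invoke in the next clause: since $S_{K'}$ has nonzero entries in every variable slot, it lies in $\VV(\text{minors})\setminus\VV(\prod x_i)\subseteq\VV(I_K)$. It would be cleaner to make the logical order explicit. Second, for surjectivity you must check not merely that an $S\in\VV_+(I_K)$ is \emph{some} cone's slack matrix, but that the reconstructed cone has the combinatorial type $K$; this follows because the zero pattern of $S$ reproduces the ray--facet incidences and these determine the face lattice, but the step is worth stating since otherwise the characterization theorem only hands you ``a cone'' without pinning down its type. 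With these clarifications the argument is complete and correct.
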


The space $\VV_+(I_P)/(\RR^v_{>0}\times\RR^f_{>0})$ is called the {\em slack realization space} of $P$ and $\VV_+(I_K)/(\RR^v_{>0}\times\RR^f_{>0})$ is called the {\em slack realization space} of $K$.
Notice that when $K=P^h$, these spaces coincide.

\begin{example} \label{EX:pentagonslack}
Let $P$ be again the pentagon. Then
\[
S_P(\xx) = \begin{bmatrix} 0 & x_1 & x_2 & x_3 & 0 \\ 0 & 0 & x_4 & x_5 & x_6 \\ x_7 & 0 & 0 & x_8 & x_9 \\ x_{10} & x_{11} & 0 & 0 & x_{12} \\ x_{13} & x_{14} & x_{15} & 0 & 0 \end{bmatrix},
\]
which gives a slack ideal $I_P$ with a lexicographic Gr\"obner basis consisting of $25$ polynomials of degree 4.

We construct the slack realization space of the pentagon by first modding out row and column scalings (the action of $\RR^v_{>0}\times\RR^f_{>0}$) by setting a collection of variables in the slack matrix to $1$ (see \cite[Lemma 5.2]{GMTWsecondpaper} for more details on how the entries can be chosen)
\[
S_P(\xx) = \begin{bmatrix} 0 & 1 & x_2 & 1 & 0 \\ 0 & 0 & 1 & 1 & x_6 \\ 1 & 0 & 0 & 1 & 1 \\ 1 & x_{11} & 0 & 0 & x_{12} \\ 1 & x_{14} & x_{15} & 0 & 0 \end{bmatrix}.
\]
Now taking the slack ideal of the above matrix, we find that the slack realization space is the positive part of the variety of
\[
\begin{array}{lll}
\langle x_{11}x_{15}+x_{11}-x_{14}, &x_2x_{14}-x_{14}-x_{15}-1,  & x_{12}x_{14}+x_{11}+x_{12}-x_{14}, \\
\;\; x_6x_{14}+x_6-1, & x_2x_{12}+x_{12}x_{15}-x_{15}, & x_6x_{15}+x_{12}x_{15}+x_{12}-x_{15}, \\
\;\; x_2x_{11}-x_{11}-1, & x_6x_{11}+x_6+x_{12}-1,  & x_2x_6\!-\!x_{12}x_{15}\!-\!x_2\!-\!x_{12}\!+\!x_{15}\!+\!1 \rangle.\end{array}
\]
\end{example}

\subsection{Slack matrices from the Grassmannian}

Recall that the slack matrix of a cone is the product of the matrix whose rows are its extreme rays with the matrix whose columns are its facet normals. Using (\ref{EQ:facetnormal}), 
this means
given a realization $\mat{R}\in\RR^{v\times f}$ with the combinatorics of~$K$, we can calculate the entry of its slack matrix corresponding to ray $i$ and facet $F$ by
\begin{equation} (S_K)_{i,F} =  \det\big[\mat{R}_{J_F}^\top|\rr_i \big] \label{EQ:slackasdets} \end{equation}
where $J_F$ indexes a set of $d$ rays which span facet $F$ of $K$. From this formulation of the slack matrix we can see that the slack matrix of this realization of $K$ can be obtained by filling a $v\times f$ matrix with Pl\"ucker coordinates of $\mat{R}$ (see also \cite{BW19}).

\begin{example}
Let $K$ be a cone over a pentagon with extreme rays generated by the rows of matrix $\mat{X}$ of Example~\ref{EX:pentagonGrass}. The Pl\"ucker vector of $\mat{X}$ is
\[
\begin{array}{rccccccccccc}
\scriptstyle i_0i_1i_2 & & \scriptstyle123 &\scriptstyle124 & \scriptstyle134 & \scriptstyle234 & \scriptstyle125 & \scriptstyle135 & \scriptstyle235 & \scriptstyle145 & \scriptstyle245 & \scriptstyle345 \\[-2pt]
pl(\mat{X}) &= &[1:&2:&3:&2:&1:&2:&2:&1:&2:&2].
\end{array}
\]
The facets of $K$ are spanned by extreme rays $\{1,2\}$, $\{2,3\}$, $\{3,4\}$, $\{4,5\}$ and $\{5,1\}$, and we can calculate the slack matrix of $K$ given in Example~\ref{EX:pentagoncone} from~$\mat{X}$ using \eqref{EQ:slackasdets} as follows:
\[
S_K = \begin{blockarray}{c*{5}{@{\;}c@{\;}}}
 & {12} & {23} & {34} & {45} & {51} \\
 \begin{block}{c@{\;\;}[*{5}{@{\;}c@{\;}}]}
1 & 0 & 1 & 3 & 1 & 0 \\
2 & 0 & 0 & 2 & 2 & 1 \\
3 & 1 & 0 & 0 & 2 & 2 \\
4 & 2 & 2 & 0 & 0 & 1 \\
5 & 1 &2 & 2 & 0 & 0 \\
\end{block}
\end{blockarray} =\!\!\! \begin{blockarray}{c*{5}{@{\;}c@{\;}}}
\\
\begin{block}{c[*{5}{@{\;}c@{\;}}]}
 &  pl(\mat{X})_{121} &  pl(\mat{X})_{231} & pl(\mat{X})_{341} &  pl(\mat{X})_{451} &  pl(\mat{X})_{511} \\
 & pl(\mat{X})_{122}  & pl(\mat{X})_{232} &  pl(\mat{X})_{342} &  pl(\mat{X})_{452}  & pl(\mat{X})_{512} \\
 & pl(\mat{X})_{123} &  pl(\mat{X})_{233} &  pl(\mat{X})_{343} & pl(\mat{X})_{453} &  pl(\mat{X})_{513} \\
 & pl(\mat{X})_{124} &  pl(\mat{X})_{234}  & pl(\mat{X})_{344} & pl(\mat{X})_{454} &  pl(\mat{X})_{514} \\
 & pl(\mat{X})_{125}  & pl(\mat{X})_{235} &  pl(\mat{X})_{345}  & pl(\mat{X})_{455}  & pl(\mat{X})_{515} \\
\end{block}
\end{blockarray}\;.
\]
\label{EX:pentagonPlslack}
\end{example}

Unlike for the pentagon, cones over non-simplicial polytopes will have facets for which there are multiple choices for a spanning set $J$. Denote by $\mathcal{B}_K = \{J_F : F\in\textup{facets}(P)\} \subset\binom{[v]}{d}$ one such set of choices; that is, $J_F$ spans facet $F$ of $K$.
What~\eqref{EQ:slackasdets} tells us is that, given a choice of $\mathcal{B}_K$, we get a map $\GRVmap$ from $Gr_+(K)$ to the slack realization space of $K$ given by
\[
\GRVmap(\LL) :=  \bigg[ \Delta_{i,J}pl(\LL)_{i,J}\bigg]_{i\in[v],J\in\mathcal{B}(K)},
\]
where $\Delta_{i,J}$ is a sign that depends on the orientation of simplex $J$ as well as
the sign of the permutation that orders the elements of $\{i\}\cup J$.

\begin{example} Let $K$ be a cone over a triangular prism with extreme ray generators given by the rows of the following matrix:
\[
\mat{X} = \begin{bmatrix}  1 & 0 & 0 & 0 \\ 1 & 1 & 0 & 0 \\ 1 & 0 & 1 & 0 \\ 1 & 0 & 0 & 1 \\ 1 & 1 & 0 & 1 \\ 1 & 0 & 1 & 2  \end{bmatrix}.
\]
Then the facets of $K$ are indexed by $\{123,456,1245,1346,2356\}$. One choice of $\mathcal{B}_K$ is then $\{123,456,124,136,236\}$ and with this choice we find $\GRVmap(\rho(\mat{X}))$ gives
\begin{small}
\[
S_K \!= \!\! \begin{blockarray}{c*{5}{@{\,}c@{\,}}}
&  \scriptstyle{123} & \scriptstyle{456} &\scriptstyle {124}   &\scriptstyle {136} &\scriptstyle {236} \\
 \begin{block}{c@{\;\;}[*{5}{@{\,}c@{\,}}]}
 1 &  0 &  1 &  0 &  0 & 2 \\
 2 &  0  & 1 & 0  & 2 &  0 \\
 3 & 0 &  2  & 1 &  0 &  0 \\
 4 & 1 &  0 & 0 &  0  & 2 \\
 5 & 1  & 0 & 0  & 2 &  0  \\
 6   & 2  & 0 & 1  & 0 &  0 \\
 \end{block}
 \end{blockarray} = \!\!\!\!\begin{blockarray}{c*{5}{@{\;}c@{\;}}}
 \\
\begin{block}{c[*{5}{@{\;}c@{\;}}]}
 & pl(\mat{X})_{1123} &  pl(\mat{X})_{1456}  &  pl(\mat{X})_{1124} &  pl(\mat{X})_{1136}  & pl(\mat{X})_{1236}\\
 & pl(\mat{X})_{1223}  & pl(\mat{X})_{2456}   & pl(\mat{X})_{1224}  & pl(\mat{X})_{1236}  &  pl(\mat{X})_{2236} \\
 & pl(\mat{X})_{1233} &  pl(\mat{X})_{3456}    &  pl(\mat{X})_{1234} & pl(\mat{X})_{1336} &  pl(\mat{X})_{2336} \\
 & pl(\mat{X})_{1234} &  pl(\mat{X})_{4456} &  pl(\mat{X})_{1244}  & -pl(\mat{X})_{1346}  & pl(\mat{X})_{2346} \\
 & pl(\mat{X})_{1235}  & pl(\mat{X})_{4556}  & -pl(\mat{X})_{1245}  & -pl(\mat{X})_{1356}  &  pl(\mat{X})_{2356}   \\
 & pl(\mat{X})_{1236}  & pl(\mat{X})_{4566}  & -pl(\mat{X})_{1246}   & pl(\mat{X})_{1366}  &  pl(\mat{X})_{2366} \\
\end{block}
\end{blockarray}\,.
\]
\end{small}
Notice that, we could just have easily chosen $\mathcal{B}_K = \{123,456,125,134,356\}$.
\label{EX:triprismslack}
\end{example}

Relaxing this map from the realization space of $K$ to $Gr(K)$, we get
\begin{equation}
\begin{array}{rl}
\GRVmap : Gr(K) \!\!\!& \to \VV(I_K)^* := \VV(I_K)\cap(\RR^*)^t \\[5pt]
\LL \!\!\! & \mapsto \bigg[ \Delta_{i,J}pl(\LL)_{i,J}\bigg]_{i\in[v],J\in\mathcal{B}(K)}
\end{array}
\label{EQ:phidef}
\end{equation}
Notice that this relaxation is well-defined, since $\GRVmap(\LL)$ has the correct zero pattern by definition of $Gr(K)$: any nonzero entry of a slack matrix is indexed by
a facet extension and each zero entry is indexed by vertices contained in a facet of~$K$.
Furthermore, $\GRVmap(\LL)$ has rank $d+1$ since it is obtained by applying linear functionals to some rank $d+1$ matrix whose column space is $\LL$.

Since we  think about this map as filling a slack matrix with Pl\"ucker coordinates, we will use the additional notation $S_K(\LL_{\mathcal{B}})$ to denote the image of $\LL$ under $\GRVmap$ in order to emphasize that the choice of $\mathcal{B}$ determines which Pl\"ucker coordinates are used as slack entries.
We will use the map $\GRVmap$ to prove the following theorem.

\begin{theorem} The nonzero real part of the slack variety of $K$, $\VV(I_K)^*$, up to column scaling, is birationally equivalent to the Grassmannian of the cone $K$, $Gr(K)$. When $K=P^h$ for polytope $P$, $Gr(K)$ is also equivalent to the nonzero real part of the slack variety of $P$, $\VV(I_P)^*$, up to column scaling.
\label{THM:slackGrassequiv} \end{theorem}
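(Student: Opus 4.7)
\emph{Proof plan.} The plan is to pair $\GRVmap$ from \eqref{EQ:phidef} with an explicit inverse $\VGRmap : \VV(I_K)^*/\RR^f \dashrightarrow Gr(K)$, $[S]\mapsto \mathrm{colspan}(S)$, and verify that the two compositions are the identity on dense open subsets.

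First, I would check that $\GRVmap$ descends to the quotient $\VV(I_K)^*/\RR^f$ and that $\VGRmap$ is well-defined. For the former, any two representatives $X, XA$ (with $A\in GL_{d+1}(\RR)$) of $\LL$ produce Plücker vectors differing by $\det A$, and hence slack matrices differing by the uniform column scaling by $\det A$, so $\GRVmap(\LL)$ is determined in the column-scaling quotient. For the latter, column scaling preserves column span, and on the dense open locus where $\mathrm{rank}(S) = d+1$ the column span defines a point in $Gr(d+1,v)$. To see that this point also lies in $Gr(K)$, factor $S = XY$ with $X\in\RR^{v\times(d+1)}$ of column space $\mathrm{colspan}(S)$, and write $\mat{x}_i$ for the $i$-th row of $X$ and $\yy_F$ for the $F$-th column of $Y$. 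For $J\in\mathcal{F}(K)$ contained in facet $F$, the zeros $\mat{x}_j\cdot\yy_F = 0$ for $j\in J$ place $d+1$ vectors into the $d$-dimensional subspace $\yy_F^\perp$, forcing $\det X_J = 0$. For $J = \{j_0,j_1,\dots,j_d\}\in\overline{\mathcal{F}}(K)$ with $\{j_1,\dots,j_d\}$ spanning facet $F$ and $j_0\notin F$, the non-vanishing $S_{j_0,F}\neq 0$ shows $\mat{x}_{j_0}\notin\yy_F^\perp$, while $\mat{x}_{j_1},\dots,\mat{x}_{j_d}\in\yy_F^\perp$ and generically span that hyperplane, so $\det X_J \neq 0$.

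Next I would verify the two compositions agree with the identity. For $\VGRmap\circ\GRVmap$, formula \eqref{EQ:slackasdets} exhibits $\GRVmap(\LL)$ as a matrix product $X\widetilde B$, where the $F$-th column of $\widetilde B$ is the cross-product normal from \eqref{EQ:facetnormal}; its column space is therefore $\LL$. For $\GRVmap\circ\VGRmap$, given $S = XY$, each column $\yy_F$ is determined up to a scalar $c_F\in\RR^*$ by its prescribed vanishing on the rows of $X$ indexed by facet $F$ of $K$; \eqref{EQ:facetnormal} then identifies $\yy_F = c_F\widetilde B_F$, so $S$ and $\GRVmap(\mathrm{colspan}(S))$ agree up to scaling the $F$-th column by $c_F$, i.e.\ in the quotient $\VV(I_K)^*/\RR^f$. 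Finally, when $K = P^h$ the symbolic slack matrices $S_P(\xx)$ and $S_K(\xx)$ coincide, so $\VV(I_P)^* = \VV(I_K)^*$ and the same birational equivalence transfers to $P$.

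The main obstacle is the rank statement used above: ensuring, on a dense open subset of $\VV(I_K)^*$, both that $\mathrm{rank}(S) = d+1$ and that for each $J\in\overline{\mathcal{F}}(K)$ the rows $\mat{x}_{j_1},\dots,\mat{x}_{j_d}$ actually span the facet hyperplane $\yy_F^\perp$. Each of these is a non-trivial open condition that fails on a proper subvariety, and having to exclude that subvariety is precisely why the equivalence is only \emph{birational} rather than biregular.
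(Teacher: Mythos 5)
Your proposal is correct in its essentials and proves the same statement, but by a route that differs from the paper's in two ways worth noting.

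First, to show that $\VGRmap(\ss) = \rho(\ss)$ actually lands in $Gr(K)$, you factor $\ss = \mat{X}\mat{Y}$ and argue row-by-row that the Plücker sign conditions hold. The paper instead outsources this step: it cites \cite[Theorem 3.2]{GMTWfirstpaper} for the dimension count and \cite[Theorem~14]{slackmatrixpaper} for the combinatorial correctness of the resulting vector configuration. Your direct argument is more self-contained, but it is also where you must impose genericity: you need the $d$ rows of $\mat{X}$ indexed by the chosen spanning set $J_F \in \mathcal{B}_K$ to actually span the hyperplane $\yy_F^\perp$, which is not automatic for an arbitrary point of $\VV(I_K)^*$. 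You flag this honestly as the reason the equivalence is only birational, which is a fair and accurate reading, but note that the paper gets around much of this with a flag argument: for any $S \in \VV(I_K)^*$, there is a flag giving a $(d+1)\times(d+1)$ lower-triangular submatrix with nonzero diagonal (cf.\ Proposition~\ref{PROP:colscale}), which forces $\mathrm{rank}(S) = d+1$ unconditionally and also supplies, for each facet $F$, a set of $d$ rows that provably span $\yy_F^\perp$. The residual genericity issue is only about whether the \emph{chosen} $J_F$ (as opposed to \emph{some} $d$-subset of $F$) gives independent rows.

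Second, your verification of $\GRVmap\circ\VGRmap = \mathrm{id}$ folds in a re-derivation of the paper's Proposition~\ref{PROP:colscale}: you argue column-by-column that $\yy_F$ is unique up to a scalar once its vanishing pattern is fixed, so the two slack matrices agree in the column-scaling quotient. The paper separates this into a standalone proposition proved via the triangular-submatrix trick and then invokes it to reduce the second composition to the first; your inline argument is equivalent in content but uses the orthogonal-complement characterization of the facet normal rather than the flag. Both are valid. The one point you should tighten is the claim that $\GRVmap(\LL) = \mat{X}\widetilde{\mat{B}}$ has column space equal to $\LL$: this requires $\mathrm{rank}(\widetilde{\mat{B}}) = d+1$, i.e.\ that the facet normals span $\RR^{d+1}$, which again follows from a flag argument but deserves a sentence (the paper asserts $\rk(S_K(\LL_\BB)) = d+1$ without elaboration either, so this is a small gap shared by both).
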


To prove this theorem, we define the following reverse map.
\begin{equation}
\begin{array}{rl}
\VGRmap : \VV(I_K)^* & \to Gr(K) \\
\ss &\mapsto \rho(\ss).
\end{array}
\label{EQ:psidef}
\end{equation}
That is, we map a matrix in the slack variety to its column space. This is a $(d+1)$-dimensional subspace of $\RR^v$ by \cite[Theorem 3.2]{GMTWfirstpaper} and it will have the correct Pl\"ucker coordinates because the rows of $S_K(\ss)$ form a realization of $K$ by the reasoning of \cite[Theorem~14]{slackmatrixpaper}.

\begin{proposition} Two elements of $\VV(I_K)^*$ are the same up to column scaling if and only if they have the same column space.
\label{PROP:colscale}\end{proposition}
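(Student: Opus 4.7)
The forward direction is essentially trivial: if $S_K(\ss_2)$ is obtained from $S_K(\ss_1)$ by scaling each column by a nonzero scalar, each new column lies in the span of the corresponding old column, so the two column spaces coincide. My plan for the converse is to argue column by column, reducing it to a one-dimensionality statement inside the shared column space.

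Specifically, suppose $S_1 := S_K(\ss_1)$ and $S_2 := S_K(\ss_2)$ both have column space $\LL \subset \RR^v$. Both matrices have the combinatorial zero pattern of $K$, so for each facet $F_j$ of $K$ the $j$th columns of $S_1$ and $S_2$ vanish exactly at the row indices of the extreme rays lying on $F_j$. The key step will be to show that inside $\LL$ the subspace of vectors vanishing on all those coordinates is one-dimensional. Once this is in place, the $j$th columns of $S_1$ and $S_2$ must lie on the same line, and so are nonzero scalar multiples of one another; collecting these scalars across $j$ produces the desired column scaling.

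To prove the one-dimensionality claim I would pick any matrix $\mat{R} \in \RR^{v\times(d+1)}$ with $\rho(\mat{R}) = \LL$; since $\ss_i \in \VV(I_K)^*$, the slack matrices have rank $d+1$ by \cite[Theorem~3.2]{GMTWfirstpaper}, so $\mat{R}$ has full column rank and its rows generate a realization of $K$ up to $GL_{d+1}(\RR)$. Any vector in $\LL$ has the form $\mat{R}\aalpha$ for some $\aalpha \in \RR^{d+1}$, with $i$th entry $\rr_i^\top\aalpha$; requiring this to vanish for every ray $i$ on $F_j$ forces $\aalpha$ into the orthogonal complement of $\textup{span}\{\rr_i : i \in F_j\}$. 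Since the rays on $F_j$ span the $d$-dimensional linear hull of the facet, this orthogonal complement is one-dimensional, generated by the facet normal $\aalpha_j$ of~\eqref{EQ:facetnormal}.

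The main obstacle, if any, lies in making the combinatorial input precise: one must use that the labeled extreme rays lying on a facet really do span the full $d$-dimensional facet hyperplane of the underlying realization, rather than a smaller subspace. This is already guaranteed by the characterization of $Gr(K)$ via facet extensions, and has been used repeatedly in the earlier Grassmannian arguments, so the proof should go through cleanly.
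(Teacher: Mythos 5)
Your proof is correct, and it takes a genuinely different route from the paper's. The paper argues via flags: for each facet $F_j$ it selects a maximal chain through $F_j$ in the face lattice to obtain a $(d+1)\times(d+1)$ lower-triangular submatrix with nonzero diagonal (citing \cite[Lemma 3.1]{GMTWfirstpaper}), observes that the columns of that submatrix span the common column space, and then uses the zero pattern to force the change-of-basis matrix between the two resulting column bases to scale the column of $F_j$ without mixing it with the others. You bypass the flag machinery entirely by isolating each column on its own: the $j$th columns of $S_K(\ss)$ and $S_K(\tt)$ both lie in the subspace of $\LL$ consisting of vectors vanishing on the rows indexed by the rays of $F_j$, and this subspace is one-dimensional because, for any basis matrix $\mat{R}$ of $\LL$, the rows of $\mat{R}$ indexed by $F_j$ have rank exactly $d$ (rank at least $d$ from the nonvanishing Pl\"ucker coordinates on facet extensions, rank at most $d$ from the vanishing Pl\"ucker coordinates on $\mathcal{F}(K)$), so the orthogonal complement giving the facet normal as in~\eqref{EQ:facetnormal} is one-dimensional. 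Both arguments ultimately exploit that the zero pattern together with the column space pins down each column up to scale, but yours makes this local, column-by-column, and reuses the facet-normal computation already central to the Grassmannian model, while the paper leans on the flag structure. Two small remarks: your argument tacitly uses that $\rho(\ss)\in Gr(K)$ for $\ss\in\VV(I_K)^*$, which the paper establishes just before this proposition when defining $\VGRmap$, so the logical order is fine; and the phrase ``its rows generate a realization of $K$'' is a mild over-claim for a general element of $\VV(I_K)^*$ (signs may be wrong), but your argument uses only the rank constraints, which are guaranteed, so this does not affect correctness.
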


\begin{proof} Let $\ss,\tt\in \VV(I_K)^*$. Clearly if $S_K(\ss) = S_K(\tt)\cdot D_f$ for some invertible diagonal matrix~$D_f$, then
\[
S_K(\ss) \cdot \RR^f = S_K(\tt)\cdot D_f\cdot \RR^f = S_K(\tt)\cdot \RR^f,
\]
so their column spaces are the same.

Conversely, suppose $S_K(\ss), S_K(\tt)$ have the same column space. For each facet~$F_j$, corresponding to column $j$ of each slack matrix, there exists a flag (maximal chain) through $F_j$ in the face lattice of $K$ from which we obtain a $(d+1)\times(d+1)$ lower triangular submatrix, $S_K(\tt)$, of $S_K(\ss)$ with nonzero diagonal (see \cite[Lemma 3.1]{GMTWfirstpaper}). The columns corresponding to each submatrix will span their respective column spaces, and since these spaces are the same, there must be a change of basis matrix $\mat{A}\in GL_{d+1}(\RR)$ taking one to the other:
{\renewcommand{\arraystretch}{0.8}
\[
\underbrace{\begin{blockarray}{cccc}
&&& F_j  \\
\begin{block}{[cccc]}
\\[-5pt]
\lambda_1 & 0 & \cdots & 0 \\ * & \lambda_2 & \cdots & 0 \\ \vdots & &\ddots & \\ * & & \cdots & \lambda_{d+1} \\ * & & \cdots & * \\ \vdots & && \vdots \\ * & & \cdots & *  \\
\end{block}\end{blockarray}}_{S_K(\ss)} \cdot\mat{A} =
\underbrace{\begin{blockarray}{cccc}
& & &F_j  \\
\begin{block}{[cccc]}
\\[-5pt]
\gamma_1 & 0 & \cdots & 0 \\ * & \gamma_2 & \cdots & 0 \\ \vdots & &\ddots & \\ * & & \cdots & \gamma_{d+1} \\ * & & \cdots & * \\ \vdots & && \vdots \\ * & & \cdots & * \\
\end{block}\end{blockarray}}_{S_K(\tt)}.
\]}

However, from the zero pattern, it is clear that the column corresponding to $F_j$ in the $S_K(\ss)$ can only be a scaling of the same column in $S_K(\tt)$. Since this is true for each facet of $K$, the result follows.
\end{proof}

Since it is clear that $\GRVmap$ and $\VGRmap$ are rational when we record elements of $Gr(K)$ by their Pl\"ucker coordinates, Theorem~\ref{THM:slackGrassequiv} becomes a corollary of the following result.

\begin{lemma} The maps $\GRVmap,\VGRmap$ defined in \eqref{EQ:phidef},\eqref{EQ:psidef}, respectively, are inverses; that is
$\VGRmap\circ\GRVmap = id_{Gr(K)}$ and $\GRVmap\circ\VGRmap = id_{\VV(I_K)^*/\RR^f}$. \label{LEM:inverses} \end{lemma}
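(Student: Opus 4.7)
The plan is to verify each composition separately, with the identity on $\VV(I_K)^*/\RR^f$ being the one that needs Proposition~\ref{PROP:colscale}, while the identity on $Gr(K)$ is essentially immediate from the Pl\"ucker/slack determinantal identity \eqref{EQ:slackasdets}.

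For $\VGRmap\circ \GRVmap = id_{Gr(K)}$, I would fix $\LL\in Gr(K)$ and choose any representative matrix $\mat{X}_\LL\in\RR^{v\times(d+1)}$ with $\rho(\mat{X}_\LL)=\LL$. Using \eqref{EQ:slackasdets}, together with the fact that each entry of $\GRVmap(\LL)=S_K(\LL_\mathcal{B})$ is (up to the sign $\Delta_{i,J}$) the Pl\"ucker coordinate $pl(\mat{X}_\LL)_{i\cup J}$, we may write $S_K(\LL_\mathcal{B}) = \mat{X}_\LL\cdot \mat{B}$, where the columns of $\mat{B}\in\RR^{(d+1)\times f}$ are the facet normals $\aalpha_{J_F}$ built from $\mat{X}_\LL$ as in \eqref{EQ:facetnormal}. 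Hence the column space of $S_K(\LL_\mathcal{B})$ is contained in $\LL$, and since both are $(d+1)$-dimensional (the slack matrix has rank $d+1$ by construction), they coincide. Thus $\VGRmap(\GRVmap(\LL))=\rho(S_K(\LL_\mathcal{B}))=\LL$.

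For $\GRVmap\circ\VGRmap = id_{\VV(I_K)^*/\RR^f}$, I would start with $\ss\in\VV(I_K)^*$ and set $\LL=\VGRmap(\ss)=\rho(S_K(\ss))\in Gr(K)$. By the previous paragraph applied to $\LL$, the matrix $\GRVmap(\LL)$ lies in $\VV(I_K)^*$ and has column space exactly $\LL$. On the other hand $S_K(\ss)$ also lies in $\VV(I_K)^*$ with column space $\LL$ by construction. Proposition~\ref{PROP:colscale} then forces $\GRVmap(\VGRmap(\ss))$ and $\ss$ to agree up to column scaling, which is exactly the identity in $\VV(I_K)^*/\RR^f$.

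The only real subtlety I expect is bookkeeping: one must verify that $\GRVmap(\LL)$ really belongs to $\VV(I_K)^*$ in the first place, i.e., that the Pl\"ucker-filled matrix has the correct zero/nonzero pattern and vanishing $(d+2)$-minors. The nonzero entries are precisely those indexed by facet extensions, which is built into the definition of $Gr(K)$; the zeros correspond to Pl\"ucker coordinates $pl(\LL)_{i\cup J}$ where $\{i\}\cup J$ lies in some facet, forcing the corresponding $(d+1)\times(d+1)$ submatrix of $\mat{X}_\LL$ to be rank-deficient; and the rank condition on $(d+2)$-minors follows automatically from the factorization $S_K(\LL_\mathcal{B})=\mat{X}_\LL\mat{B}$ which has rank at most $d+1$. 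Once these verifications are in place, the two compositions collapse to identities and the birational equivalence asserted in Theorem~\ref{THM:slackGrassequiv} follows immediately, as does its polytope specialization by restricting to $\LL\ni\mathbbm{1}$ via Corollary~\ref{COR:polytopeGrassequiv}.
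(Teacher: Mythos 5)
Your proof follows essentially the same approach as the paper: you establish $\VGRmap\circ\GRVmap = \mathrm{id}_{Gr(K)}$ via the factorization $S_K(\LL_\mathcal{B}) = \mat{X}_\LL\mat{B}$ and a rank/column-space comparison, and then reduce the second composition to the first by invoking Proposition~\ref{PROP:colscale}, exactly as the paper does. The only difference is that you explicitly restate the verification that $\GRVmap(\LL)\in\VV(I_K)^*$ inside the proof, whereas the paper disposes of this in the text immediately preceding the lemma (right after \eqref{EQ:phidef}); the content is the same.
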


\begin{proof}
First consider
\begin{align*}
\VGRmap\circ\GRVmap (\LL) & = \rho(S_K(\LL_{\BB})). \\
\end{align*}
We wish to show that the column space of $S_K(\LL_{\BB})$ is $\LL$.
Let $\mat{X}\in\RR^{v\times(d+1)}$ be a matrix whose columns form a basis for the subspace $\LL$ and let $\aalpha_j$ be the normal to facet $j$ calculated from the rows of $\mat{X}$ as in \eqref{EQ:facetnormal}, so that
\[
S_K(\LL_{\BB}) = \mat{X} \begin{bmatrix} \rule{0.5pt}{7pt} &  & \rule{0.5pt}{7pt} \\ \aalpha_{1} & \cdots & \aalpha_{f} \\ \rule{0.5pt}{7pt} &  & \rule{0.5pt}{7pt} \end{bmatrix}.
\]
Since $\rk(S_K(\LL_{\BB})) = \rk(\mat{X}) = d+1$, we also have $\rk([\aalpha_1\cdots \aalpha_f]) = d+1$, so that in particular, the column spaces of $S_K(\LL_{\BB})$ and $\mat{X}$ are the same. Since the column space of $\mat{X}$ is $\LL$ by definition, this gives
\[
\VGRmap\circ\GRVmap (\LL) = \LL,
\]
as desired.

Next consider
\begin{align*}
\GRVmap\circ\VGRmap (\ss) & =  S_K\big(\rho(\ss)_{\BB}\big).
\end{align*}

By Proposition~\ref{PROP:colscale} it suffices to show that the column space of $S_K(\rho(\ss)_{\BB})$ is the same as the column space of the slack matrix $S_K(\ss)$, but this follows from what we just showed, namely that $\rho(S_K(\LL_{\BB})) = \LL$.
\end{proof}

\begin{remark} Notice that, as in Theorem~\ref{THM:raysGrassequiv}, there may be points in $Gr(K)$ whose image under $\GRVmap$ is not an affine equivalence class of realizations of~$P$, but is simply the orbit of a point in the slack variety of $P$ under column scaling; that is, $S_K(\LL_{\BB})$ might not have $\mathbbm{1}$ in its column space. In fact, we see from the above proof that,  as in Corollary~\ref{COR:polytopeGrassequiv}, $\mathbbm{1}\in\rho(S_K(\LL_{\BB}))$ if and only if $\mathbbm{1}\in\LL$ (see Figure~\ref{FIG:relatemodelsmaster}).
\end{remark}

\begin{example} Recall the cone $K$ over the square of Example~\ref{EX:coneoversquare}. Letting $\LL = [w:x:y:z]\in Gr(K)$, we had
\[
{\renewcommand{\arraystretch}{0.8}
 \mat{X}_\LL = \begin{bmatrix} 0 & 0 & 1 \\ 1 & 0 & 1 \\ y/x & w & (w+y/x)-z/x  \\ 0 & x & x \end{bmatrix}.}
\]
For a square, as for a pentagon, there is only one choice for facet bases $\BB_K$, so that map $\GRVmap$ is unique, and gives
\[
{\renewcommand{\arraystretch}{0.8}
\GRVmap(\LL) = S_K(\LL_{\BB}) = \begin{bmatrix} 0 & w & y & 0 \\ 0 & 0 & z & x \\ w & 0 & 0 & y \\ x & z & 0 & 0\end{bmatrix}.}
\]

Even if we restrict to a point of $Gr_+(K)$, say $w=x=y=1$, $z=2$, it is not hard to check that neither~$\mat{X}$, nor the resulting slack matrix, has $\mathbbm{1}$ in its column space. In fact, we can see that the condition of Example~\ref{EX:squaregenerators} which guarantees the rows of $\mat{X}_\LL$ give the realization of a square, namely that $w+y-z=x$, also guarantees condition $(3)$ of \cite[Theorem 2.2]{GMTWfirstpaper} on the resulting slack matrix.
\label{EX:squareparameters}
\end{example}
Example~\ref{EX:squareparameters} also illustrates another important use of the maps which give us the equivalence of Theorem~\ref{THM:slackGrassequiv}, namely, they allow us to obtain useful parametrizations of the slack variety. By filling a slack matrix with Pl\"ucker coordinates, we are essentially imposing a collection of linear equalities on the slack variety determined by which entries are filled with the same Pl\"ucker coordinates. In some cases, we will see that restricting to such a parametrization allows us to greatly simplify the slack ideal.

If we fix a choice of $\BB_K$ for the cone $K$, then we can consider the ``symbolic image'' of the map $\GRVmap$ as being a matrix in the Pl\"ucker variables $\pp$. Namely, from \eqref{EQ:phidef}, $\GRVmap$ evaluates the following symbolic matrix on the Pl\"ucker vector of $\LL$:
\[
\GRVmap(\pp):=\bigg[ \Delta_{i,J}\pp_{i,J}\bigg]_{i\in[v],J\in\mathcal{B}(K)}.
\]
Notice that the entries of $\GRVmap(\pp)$ need not use all the Pl\"ucker coordinates. In particular, entries are indexed by facets and facet extensions, so that we only require Pl\"ucker variables indexed by $\mathcal{F}(K),\overline{\mathcal{F}}(K)\subset\binom{[v]}{d+1}$, and it is often the case that
\[
\mathcal{F}(K) \cup \overline{\mathcal{F}}(K)\neq \binom{[v]}{d+1}.
\]
Furthermore, by definition of $Gr(K)$, only the variables $\pp_{J}$ for $J\in \overline{\mathcal{F}}(K)$ will be nonzero. In light of this and the fact that the ideal of the Grassmannian of $K$ given in \eqref{EQ:GrKideal} vanishes on the Pl\"ucker coordinates of each $\LL$, and hence on the entries of $\GRVmap(\LL)$, we define the following ideal, which gives the conditions on only the nonzero Pl\"ucker coordinates that will be used in $\GRVmap(\LL)$.

\begin{definition} The {\em Grassmannian section ideal} of $K$, denoted $I_{d+1,v}(K)$ is given by eliminating the variables that are not necessary for $\GRVmap$ from $\mathcal{I}(Gr(K))$; that is, $I_{d+1, v}(K)$ is the intersection of $\mathcal{I}(Gr(K))$ with $\RR[\pp_J : J\in{\overline{\mathcal{F}}(K)}]$.
\label{DEF:Grasssection}
\end{definition}

\begin{remark} Since $\GRVmap$ implicitly depends on the choice of $\mathcal{B}_K$, so does $I_{d+1,v}(K)$. Hence we could have several different section ideals for the same cone. \end{remark}

The following lemma is an immediate consequence of  Definitions~\ref{DEF:GrassK} and ~\ref{DEF:Grasssection}.

\begin{lemma} \label{LEM:sectionvariety}
The closure of  the projection of the Grassmannian of $K$ onto its nonzero coordinates is the variety of the Grassmannian section ideal of $K$; that is,
\[
\overline{\pi_{\overline{\mathcal{F}}(K)}(Gr(K))} = \VV(I_{d+1,v}(K)).
\]
\end{lemma}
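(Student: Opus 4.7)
The plan is to apply the Closure Theorem from elimination theory directly. By Definition~\ref{DEF:Grasssection}, the ideal $I_{d+1,v}(K)$ is exactly the elimination ideal obtained from $\mathcal{I}(Gr(K))$ by eliminating the Pl\"ucker variables indexed by $\binom{[v]}{d+1}\setminus \overline{\mathcal{F}}(K)$. The Closure Theorem (see, e.g., Cox--Little--O'Shea) then gives immediately
\[
\VV(I_{d+1,v}(K)) \;=\; \overline{\pi_{\overline{\mathcal{F}}(K)}\big(\VV(\mathcal{I}(Gr(K)))\big)},
\]
so the only thing left to verify is that replacing $\VV(\mathcal{I}(Gr(K)))$ by $Gr(K)$ inside the closure does not change the result.

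For this last step, I would use two standard facts. First, by construction $\mathcal{I}(Gr(K))$ is defined in \eqref{EQ:GrKideal} as a saturation, precisely so that $\VV(\mathcal{I}(Gr(K)))$ is the Zariski closure of the constructible set $Gr(K)$ (the saturation strips off the component contained in $\VV(\prod_{J\in\overline{\mathcal{F}}(K)} \pp_J)$, which is exactly the part excluded by the nonvanishing conditions defining $Gr(K)$). Second, the coordinate projection $\pi_{\overline{\mathcal{F}}(K)}$ is a polynomial map and hence Zariski-continuous, so $\pi(\overline{Gr(K)})\subseteq \overline{\pi(Gr(K))}$. Combined with the trivial inclusion $\overline{\pi(Gr(K))}\subseteq \overline{\pi(\overline{Gr(K)})}$, this yields the equality $\overline{\pi(\VV(\mathcal{I}(Gr(K))))} = \overline{\pi(Gr(K))}$, completing the argument.

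The only mildly delicate point to flag is the real-versus-complex distinction: the Closure Theorem is cleanest over an algebraically closed field, so one should interpret all varieties here over $\CC$ (consistent with how $\VV(I_K)$ is defined in Section~\ref{sec:slackmodel}), with the identification of $Gr(K)$ and its Zariski closure understood in that setting. Once this interpretation is fixed, the proof is essentially a one-line consequence of the Closure Theorem together with the saturation built into the definition of $\mathcal{I}(Gr(K))$, and no further combinatorial or geometric input about $K$ is needed.
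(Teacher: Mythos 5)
Your proof is correct and is exactly the argument the authors had in mind: the paper simply declares the lemma ``an immediate consequence of Definitions~\ref{DEF:GrassK} and~\ref{DEF:Grasssection},'' and what you have written out --- the Closure Theorem for the elimination ideal $I_{d+1,v}(K)$, plus the observation that the saturation in \eqref{EQ:GrKideal} makes $\VV(\mathcal{I}(Gr(K)))$ the Zariski closure of the constructible set $Gr(K)$, so that projecting $Gr(K)$ or its closure yields the same Zariski closure --- is precisely the content the paper is implicitly invoking. Your remark that the argument is cleanest over $\CC$ is a fair and accurate caveat that the paper glosses over.
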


Lemma~\ref{LEM:sectionvariety} together with Theorem~\ref{THM:slackGrassequiv} gives us a geometric relationship between the slack variety and the Grassmannian section variety, namely that points in  $\VV(I_{d+1,v}(K))^*$ are in one-to-one correspondence with representatives of column scaling equivalence classes of~$\VV(I_P)^*$.
However, we also find the following algebraic relationship that allows us to consider the Grassmannian section ideal as a potentially simplifying algebraic relaxation  of the slack ideal.

\begin{proposition}
When appropriate Pl\"ucker variables are substituted for slack variables in the slack ideal $I_{K}$, the resulting ideal is contained in the Grassmannian section ideal $I_{d+1,v}(K)$.
\label{PROP:sectionideal}
\end{proposition}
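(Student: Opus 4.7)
The plan is to use the ring homomorphism suggested by the map $\GRVmap$ in \eqref{EQ:phidef}: namely, the substitution $\sigma \colon \RR[\xx] \to \RR[\pp_J : J \in \overline{\mathcal{F}}(K)]$ sending each slack variable $x_{i,F}$ to the signed Pl\"ucker variable $\Delta_{i,J_F}\,\pp_{\{i\}\cup J_F}$, where $J_F\in\BB_K$ is the chosen basis of facet $F$. The goal is to show $\sigma(I_K) \subseteq I_{d+1,v}(K)$.

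The key technical step is to prove that $\sigma$ sends each $(d+2)$-minor of $S_K(\xx)$ into $I_{d+1,v} + \langle \pp_J : J \in \mathcal{F}(K)\rangle$. To this end, introduce the ``full'' symbolic matrix $\tilde{S}_K(\pp)$ whose $(i,F)$-entry is $\Delta_{i,J_F}\pp_{\{i\}\cup J_F}$ regardless of whether $i\in F$; then $\sigma(S_K(\xx))$ is exactly $\tilde{S}_K(\pp)$ with the variables $\pp_J$ for $J\in\mathcal{F}(K)$ set to zero. Now for a $v\times(d+1)$ matrix of indeterminates $\mat{X}$, formula \eqref{EQ:slackasdets} together with a cofactor expansion shows that $\tilde{S}_K(pl(\mat{X})) = \mat{X}\,\mat{M}$, where $\mat{M}$ is the $(d+1)\times f$ matrix whose columns are the symbolic facet normals obtained from \eqref{EQ:facetnormal}. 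Since this product has rank at most $d+1$, every $(d+2)$-minor of $\tilde{S}_K(pl(\mat{X}))$ vanishes identically as a polynomial in the entries of $\mat{X}$. Equivalently, each $(d+2)$-minor of $\tilde{S}_K(\pp)$ vanishes on the image of the Pl\"ucker embedding, hence belongs to $I_{d+1,v}$. Reducing modulo $\langle \pp_J : J\in\mathcal{F}(K)\rangle$ then gives that $\sigma$ of any $(d+2)$-minor of $S_K(\xx)$ lies in $I_{d+1,v} + \langle \pp_J : J\in\mathcal{F}(K)\rangle$.

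The final step handles the saturations defining both ideals. Given $f \in I_K$, by definition there exists $k$ such that $(\prod_i x_i)^k f$ lies in the ideal generated by the $(d+2)$-minors of $S_K(\xx)$; applying $\sigma$ and the previous step yields
\[
\sigma\!\left(\textstyle\prod_i x_i\right)^{\!k}\sigma(f) \;\in\; I_{d+1,v} + \langle \pp_J : J\in\mathcal{F}(K)\rangle.
\]
Since $\sigma(\prod_i x_i)$ is, up to sign, a monomial in the variables $\pp_J$ with $J\in\overline{\mathcal{F}}(K)$, multiplying through by a suitable power of $\prod_{J\in\overline{\mathcal{F}}(K)}\pp_J$ produces, for some $N$,
\[
\Big(\!\!\!\prod_{J\in\overline{\mathcal{F}}(K)}\!\!\!\pp_J\Big)^{\!N}\sigma(f) \;\in\; I_{d+1,v} + \langle \pp_J : J\in\mathcal{F}(K)\rangle.
\]
By the characterization of $\mathcal{I}(Gr(K))$ as a saturation in \eqref{EQ:GrKideal}, this exactly means $\sigma(f) \in \mathcal{I}(Gr(K))$. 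Because $\sigma(f)$ already lies in $\RR[\pp_J : J\in\overline{\mathcal{F}}(K)]$ by construction, Definition~\ref{DEF:Grasssection} gives $\sigma(f) \in I_{d+1,v}(K)$, as required.

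The main obstacle is the first step, specifically verifying that the factorization $\tilde{S}_K(pl(\mat{X})) = \mat{X}\,\mat{M}$ holds with consistent signs $\Delta_{i,J_F}$; once this is set up carefully, the rank argument is immediate and the saturation manipulations are routine ideal-theoretic bookkeeping.
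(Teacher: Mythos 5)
Your proposal is correct and follows essentially the same route as the paper's proof: establish that the $(d+2)$-minors of the Pl\"ucker-filled symbolic slack matrix $\GRVmap(\pp)$ lie in $I_{d+1,v}$ via a rank-$\leq d+1$ argument, pass to the quotient by $\langle\pp_J : J\in\mathcal{F}(K)\rangle$, and then carry the saturation through. You simply make the steps the paper leaves implicit more explicit, notably the factorization $\tilde{S}_K(pl(\mat{X}))=\mat{X}\mat{M}$ and the monomial bookkeeping that compares $\sigma\!\left(\prod_i x_i\right)$ with $\prod_{J\in\overline{\mathcal{F}}(K)}\pp_J$ in the final saturation step.
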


\begin{proof}
Fix a choice of $\GRVmap$ for $K$. We have already seen that for each $\LL\in Gr(d+1,v)$, the matrix $\GRVmap(\LL) = S_K(\LL_{\BB})$ has rank~$d+1$. This means that the polynomials given by the $(d+2)$-minors of the symbolic matrix $\GRVmap(\pp)$ vanish on every point of $\LL\in Gr(d+1,v)$, and hence, these minors must be in the Pl\"ucker ideal $I_{d+1,v}$. Setting appropriate Pl\"ucker variables to zero in the minors and in $I_{d+1,v}$ preserves this containment, and gives the desired result after saturation.
\end{proof}

\begin{example} Recall that for a cone over a pentagon, $\mathcal{F}(K) = \emptyset$, and all the Pl\"ucker variables are used to fill a slack matrix
\[
\GRVmap(\pp) = \begin{bmatrix}
   0 &  p_{123} & p_{134} &  p_{145} &  0 \\
   0  & 0 &  p_{234} &  p_{245}  & p_{125} \\
    p_{123} & 0 &  0 & p_{345} &  p_{135} \\
   p_{124} &  p_{234}  & 0 & 0 &  p_{145} \\
  p_{125}  & p_{235} &  p_{345}  & 0  & 0
  \end{bmatrix}.
\]
So in this case, the Grassmannian section ideal is just the Pl\"ucker ideal.

Recall from Example~\ref{EX:pentagonslack} that the slack ideal of the pentagon had 25 generators of degree 4. In contrast, the Grassmannian section ideal, which was simply the Pl\"ucker ideal $I_{3,5}$, is much simpler having only the following $5$ trinomial generators of degree $2$:
\[
\begin{array}{c}
p_{235}p_{145}-p_{135}p_{245}+p_{125}p_{345} \\
p_{234}p_{145}-p_{134}p_{245}+p_{124}p_{345} \\
p_{234}p_{135}-p_{134}p_{235}+p_{123}p_{345} \\
p_{234}p_{125}-p_{124}p_{235}+p_{123}p_{245} \\
p_{134}p_{125}-p_{124}p_{135}+p_{123}p_{145}. \\
\end{array}
\]
In this case, $I_{3,5}$ coincides with the saturation of the ideal of $4$-minors of $\GRVmap(\pp)$.
\label{EX:sectionIdealPentagon}
\end{example}

\begin{remark} It might seem that the effect of substituting Pl\"ucker variables into the slack matrix is simply to force certain entries to be equal. While certain equalities are forced by the choice of $\GRVmap$, the insistence  that the entries come from a Pl\"ucker vector is in fact more restrictive than the slack matrix rank condition together with these equalities. That is, the containment of Proposition~\ref{PROP:sectionideal} is strict in general.
\end{remark}

\begin{example} Let $K$ be a $4$-dimensional cone over a triangular prism as in Example~\ref{EX:triprismslack}, with $\BB_K = \{123,456,124,136,236\}$. Then
\[
\GRVmap(\pp) = \begin{blockarray}{c*{5}{@{\;}c@{\;}}}
 & {123} & {456} & {124} & {136} & {236} \\
 \begin{block}{c@{\;\;}[*{5}{@{\;}c@{\;}}]}
1 &  0 &  p_{1456} & 0 &  0 &  p_{1236} \\
 2 & 0 &  p_{2456} & 0 &  p_{1236}  & 0\\
 3 & 0 &  p_{3456} & p_{1234} & 0 &  0 \\
 4 & p_{1234} &  0  & 0 & -p_{1346} &  p_{2346} \\
 5 & p_{1235}  & 0 &  -p_{1245}  & -p_{1356}  & p_{2356} \\
 6 & p_{1236}  & 0 &  -p_{1246}  & 0  & 0 \\
 \end{block}
 \end{blockarray}\;,
\]
and
\[
\begin{array}{rl}
 I_{4,6}(K)\! =\! & \!\!\!\!\!\langle\, p_{2346}p_{1456}+p_{1246}p_{3456}, p_{2346}p_{1356}+p_{1236}p_{3456}, p_{1246}p_{1356}-p_{1236}p_{1456},\\
& \!\!\! p_{1234}p_{1356}p_{2456}-p_{1235}p_{1246}p_{3456}, p_{1235}p_{1246}p_{2346}+p_{1234}p_{1236}p_{2456}\, \rangle.
\end{array}
\]
Simply setting the appropriate slack entries equal, we get
\[
S_K(\xx) = \begin{bmatrix}
0 &  x_1 & 0 &  0 &  x_2 \\
0 &  x_3 & 0 &  x_2  & 0\\
0 &  x_5 & x_6 & 0 &  0 \\
x_6 &  0  & 0 & 0 &  x_8 \\
x_9  & 0 &  0  & x_{10}  & 0 \\
x_2 & 0 &  x_{12}  & 0  & 0 \\
\end{bmatrix}
\]
and the resulting ideal is
\[
\left\langle \begin{array}{c} x_1x_8-x_5x_{12},\\ x_3x_6x_{10}-x_5x_9x_{12} \end{array}
\right\rangle \mapsto
\left\langle \begin{array}{c} p_{2346}p_{1456}+p_{1246}p_{3456},\\ p_{1234}p_{1356}p_{2456}-p_{1235}p_{1246}p_{3456} \end{array} \right\rangle \subsetneq I_{4,6}(K).
\]
\label{EX:triprismPlslack}
\end{example}

We have seen that the Grassmannian section ideal is in a sense a super ideal of the slack ideal (and a potentially simpler ideal) which cuts out a variety which corresponds to the ``section'' of the slack variety that contains an equivalence class representative whose entries come exactly from the Pl\"ucker coordinates of an element of $Gr(K)$. This is a direct consequence of Lemma~\ref{LEM:sectionvariety} and Theorem~\ref{THM:slackGrassequiv}.

\begin{corollary} The nonzero part of the Grassmannian section variety of $K$, $\VV(I_{d+1,v}(K))^*$, is in one-to-one correspondence with equivalence class representatives of $\VV(I_K)^*/\RR^f$ of the form $\GRVmap(\Lambda)$ for $\Lambda\in Gr(K)$. \label{COR:sectionvariety} \end{corollary}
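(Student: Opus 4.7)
The plan is a diagram chase that stitches together Lemma~\ref{LEM:sectionvariety} with Theorem~\ref{THM:slackGrassequiv}. First I would observe, by inspection of the formula for $\GRVmap(\pp)$, that only Pl\"ucker variables $\pp_J$ with $J\in\overline{\mathcal{F}}(K)$ appear as potentially nonzero entries, while the entries indexed by $J\in\mathcal{F}(K)$ are already forced to vanish on all of $Gr(K)$ by Definition~\ref{DEF:GrassK}. Consequently, restricted to $Gr(K)$, the map $\GRVmap$ depends only on the projection of $\Lambda$ onto the coordinates indexed by $\overline{\mathcal{F}}(K)$, so it factors as $\GRVmap = \widetilde{\GRVmap}\circ\pi_{\overline{\mathcal{F}}(K)}$ for a map $\widetilde{\GRVmap}$ defined on $\pi_{\overline{\mathcal{F}}(K)}(Gr(K))$.

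Next I would invoke Lemma~\ref{LEM:sectionvariety}: after restricting to the nonzero locus $(\RR^*)^{|\overline{\mathcal{F}}(K)|}$, the image $\pi_{\overline{\mathcal{F}}(K)}(Gr(K))$ agrees with $\VV(I_{d+1,v}(K))^*$, because the Zariski closure cannot add points on the open torus where all nonvanishing defining conditions of $Gr(K)$ are already preserved. This places $\widetilde{\GRVmap}$ on all of $\VV(I_{d+1,v}(K))^*$, taking values in $\VV(I_K)^*/\RR^f$, and identifies its image as precisely the set of cosets of the form $[\GRVmap(\Lambda)]$ with $\Lambda\in Gr(K)$.

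To show $\widetilde{\GRVmap}$ is injective, and hence is the desired bijection onto its image, I would pull back along $\pi_{\overline{\mathcal{F}}(K)}$: given $\widetilde{\GRVmap}(\pp_1)=\widetilde{\GRVmap}(\pp_2)$, choose lifts $\Lambda_i\in Gr(K)$ with $\pi_{\overline{\mathcal{F}}(K)}(\Lambda_i)=\pp_i$; since $\GRVmap(\Lambda_1)$ and $\GRVmap(\Lambda_2)$ represent the same coset in $\VV(I_K)^*/\RR^f$, Theorem~\ref{THM:slackGrassequiv} forces $\Lambda_1=\Lambda_2$ and hence $\pp_1=\pp_2$. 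The inverse assignment $[\GRVmap(\Lambda)]\mapsto \pi_{\overline{\mathcal{F}}(K)}(\Lambda)$ is then automatically well-defined and two-sided.

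The step I expect to require the most care is the closure subtlety flagged in the second paragraph: one must verify that intersecting with the nonzero locus really does recover $\pi_{\overline{\mathcal{F}}(K)}(Gr(K))\cap(\RR^*)^{|\overline{\mathcal{F}}(K)|}$ rather than a strictly larger set introduced by the Zariski closure in Lemma~\ref{LEM:sectionvariety}. Once that is settled, the remainder is routine bookkeeping from the factorization of $\GRVmap$ and the birational equivalence in Theorem~\ref{THM:slackGrassequiv}.
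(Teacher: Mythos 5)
Your overall route matches the paper's, which offers no separate proof and treats the corollary as an immediate consequence of Lemma~\ref{LEM:sectionvariety} and Theorem~\ref{THM:slackGrassequiv}. You correctly isolate the one step that actually requires argument, but the heuristic you offer to fill it is not a valid general principle: for projections of constructible sets it is simply false that ``the Zariski closure cannot add points on the open torus,'' so nothing you have said rules out that $\pi_{\overline{\mathcal{F}}(K)}(Gr(K))$ is a proper subset of $\VV(I_{d+1,v}(K))^*$. Moreover, your injectivity argument in the third paragraph explicitly presupposes lifts $\Lambda_i\in Gr(K)$ with $\pi_{\overline{\mathcal{F}}(K)}(\Lambda_i)=\pp_i$ exist, so the whole argument collapses if this surjectivity step fails.

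To actually close the gap, one should exhibit the lift constructively. Given $\pp\in\VV(I_{d+1,v}(K))^*$, populate the symbolic slack pattern with the signed coordinates of $\pp$ (putting zeros at positions indexed by $\mathcal{F}(K)$) to get a $v\times f$ matrix $S$. Proposition~\ref{PROP:sectionideal} shows the $(d+2)$-minors of $S$ vanish on $\pp$, and the flag/triangular-submatrix argument in the proof of Proposition~\ref{PROP:colscale} gives $\rank S = d+1$ because the torus condition makes the diagonal of that triangular block nonvanishing; hence $S\in\VV(I_K)^*$ and $\Lambda:=\rho(S)\in Gr(K)$. Lemma~\ref{LEM:inverses} then yields $\GRVmap(\Lambda) = S\cdot D$ for some invertible diagonal $D$, but to conclude that $\pi_{\overline{\mathcal{F}}(K)}(pl(\Lambda))$ is proportional to $\pp$ one must further argue that $D$ is forced to be a global scalar --- comparing Pl\"ucker coordinates that recur across columns and invoking the relations in $I_{d+1,v}(K)$ --- and this is precisely the kind of nontrivial bookkeeping that cannot be deferred. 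Until that step is written down, the proposal does not constitute a proof.
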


\section{The Gale model} \label{sec:galemodel}

Here we introduce one final realization space, the space of {\em Gale transforms} of a polytope. Gale transforms (and their normalized counterparts, Gale diagrams) were developed by Perles in the 1960s (recorded by Gr\"unbaum \cite[Section 5.4]{Grunbaum}), and have long found use in the study of polytopes. 

Given a realization $Q = \conv\{\qq_1,\ldots, \qq_v\}$ of $d$-polytope $P$, let $\mat{B}$ be a matrix whose columns form a basis for the space of affine dependencies among the vertices of $Q$; that is,
\[
\begin{bmatrix} 1 & \cdots & 1 \\ \qq_1 & \cdots & \qq_v \end{bmatrix} \cdot \mat{B} = 0, \quad \mat{B}\in\RR^{v\times (v-d-1)},\; \rk(\mat{B})=v-d-1.
\]
Let $\bb_i^\top$ be the $i$th row of $\mat{B}$. The {\em Gale transform} of $Q$  is the vector configuration $\GG = \{\bb_1,\ldots, \bb_v\} \subset \RR^{v-d-1}$. Recall by definition of a slack matrix $S_Q$, this implies
\[
\begin{bmatrix}\bb_1 & \cdots & \bb_v \end{bmatrix}\cdot S_Q = 0.
\]
Notice that for any $\mat{A}\in GL_{v-d-1}(\RR)$, the configuration $\{\mat{A}\bb_1,\ldots, \mat{A}\bb_v\}$ is also a Gale transform of $Q$, since $\mat{B}$ and $\mat{BA}^\top$ have the same column space.

\begin{definition}
By slight abuse of terminology, for each $\ss\in\VV(I_P)$ call a vector configuration $\GG_{\ss} = \{\bb_1,\ldots, \bb_v\}$ a {\em Gale transform} of $S_P(\ss)$ if the matrix $\mat{B}_{\ss}:=\begin{bmatrix}\bb_1 & \cdots & \bb_v \end{bmatrix}^\top$ is full rank and $\begin{bmatrix}\bb_1 & \cdots & \bb_v \end{bmatrix} S_P(\ss) = 0$.
\label{DEF:gale}\end{definition}

Notice that after normalizing the $\bb_i$'s to have unit length, two distinct Gale transforms may correspond to the same Gale diagram. This normalization corresponds to scaling the rows of slack matrix $S_Q$ which may change its column space, which is why we will work with Gale transforms and not Gale diagrams.

\begin{example} Let $P$ be the (abstract) triangular prism with facets $123$, $456$, $1245$, $1346$, and $2356$. Consider the element of $\VV(I_P)^*$ given by
\[
S_P(\ss) = \begin{bmatrix} 0 & 1 &  0 &  0 &  1 \\ 0 &  1 &  0 &  1 &  0 \\ 0 &  1 &  1 &  0 &  0 \\ 1 &  0 &  0 &  0 &  1 \\ 2 &  0 &  0 &  2 &  0 \\ 1 &  0 &  1 &  0 &  0 \end{bmatrix}.
\]
It is not hard to check that $\mathbbm{1}\notin\rho(S_P(\ss))$, so that by \cite[Theorem 2.2]{GMTWfirstpaper}, $S_P(\ss)$ is not a slack matrix of a realization of $P$, but it has a Gale transform given by the rows of $\mat{B}_{\ss}$ and pictured below.\\
\begin{center}
\begin{minipage}{0.45\textwidth}
\[
\mat{B}_{\ss} = \begin{bmatrix} -1 & 1 \\ 0 & -2 \\ 1 & 1 \\ 1 & -1 \\  0 & 1 \\ -1 & -1 \end{bmatrix}
\]
\end{minipage}\begin{minipage}{0.45\textwidth}
\begin{tikzpicture}
\draw [<->] (-2,0) -- (2,0);
\draw [<->] (0,-2.5) -- (0,2);
\draw (-1,1) node[inner sep = 0pt, minimum size = 0pt, label=left:$\textcolor{blue}{\bb_1}$] (b1) {};
\draw (0,-2) node[inner sep = 0pt, minimum size = 0pt, label=right:$\textcolor{blue}{\bb_2}$] (b2) {};
\draw (1,1) node[inner sep = 0pt, minimum size = 0pt, label=right:$\textcolor{blue}{\bb_3}$] (b3) {};
\draw (1,-1) node[inner sep = 0pt, minimum size = 0pt, label=right:$\textcolor{blue}{\bb_4}$] (b4) {};
\draw (0,1) node[inner sep = 0pt, minimum size = 0pt, label=160:$\textcolor{blue}{\bb_5}$] (b5) {};
\draw (-1,-1) node[inner sep = 0pt, minimum size = 0pt, label=left:$\textcolor{blue}{\bb_6}$] (b6) {};
\draw [blue,thick, ->] (0,0) -- (b1);
\draw [blue, thick, ->] (0,0) -- (b2);
\draw [blue,thick, ->] (0,0) -- (b3);
\draw [blue,thick, ->] (0,0) -- (b4);
\draw [blue,thick, ->] (0,0) -- (b5);
\draw [blue, thick,->] (0,0) -- (b6);
\draw (-2,1.7) node[] {$\GG_{\ss}$};
\end{tikzpicture}
\end{minipage}\end{center}
\label{EX:prismGaleext}
\end{example}

\begin{remark} One can easily check from the Gale transform $\GG_{\ss} = \{\bb_1,\ldots, \bb_v\}$ whether it comes from an actual realization of $P$, since $\mathbbm{1}\in\rho(S_P(\ss))$ if and only if $\bb_1+\cdots+\bb_v = 0$, by definition of $\GG_{\ss}$.
\end{remark}

Denote by $\GG(P)$ the set of all possible Gale transforms of a given abstract polytope $P$; that is,
\[
\GG(P) := \{\GG_{\ss} : \ss\in\VV(I_P)^*\}.
\]
Of course, for $K=P^h$ we already have $\VV(I_K)^*=\VV(I_P)^*$, so that
\begin{equation} \GG(K) := \{\GG_{\ss} : \ss\in\VV(I_K)^*\} = \GG(P). \label{EQ:genGale} \end{equation}

From Proposition~\ref{PROP:colscale} and the definition of Gale transform, it is clear that there is a one-to-one correspondence between $GL_{v-d-1}(\RR)$ equivalence classes of $\GG(K)$ and elements of~$\VV(I_K)^*$ up to column scaling.
For this reason, and since for fixed~$\ss$, each~$\GG_{\ss}$ just comes from a different choice of basis for the column space~$\rho(\mat{B}_{\ss})$,
it makes sense to consider $\GG(K)$ in the Grassmannian $Gr(v-d-1,v)$. Once again we consider how the combinatorics of $K$ is encoded in the Pl\"ucker coordinates of each $\GG_{\ss}\in\GG(K)$.

\begin{lemma} Let $\ss\in\VV(I_K)$. A set of rows $J\subsetneq [v]$ of $S_K(\ss)$ is dependent if and only if $[v]\backslash J$ indexes a set of rows of its Gale transform $\mat{B}_{\ss}$ which are dependent.
\label{LEM:Galeduality}
\end{lemma}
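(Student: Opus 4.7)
The plan is to exploit the defining relation $\mat{B}_{\ss}^{\top}S_K(\ss)=0$ together with the full-rank condition on $\mat{B}_{\ss}$ from Definition~\ref{DEF:gale}. Since $S_K(\ss)$ has rank $d+1$, its left null space has dimension $v-d-1$, so the columns of $\mat{B}_{\ss}$ form a basis of that left null space. Equivalently, writing $L:=\rho(S_K(\ss))\subseteq\RR^v$ for the column space of the slack matrix, the column space of $\mat{B}_{\ss}$ equals the orthogonal complement $L^{\perp}$.

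I would first translate the dependence of the rows of $S_K(\ss)$ into a support condition on a kernel vector: a nontrivial relation $\sum_{i\in J}\lambda_i\,(S_K(\ss))_i=0$ corresponds exactly to a nonzero $\lambda\in\RR^v$ with $\supp(\lambda)\subseteq J$ lying in $L^{\perp}=\operatorname{colsp}(\mat{B}_{\ss})$. Writing $\lambda=\mat{B}_{\ss}\mu$ for some $\mu\in\RR^{v-d-1}$, the injectivity of $\mat{B}_{\ss}$ gives $\lambda\neq 0$ if and only if $\mu\neq 0$, and the support requirement $\lambda_i=\bb_i^{\top}\mu=0$ for all $i\in[v]\setminus J$ is precisely the condition that $\mu$ is orthogonal to every vector in the collection $\{\bb_i:i\in[v]\setminus J\}$. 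Hence the rows of $S_K(\ss)$ indexed by $J$ are linearly dependent if and only if $\{\bb_i:i\in[v]\setminus J\}$ fails to span $\RR^{v-d-1}$, i.e., is linearly dependent. The converse implication is symmetric, since $(L^{\perp})^{\perp}=L$ lets one exchange the roles of $S_K(\ss)$ and $\mat{B}_{\ss}$ (and correspondingly of $J$ and $[v]\setminus J$) without any further work.

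The main bookkeeping hurdle is keeping straight the correspondence between a vector in $\RR^v$ supported on $J$ and a vector in $\RR^{v-d-1}$ orthogonal to the rows of $\mat{B}_{\ss}$ indexed by $[v]\setminus J$; this is where the index swap from $J$ to its complement enters. Beyond that, the argument uses only the orthogonality relation and the rank information already built into the definition of $\mat{B}_{\ss}$, with no further appeal to the polytope structure of $K$ or the geometry of the slack variety.
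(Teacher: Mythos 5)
Your argument correctly establishes that the rows of $S_K(\ss)$ indexed by $J$ are dependent if and only if $\{\bb_i : i \in [v]\setminus J\}$ fails to span $\RR^{v-d-1}$; this orthogonal-complement computation is exactly the matroid-duality route the paper indicates. The gap is the final ``i.e.'': \emph{failing to span} and \emph{being linearly dependent} are not the same thing. A single nonzero vector in $\RR^2$ fails to span yet is independent; conversely, a set of more than $v-d-1$ vectors may be dependent yet still span. The two notions coincide precisely when the number of vectors equals the ambient dimension, i.e.\ when $|[v]\setminus J| = v-d-1$, equivalently $|J| = d+1$, and your proof never invokes this hypothesis.

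Without that size restriction the stated equivalence actually fails: for the pentagon cone ($d=2$, $v=5$) and $J=\{1,2\}$, rows $1$ and $2$ of the slack matrix in Example~\ref{EX:pentagoncone} are independent, while the three Gale vectors $\bb_3,\bb_4,\bb_5 \in \RR^2$ are automatically dependent. The lemma is only ever applied to $J\in\mathcal F(K)\cup\overline{\mathcal F}(K)\subset\binom{[v]}{d+1}$ (in the definition of $Gr^*(K)$ and the proof of Proposition~\ref{PROP:galeGrassequiv}), so the implicit hypothesis $|J|=d+1$ is harmless in context, but you must state it at the point where you pass from ``does not span'' to ``dependent.'' Your closing ``symmetric'' remark does not repair this: running the orthogonal-complement argument with the roles of $S_K(\ss)$ and $\mat B_{\ss}$ swapped yields that rows $[v]\setminus J$ of $\mat B_{\ss}$ are dependent iff rows $J$ of $S_K(\ss)$ fail to span its $(d+1)$-dimensional row space --- again a spanning condition, not a dependence one. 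The size hypothesis is doing the real work in both directions.
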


\begin{remark} This lemma is weaker than the usual characterization of the combinatorics of a polytope from its Gale transform, namely that $J$ indexes a face of $P$ if and only if $J=[v]$ or if~$\mathbf{0}$ is in the relative interior of Gale vectors $\{\bb_j:j\in[v]\backslash J\}$.
This lemma is also a consequence of the fact that matrices whose columns form bases for orthogonal vector spaces define dual matroids (see \cite[Lemma 1.4.5]{W19} for a proof of Lemma~\ref{LEM:Galeduality}).
\end{remark}

\begin{example} Continuing Example~\ref{EX:prismGaleext}, the only sets of non-trivially dependent rows of $S_P$ are $\{1,2,4,5\}$, $\{1,3,4,6\}$, and $\{2,3,5,6\}$, which correspond to facets of~$P$. Looking at the drawing of the Gale transform in Example \ref{EX:prismGaleext}, we can see that the only dependent pairs of vectors are $\{3,6\}$, $\{2,5\}$ and $\{1,4\}$, as expected.

Notice that for this example, we still have $\mathbf{0}$ in the relative interior of $\{\bb_j:j\in[6]\backslash J\}$ for faces $J$ of $P$. However if we consider the slack matrix corresponding to $\ss'\in\VV(I_P)^*$, as given below, then its Gale transform does not have $\mathbf{0}\in\textup{rel int}\{\bb_2,\bb_5\}$ even though $\{1,3,4,6\}$ represents a facet.

\begin{center}
\begin{minipage}{0.4\textwidth} \[
S_P(\ss') = \begin{bmatrix} 0 & 1 &  0 &  0 &  1 \\ 0 &  1 &  0 &  1 &  0 \\ 0 &  1 &  1 &  0 &  0 \\ 1 &  0 &  0 &  0 &  1 \\ -2 &  0 &  0 &  -2 &  0 \\ 1 &  0 &  1 &  0 &  0 \end{bmatrix}
\]
\end{minipage}\hspace{0.1\textwidth}\begin{minipage}{0.4\textwidth}
\begin{tikzpicture}
\draw [<->] (-2,0) -- (2,0);
\draw [<->] (0,-2.5) -- (0,1.5);
\draw (-1,1) node[inner sep = 0pt, minimum size = 0pt, label=left:$\textcolor{blue}{\bb_1}$] (b1) {};
\draw (0,-2) node[inner sep = 0pt, minimum size = 0pt, label=right:$\textcolor{blue}{\bb_2}$] (b2) {};
\draw (1,1) node[inner sep = 0pt, minimum size = 0pt, label=right:$\textcolor{blue}{\bb_3}$] (b3) {};
\draw (1,-1) node[inner sep = 0pt, minimum size = 0pt, label=right:$\textcolor{blue}{\bb_4}$] (b4) {};
\draw (0,-1) node[inner sep = 0pt, minimum size = 0pt, label=left:$\textcolor{blue}{\bb_5}$] (b5) {};
\draw (-1,-1) node[inner sep = 0pt, minimum size = 0pt, label=left:$\textcolor{blue}{\bb_6}$] (b6) {};
\draw [blue,thick, ->] (0,0) -- (b1);
\draw [blue, thick, ->] (0,0) -- (b2);
\draw [blue,thick, ->] (0,0) -- (b3);
\draw [blue,thick, ->] (0,0) -- (b4);
\draw [blue,thick, ->] (0,0) -- (b5);
\draw [blue, thick,->] (0,0) -- (b6);
\draw (-2,1.5) node[] {$\GG_{\ss'}$};
\end{tikzpicture}
\end{minipage}\end{center}
\end{example}

Notice that this example shows that even after normalizing the $\bb_i$ we may not get the Gale diagram of a polytope. This is because $\GG(P)$ contains Gale transforms in which we allow negative scalings of the vectors, not just positive ones.

Using Lemma \ref{LEM:Galeduality}, we characterize the Gale transforms of combinatorial type $K$ in the Grassmannian as follows.

\begin{definition}
Define the  {\em dual Grassmannian of cone $K$} to be
\begin{align*}
Gr^*(K) := \left\{ \begin{array}{cc}
\parbox[c][][c]{3.4cm}{$\LL\in Gr(v-d-1,v)$\ :} &  \parbox[c][][c]{4.2cm}{$pl(\LL)_{[v]\backslash J} = 0 \ \forall J \in \mathcal{F}(K)$,\\ $pl(\LL)_{[v]\backslash J} \neq 0 \ \forall J \in \overline{\mathcal{F}}(K)$} \end{array}
\right\}
\end{align*}
\end{definition}

Notice that $Gr^*(K)$ and $Gr(K)$ are isomorphic under the standard isomorphism of Grassmannians which sends a subspace $\LL$ to its orthogonal complement $\LL^\perp$.
\begin{equation}
\begin{array}{rcl}
Gr(d+1,v) & \cong & Gr(v-d-1,v) \\
\LL & \leftrightarrow & \LL^\perp.
\end{array} \label{EQ:grassIso}\end{equation}
We will use this isomorphism in the proof of the following proposition which shows that $Gr^*(K)$ actually captures exactly the desired Gale transforms. 

\begin{proposition} There is a one-to-one correspondence between elements of $Gr^*(K)$ and elements of $\GG(K)$ modulo the action of $GL_{v-d-1}(\RR)$.
\label{PROP:galeGrassequiv}\end{proposition}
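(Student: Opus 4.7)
The plan is to exhibit an explicit bijection between $Gr^*(K)$ and $\GG(K)/GL_{v-d-1}(\RR)$ by sending an equivalence class of Gale transforms to the column space of a representative matrix. First, I would define the forward map
\[
\Phi : \GG(K)/GL_{v-d-1}(\RR) \to Gr^*(K), \qquad [\GG_\ss] \mapsto \rho(\mat{B}_\ss).
\]
This is well-defined on equivalence classes because any two Gale transforms $\mat{B}_\ss, \mat{B}'_\ss$ of the same $S_K(\ss)$ arise as bases of the same kernel, hence differ by right multiplication by an element of $GL_{v-d-1}(\RR)$, which preserves the column space by \eqref{EQ:colspaceequal}.

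Next I would verify that $\rho(\mat{B}_\ss) \in Gr^*(K)$. Fix $J \in \binom{[v]}{d+1}$. The Pl\"ucker coordinate $pl(\rho(\mat{B}_\ss))_{[v]\setminus J}$ is (up to sign) the determinant of the $(v-d-1)\times(v-d-1)$ submatrix of $\mat{B}_\ss$ with rows indexed by $[v]\setminus J$, and it vanishes exactly when those rows are linearly dependent. By Lemma~\ref{LEM:Galeduality}, this occurs if and only if the rows of $S_K(\ss)$ indexed by $J$ are dependent. Now if $J \in \mathcal{F}(K)$ then the $d+1$ rays in $J$ all lie in a common $d$-dimensional facet and are thus linearly dependent, giving $pl(\rho(\mat{B}_\ss))_{[v]\setminus J}=0$. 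Conversely if $J \in \overline{\mathcal{F}}(K)$ then $J$ consists of a facet-spanning set of $d$ rays together with a single ray off that facet, hence is linearly independent in $\RR^{d+1}$, yielding $pl(\rho(\mat{B}_\ss))_{[v]\setminus J}\neq 0$. This establishes both defining conditions of $Gr^*(K)$.

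For the reverse direction, I would exploit the classical duality isomorphism $Gr(v-d-1,v)\cong Gr(d+1,v)$ sending $\LL\mapsto \LL^\perp$, under which $pl(\LL^\perp)_J = \pm pl(\LL)_{[v]\setminus J}$. Thus the defining conditions of $Gr^*(K)$ translate precisely into those of $Gr(K)$, so $\LL^\perp \in Gr(K)$. Applying Theorem~\ref{THM:slackGrassequiv} produces an element $\ss \in \VV(I_K)^*/\RR^f$ whose slack matrix has column space $\LL^\perp$. Any basis $\mat{B}_\ss$ of $\ker(S_K(\ss)^\top) = (\LL^\perp)^\perp = \LL$ is then a Gale transform of $S_K(\ss)$ with $\rho(\mat{B}_\ss) = \LL$, showing $\Phi$ is surjective. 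For injectivity, if $\rho(\mat{B}_\ss) = \rho(\mat{B}_{\tt})$, then $S_K(\ss)$ and $S_K(\tt)$ have the same column space $\LL^\perp$, and hence agree up to column scaling by Proposition~\ref{PROP:colscale}; since column scalings of $S_K$ preserve its kernel, the resulting Gale transforms lie in the same $GL_{v-d-1}(\RR)$-orbit.

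The main technical point to handle carefully will be the duality of Pl\"ucker coordinates under orthogonal complementation and the accompanying sign conventions, together with verifying that the appropriate rank/independence translation through Lemma~\ref{LEM:Galeduality} is valid for all $J \in \mathcal{F}(K) \cup \overline{\mathcal{F}}(K)$ and not just maximal or minimal dependent sets. Everything else reduces to linear algebra and the already established equivalence between $\VV(I_K)^*/\RR^f$ and $Gr(K)$ in Theorem~\ref{THM:slackGrassequiv}.
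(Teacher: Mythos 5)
Your proof is correct and follows essentially the same route as the paper's: send a Gale transform to its column space, verify membership in $Gr^*(K)$ via Lemma~\ref{LEM:Galeduality}, and invert via the orthogonal-complement isomorphism $Gr(v-d-1,v)\cong Gr(d+1,v)$ together with Theorem~\ref{THM:slackGrassequiv}. You are more explicit than the paper — spelling out well-definedness, both Pl\"ucker conditions, and the injectivity step via Proposition~\ref{PROP:colscale} — whereas the paper compresses the forward direction to a single citation of Lemma~\ref{LEM:Galeduality} and the reverse to a single citation of Theorem~\ref{THM:slackGrassequiv}. One phrasing note: when you verify $\rho(\mat{B}_\ss)\in Gr^*(K)$ you speak of ``rays in $J$'' being linearly (in)dependent; for an arbitrary $\ss\in\VV(I_K)^*$ this should strictly be read as the corresponding rows of $S_K(\ss)$, whose dependency pattern matches $\mathcal{F}(K)/\overline{\mathcal{F}}(K)$ precisely because $\rho(S_K(\ss))\in Gr(K)$ by Theorem~\ref{THM:slackGrassequiv} — a fact you in effect invoke, so this is a matter of wording rather than a gap.
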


\begin{proof} Given an element $\GG_{\ss}\in\GG(K)$, we map it to the column space of $\mat{B}_{\ss}$. This space is an element of $Gr(v-d-1,v)$ since $\mat{B}_{\ss}$ is full rank and satisfies the conditions of $Gr^*(K)$ by Lemma~\ref{LEM:Galeduality}.

To see that each element of $Gr^*(K)$ represents the Gale transform of something in $\VV(I_K)^*$, we use \eqref{EQ:grassIso}.
As a map on Pl\"ucker coordinates this translates to
\begin{eqnarray}
\big(pl(\LL)_J\big)_{J\in\binom{[v]}{d+1}} & \leftrightarrow & \big({\rm sgn}(J)\cdot pl(\LL)_{[v]\backslash J}\big)_{J\in\binom{[v]}{d+1}}, \label{EQ:dualPl}
\end{eqnarray}
where ${\rm sgn}(J)$ is the sign of the permutation $(J, [v]\setminus J)$.
The result now follows from the fact that elements of $Gr(K)$ are in one-to-one correspondence with elements of $\VV(I_K)^*$, up to column scaling by Theorem~\ref{THM:slackGrassequiv}.
\end{proof}

\begin{example} Recall that for a cone $K$ over a pentagon, all sets $J\in \binom{[v]}{d+1}$ are facet extensions and hence
\[
Gr(K) = \{\LL\in Gr(3,5) : pl(\LL)_J \neq 0\quad \forall J\in\binom{[5]}{3}\}.
\]
From this we also get
\[
Gr^*(K) = \{\LL\in Gr(2,5) : pl(\LL)_J \neq 0\quad \forall J\in\binom{[5]}{2}\}.
\]
So for example, the matrix $\mat{B}$ below represents an element of $Gr^*(K)$ and hence a Gale transform of some $\ss\in\VV(I_K)^*$.
\begin{center}
\begin{minipage}{0.4\textwidth}
\[
\mat{B} = \begin{bmatrix} 1 & 1 \\ 1 & -1 \\ 0 & 1 \\ 2 & 0 \\ -1 & 2 \end{bmatrix}
\]
\end{minipage}\hspace{0.1\textwidth}\begin{minipage}{0.4\textwidth}
\begin{tikzpicture}
\draw [<->] (-2,0) -- (3,0);
\draw [<->] (0,-1.2) -- (0,2);
\draw (1,1) node[inner sep = 0pt, minimum size = 0pt, label=right:$\textcolor{blue}{\bb_1}$] (b1) {};
\draw (1,-1) node[inner sep = 0pt, minimum size = 0pt, label=right:$\textcolor{blue}{\bb_2}$] (b2) {};
\draw (0,1) node[inner sep = 0pt, minimum size = 0pt, label=right:$\textcolor{blue}{\bb_3}$] (b3) {};
\draw (2,0) node[inner sep = 0pt, minimum size = 0pt, label=above:$\textcolor{blue}{\bb_4}$] (b4) {};
\draw (-1,2) node[inner sep = 0pt, minimum size = 0pt, label=left:$\textcolor{blue}{\bb_5}$] (b5) {};
\draw [blue,thick, ->] (0,0) -- (b1);
\draw [blue, thick, ->] (0,0) -- (b2);
\draw [blue,thick, ->] (0,0) -- (b3);
\draw [blue,thick, ->] (0,0) -- (b4);
\draw [blue,thick, ->] (0,0) -- (b5);
\draw (-2,1) node[] {$\GG$};
\end{tikzpicture}
\end{minipage}\end{center}
To see to which element of the slack variety this Gale transform corresponds we recall that given $\LL\in Gr(K)$, we get a slack matrix via the map $\GRVmap$, which gives us the matrix in Example \ref{EX:sectionIdealPentagon}.

Using \eqref{EQ:dualPl}, we easily obtain a slack matrix without constructing $\LL$:
\[
S_K = \begin{bmatrix}
 0 &  p_{45} & p_{25} &  p_{23} &  0 \\
 0  & 0 &  -p_{15} &  -p_{13}  & p_{34} \\
 p_{45} & 0 &  0 & p_{12} &  -p_{24} \\
 -p_{35} &  -p_{15}  & 0 & 0 &  p_{23} \\
 p_{34}  & p_{14} &  p_{12}  & 0  & 0
\end{bmatrix} =
\begin{bmatrix}
0 & 4 & 1 & 1 & 0\\ 0 & 0 & -3 & -1 & -2\\ 4 & 0 & 0 & -2 & -2\\ -1 & -3 & 0 & 0 & 1\\ -2 & -2 & -2 & 0 & 0
\end{bmatrix}.
\]
\label{EX:pentagondualGrass}
\end{example}

As a simple corollary, we get a bijection from the Gale (dual Grassmannian) space to the slack variety (see Figure~\ref{FIG:relatemodelsmaster}).

\begin{corollary}
There is a birational equivalence between $Gr^*(K)$ and $\VV(I_K)^*$ up to column scaling.
\label{COR:slackDgrassequiv}
\label{COR:dualGrassSlackEquiv}
\end{corollary}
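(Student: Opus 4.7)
The plan is to obtain the corollary by composing two birational equivalences already built up in the paper. First, the standard Grassmannian duality $\perp\colon Gr(v-d-1,v)\to Gr(d+1,v)$ sending $\LL\mapsto \LL^\perp$ is a birational isomorphism, and under the Pl\"ucker embedding it is implemented by the signed reindexing in \eqref{EQ:dualPl}. The combinatorial conditions defining $Gr^*(K)$ are by design the image under \eqref{EQ:dualPl} of those defining $Gr(K)$: a $(d+1)$-subset $J$ indexes a facet (respectively a facet extension) of $K$ precisely when the Pl\"ucker coordinate indexed by the complement $[v]\setminus J$ is forced to vanish (respectively not vanish) on the dual side. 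Hence $\perp$ restricts to a birational equivalence $\perp\colon Gr^*(K)\to Gr(K)$, a fact already exploited inside the proof of Proposition~\ref{PROP:galeGrassequiv}.

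Second, Theorem~\ref{THM:slackGrassequiv} provides a birational equivalence $\GRVmap\colon Gr(K)\to \VV(I_K)^*/\RR^f$ with rational inverse $\VGRmap$. Composing with the duality of the previous paragraph yields the mutually inverse rational maps
\[
\GVmap := \GRVmap\circ \perp\colon Gr^*(K)\longrightarrow \VV(I_K)^*/\RR^f, \qquad \VGmap := \perp\circ\, \VGRmap,
\]
exactly as labelled in Figure~\ref{FIG:relatemodelsmaster}, and so deliver the desired birational equivalence. Concretely, $\GVmap(\LL)$ is the slack matrix whose entry in position $(i,F)$ is $\pm\,pl(\LL)_{[v]\setminus(\{i\}\cup J_F)}$, with signs obtained by composing \eqref{EQ:dualPl} with the orientation signs appearing in the definition of $\GRVmap$; this is precisely the recipe illustrated by Example~\ref{EX:pentagondualGrass}, and it bypasses any need to write down $\LL^\perp$ explicitly.

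The one point that might give pause is the compatibility of the two notions of equivalence: column scaling of slack matrices on one side and the ambient $GL_{v-d-1}(\RR)$-action on Gale transforms on the other. But this is already secured: Proposition~\ref{PROP:galeGrassequiv} identifies $Gr^*(K)$ with $\GG(K)$ modulo $GL_{v-d-1}(\RR)$, while Proposition~\ref{PROP:colscale} identifies column-scaling classes in $\VV(I_K)^*$ with the column spaces of their slack matrices, i.e.\ exactly the data of a Gale transform modulo its ambient linear action. Thus no new obstruction arises beyond cleanly tracking signs through \eqref{EQ:dualPl}, which is the only bookkeeping step in the argument.
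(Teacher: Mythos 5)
Your proposal is correct and matches the paper's intended argument: the corollary is stated without a separate proof precisely because it follows by composing the orthogonal-complement duality $\perp\colon Gr^*(K)\to Gr(K)$ (already established inside the proof of Proposition~\ref{PROP:galeGrassequiv} via \eqref{EQ:grassIso} and \eqref{EQ:dualPl}) with the birational equivalence $\GRVmap\colon Gr(K)\to \VV(I_K)^*/\RR^f$ of Theorem~\ref{THM:slackGrassequiv}, which is exactly the factorization $\GVmap = \GRVmap\circ\perp$ and $\VGmap = \perp\circ\VGRmap$ recorded in Figure~\ref{FIG:relatemodelsmaster}. Your additional remarks on sign bookkeeping and the compatibility of the $GL_{v-d-1}(\RR)$-action with column scaling (via Propositions~\ref{PROP:galeGrassequiv} and~\ref{PROP:colscale}) are accurate and in line with what the paper implicitly relies on.
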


\begin{remark} For polytopes, it is well-known that two realizations $Q,Q'$ are projectively equivalent if and only if their Gale transforms $\GG, \GG'$ are related by a linear transformation  and positive scaling of the vectors; that is, $\bb_i' = \lambda_i\mat{A}\bb_i$ for $\mat{A}\in GL_{v-d-1}(\RR)$, $\lambda_i\in\RR_{>0}$. An analogous result holds for our generalized Gale transforms. Two elements of $\VV(I_K)^*$  differ by row and column scaling if and only if their Gale transforms are related by a linear transformation and nonzero scaling of the vectors.
\end{remark}


%
%


\section{The reduced slack model} \label{sec:applications}

In this section, we take inspiration from the Grassmannian model in order to streamline the computations associated to the slack model in some cases. In the Grassmannian model we take any set of columns that generate the column space of a slack matrix and we saw that it preserves essentially all the information from the polytope. This suggests that we may not need to work with the full slack matrix, as we could recover the rest from any sufficiently large submatrix.

The tricky part is to guarantee that the same holds symbolically, i.e., if we simply have a submatrix of the symbolic slack matrix and impose the rank constraints, do we still expect to be able to recover the full symbolic slack matrix with the right support? We will explore this question, give some sufficient conditions for this idea to work, and present some examples of its application in order to illustrate how powerful the insights carried from one model to the others can be.

Recall that we denote the nonzero part of the slack variety by $\VV(I_P)^* := \VV(I_P)\cap (\RR^*)^t$. Notice that, by \cite[Corollary 1 p. 84]{M88} the slack variety coincides with the complex closure of $\VV(J)-\VV(\langle x_1,\ldots, x_t\rangle)$, the minor variety without coordinate hyperplanes, and hence $\overline{\VV(I_P)^*}=\VV(I_P)$.

\begin{lemma}
Let $F$ be a set of facets of a $d$-dimensional polytope $P$ containing a flag (so that, as in the proof of Proposition \ref{PROP:colscale}, $S_P$ contains a ($d+1$)-triangular  submatrix in columns $F$). Then there exists a birational map from the nonzero part of the slack variety $\VV(I_P)^*$ up to column scaling to the projection $\pi_F(\VV(I_P)^*)$ of $\VV(I_P)^*$ onto the columns of~$F$ up to column scaling.
\label{LEM:slackProjequivalence}
\end{lemma}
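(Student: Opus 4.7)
The strategy is to take the desired map to be the coordinate projection $\pi_F$ itself, verify that it descends to the quotients by column scaling, and then construct a rational inverse by reconstructing the missing columns from the combinatorial data of $P$. The forward direction is essentially immediate: $\pi_F$ is a polynomial map, and it sends a column scaling of $S_P(\ss)$ to a column scaling of its projection, so it descends to a well-defined rational map between $\VV(I_P)^*/\RR^f$ and $\pi_F(\VV(I_P)^*)/\RR^{|F|}$.

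For the inverse, fix $T \in \pi_F(\VV(I_P)^*)$ and let $S_P(\ss)$ be any preimage. The flag assumption, together with the argument in the proof of Proposition~\ref{PROP:colscale}, guarantees that $T$ contains a $(d+1)\times(d+1)$ triangular submatrix with nonzero diagonal, so $\rk(T) = d+1 = \rk(S_P(\ss))$. In particular the columns of $T$ already span the column space of $S_P(\ss)$, so setting $\LL := \rho(T)$ we have $\LL = \rho(S_P(\ss))$. Now each missing column of $S_P(\ss)$, corresponding to a facet $G \notin F$, must lie in $\LL$ and vanish precisely at the coordinates indexed by the vertices of $G$. Since the vertices of $G$ affinely span a hyperplane, the rows of any basis of $\LL$ indexed by $G$ have rank $d$, so the subspace of $\LL$ vanishing on those coordinates is one-dimensional. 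Hence the missing column is determined up to a single scalar, which is exactly the column-scaling ambiguity we are quotienting by.

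To turn this into a rational inverse $\psi$, use the invertible $(d+1)\times(d+1)$ submatrix of $T$ guaranteed by the flag as a concrete basis for $\LL$ within the relevant rows, and solve by Cramer's rule for the coordinates of each missing column. After fixing a normalization (for instance, setting one known-nonzero entry to $1$ in each reconstructed column) this produces a rational section $\psi \colon \pi_F(\VV(I_P)^*) \dashrightarrow \VV(I_P)^*$ which descends to the quotient. One then checks $\pi_F \circ \psi = \mathrm{id}$ on the nose and $\psi \circ \pi_F = \mathrm{id}$ up to column scaling on a dense open subset.

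The main obstacle, and the only place the flag hypothesis is genuinely used, is ensuring that the inverse is defined on a Zariski-dense open subset: the denominators produced by Cramer's rule are $(d+1)\times(d+1)$ minors of $T$, and the flag guarantees that one such minor is a product of nonzero diagonal entries of the triangular submatrix, hence does not vanish identically on $\pi_F(\VV(I_P)^*)$. This is exactly what makes $\psi$ a genuine rational inverse rather than merely a set-theoretic one, completing the birational equivalence.
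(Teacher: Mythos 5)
Your proposal is correct and is, up to notation, the same construction the paper uses: the inverse you build by taking the column space $\LL=\rho(T)$ and reconstructing each missing column as the (up-to-scale unique) vector of $\LL$ vanishing on the vertices of the corresponding facet is precisely $\GRVmap\circ\rho$, and your observation that the flag makes $\pi_F$ rank-preserving is the paper's identity $\rho\circ\pi_F=\VGRmap$. The difference is purely one of economy: the paper then finishes in one line by invoking $\GRVmap\circ\VGRmap=\mathrm{id}$ from Lemma~\ref{LEM:inverses} together with surjectivity of $\pi_F$, whereas you re-derive that inverse relation by hand. One small imprecision worth fixing: you justify that for a missing facet $G$ the rows of a basis matrix of $\LL$ indexed by $G$ have rank exactly $d$ by saying ``the vertices of $G$ affinely span a hyperplane,'' but an arbitrary $\ss\in\VV(I_P)^*$ need not come from an actual realization, so this geometric statement does not literally apply; the correct reason is that $\rho(S_P(\ss))\in Gr(K)$ (the well-definedness of $\VGRmap$), whose defining Pl\"ucker conditions force a $d$-subset of $G$ that spans it combinatorially to index independent rows while every $(d+1)$-subset of $G$ indexes dependent ones, yielding rank exactly $d$.
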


\begin{proof}
Let us consider the following diagram:
\begin{center}
\begin{tikzpicture}
\node (a) at (0,0) {$\VV(I_P)^*/\RR^f$};
\node (b) at (3.5,0) {$\pi_F(\VV(I_P)^*)/\RR^{|F|}$};
\node (c) at (0,-2) {$Gr(K)$};
\path[->,font=\scriptsize,>=angle 90]([xshift= 2pt]c.north) edge node[right] {$\GRVmap$} ([xshift= 2pt]a.south)
([xshift= -2pt]a.south) edge node[left] {$\VGRmap$} ([xshift= -2pt]c.north);
\path[->](a) edge node[above]{$\pi_F$} (b)
(b) edge node[below]{$\rho$} (c);
\end{tikzpicture}
\end{center}
where $\rho$ is the map that associates to a matrix its column space.

Notice that, since $F$ contains a flag we have $\rho \circ \pi_F = \VGRmap$, so that by Lemma~\ref{LEM:inverses} we see $\GRVmap \circ \rho \circ \pi_F = id_{\VV(I_P)^*/\RR^f}$, and then $\pi_F \circ \GRVmap \circ \rho \circ \pi_F = \pi_F$. Hence, $\pi_F \circ \GRVmap \circ \rho = id_{\pi_F(\VV(I_P)^*)/\RR^{|F|}}$ since $\pi_F$ is surjective.
\end{proof}

\begin{definition}
Let $F$ be a set of facets of a $d$-dimensional abstract polytope $P$ such that:
\begin{enumerate}
\item $F$ contains a flag,
\item all facets of $P$ not in $F$ are simplicial.
\end{enumerate}
Let $S_F$ be the submatrix of $S_P$ consisting of only the columns indexed by $F$. We call $S_F$ a \textit{reduced slack matrix} for $P$.

Let $I_F$ be the slack ideal of the reduced symbolic slack matrix $S_F(\xx)$.
Denote the nonzero part of its slack variety $\VV(I_F)$ 
by $\VV_F$; that is,
\[
\VV_F := \left\{S\in\RR^{v\times |F|} : \begin{array}{l} S \text{ has zero pattern of $S_P$ restricted to $F$} \\ \text{and }\rk(S) = d+1 \end{array}\right\}.
\]
\label{def:reducedSlackMatrix}
\end{definition}

\begin{remark} Contrary to what one might initially assume, $\VV_F$ is not the same as the projection, $\pi_F(\VV(I_P)^*)$, of the nonzero part of the slack variety onto $F$. The projection $\pi_F(\VV(I_P)^*)$ is also subject to the additional condition that each of the determinants of the projected matrix that is used to fill the remaining columns of the matrix (via the map $\GRVmap$) must also be nonzero.
In the following example we show that $\pi_F(\VV(I_P)^*)\subsetneq\VV_F$.
\label{REM:projInVF}
\end{remark}

\begin{example}
Let us consider the first nonpolytopal $3$-sphere in \cite[Table 2]{AS85} whose facets have the following vertex sets:
\[
12345, 12346, 12578, 12678, 14568, 34578, 2357, 2367, 3467, 4678.
\]

If there was a polytope $P$ with these facets, its symbolic slack matrix would be
\[
S_P(\xx)=\begin{bmatrix}
    0  &        0 &       0 &       0 &       0 &   x_1   &    x_2  &    x_3   & x_4    & x_5      \\
    0  &        0 &       0 &       0 &     x_6 &   x_7   &      0  &      0   & x_8    & x_9      \\
    0  &        0 &  x_{10} &  x_{11} &  x_{12} & 0       &      0  &      0   &   0    & x_{13}   \\
    0  &        0 &  x_{14} &  x_{15} &  0      & 0       & x_{16}  & x_{17}   &   0    & 0        \\
    0  &   x_{18} &       0 &  x_{19} &  0      & 0       &      0  & x_{20}   & x_{21} & x_{22}   \\
x_{23} &        0 &  x_{24} &       0 &  0      & x_{25}  & x_{26}  &      0   &   0    & 0        \\
x_{27} &   x_{28} &       0 &       0 &  x_{29} & 0       &      0  &      0   &   0    & 0        \\
x_{30} &   x_{31} &       0 &       0 &  0      & 0       & x_{32}  & x_{33}   & x_{34} & 0        \\
\end{bmatrix}.
\]
Let $F$ be the set consisting of the first $6$ facets of $P$ and let $S_F(\xx)$ be the corresponding reduced (symbolic) slack matrix. Then its slack ideal is
\[
I_F = \left\langle \begin{array}{c} x_{28}x_{30} - x_{27} x_{31}, x_{15} x_{18} x_{24} x_{30} - x_{14} x_{19} x_{23} x_{31}, \\
x_{11} x_{14} - x_{10} x_{15}, x_{15} x_{18} x_{24} x_{27} - x_{14} x_{19} x_{23} x_{28},\\
x_{11} x_{18} x_{24} x_{30} - x_{10} x_{19} x_{23} x_{31}, x_{11} x_{18} x_{24} x_{27} - x_{10} x_{19} x_{23} x_{28}
\end{array}
\right\rangle.
\]
Then we extend the slack matrix $S_F(\xx)$ to the full matrix $S_P(\xx)$ using the map $\GRVmap$, filling in the remaining entries with the corresponding Pl\"ucker coordinates. More precisely, we write the following polynomials in place of the variables $x_i$ with $i=2,3,4,5,8,9,13,16,17,20,21,22,26,32,33,34$:
{\allowdisplaybreaks \begin{gather*}
x_1x_6x_{11}x_{18}x_{27},\ \ -x_1x_6x_{11}x_{23}x_{28},\ \ -x_1x_{12}x_{15}x_{23}x_{28},\ \ -x_1x_{15}x_{23}x_{29}x_{31}, \\
-x_7x_{12}x_{15}x_{23}x_{28},\ \ x_6x_{15}x_{25}x_{28}x_{30}-x_6x_{15}x_{25}x_{27}x_{31}-x_7x_{15}x_{23}x_{29}x_{31}, \\
x_{12}x_{15}x_{25}x_{28}x_{30}-x_{12}x_{15}x_{25}x_{27}x_{31},\ \ x_7x_{12}x_{15}x_{18}x_{27},\ \ -x_7x_{12}x_{15}x_{23}x_{28}, \\
-x_6x_{11}x_{18}x_{25}x_{27}-x_7x_{12}x_{19}x_{23}x_{28}-x_7x_{11}x_{18}x_{23}x_{29},\ \ -x_{12}x_{15}x_{18}x_{25}x_{27},\\
-x_{15}x_{18}x_{25}x_{29}x_{30},\ \ x_6x_{11}x_{18}x_{25}x_{27}+x_7x_{12}x_{19}x_{23}x_{28}+x_7x_{11}x_{18}x_{23}x_{29}, \\
x_7x_{12}x_{19}x_{28}x_{30}+x_7x_{11}x_{18}x_{29}x_{30}-x_7x_{12}x_{19}x_{27}x_{31},\\
x_6x_{11}x_{25}x_{28}x_{30}-x_6x_{11}x_{25}x_{27}x_{31}-x_7x_{11}x_{23}x_{29}x_{31},\\
x_{12}x_{15}x_{25}x_{28}x_{30}-x_{12}x_{15}x_{25}x_{27}x_{31}.
\end{gather*}}\\[-10pt]
Call $f$ the last of these entries. Computing the saturation of $I_F$ with respect to~$f$ results in the trivial ideal $\langle 1 \rangle$, which tells us that the $3$-sphere is not realizable as a polytope. In other words, on the variety of $I_F$, the map $\GRVmap$ will always set entry~$x_{34}$ to zero.

Moreover, in this case $\overline{\pi_F(\VV(I_P))} \subsetneq \overline{\VV_F}$. One can check that $I_P = \langle 1\rangle$, so that ${\pi_F(\VV(I_P))} = \emptyset$, but the reduced slack matrix with all variables set to $1$, for example, lies in $\VV_F$. As we show in Lemma \ref{LEM:closureProjection}, the difference between these two varieties is the variety of an ideal generated by the product of some $(d+1)$-minors of $S_F(\xx)$. Denote this ideal by $K_F$, then more precisely, in the above example the ideal $K_F$ is generated by the product of all $5$-minors whose column indices are $\{1,2,4,5,6\}$ (corresponding to a flag) and whose row indices are of the form $\{i\} \cup J$, where $i=1,\dots,8$ and $J \in \{\{2,3,5,7\}, \{2,3,6,7\}, \{3,4,6,7\}, \{4,6,7,8\}\}$ come from the vertex sets of the simplicial facets not in $F$.

Notice that, $\dim I_F = 31-16=15$ in the polynomial ring in the variables of~$I_F$, since there are $16$ variables in the columns outside $F$. However, since $S_F(\xx)$ is a $8 \times 6$ matrix, we can scale $8+6-1=13$ variables (by \cite[Lemma 5.2]{GMTWsecondpaper}), that leave us $15-13=2$ degrees of freedom.
\label{EX:nonrealizableSphere}
\end{example}

\begin{remark}
Given an abstract polytopal sphere $P$, we can ask two different questions: checking whether $\VV(I_P) \neq \emptyset$ is an algebraic question that concerns realizability of $P$ as a matroid, as in Example \ref{EX:nonrealizableSphere}; proving that $\VV_+(I_P) \neq \emptyset$ is a semialgebraic question that asks about the realization of $P$ as a polytope. Usually checking non-emptyness of semialgebraic sets is harder. Realizability of spheres is a classical problem: polytopal and non-polytopal $3$-spheres with eight vertices were enumerated by Altshuler and Steinberg \cite{AS84,AS85}, those with nine vertices were enumerated more recently by Firsching \cite{F20}. On the other hand, Bokowski and Sturmfels \cite{BS87} described an algorithm to check polytopality of certain spheres. However, there are still many spheres for which it is not known whether they are polytopal or not, see e.g., \cite[Section 5]{CS19} and \cite[Remark 5.2]{Z20}. For the matroid case similar realizability issues arise, as one can see in \cite{RS91,FMM13}.
\end{remark}

From now on, instead of studying $\pi_F(\VV(I_P)^*)$, we can focus on $\pi_F(\VV(I_P))$ since they have the same Zariski closure, i.e.
\begin{equation}
\overline{\pi_F(\VV(I_P))} = \overline{\pi_F(\VV(I_P)^*)}.
\label{EQ:closureVPvsVIP}
\end{equation}
This is a consequence of the fact $\VV(I_P) = \overline{\VV(I_P)^*}$ and of basic properties of the Zariski topology.

In general, let $K_F$ be the ideal defined as in Example~\ref{EX:nonrealizableSphere}. That is $K_F$ is the ideal generated by the product of the $(d+1)$-minors of $S_F(\xx)$ which are used to fill the additional columns of a slack matrix via the map $\GRVmap$.

\begin{lemma} 
Let $F$ satisfy the conditions of Definition~\ref{def:reducedSlackMatrix}.  
Then
\[
\overline{\pi_F(\VV(I_P))} = \overline{\VV_F\setminus \VV(K_F)}.
\]
\label{LEM:closureProjection}
\end{lemma}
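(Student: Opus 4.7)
The plan is to deduce the equality of Zariski closures from the two set-level inclusions
\[
\pi_F(\VV(I_P)^*)\ \subseteq\ \VV_F\setminus\VV(K_F)\ \subseteq\ \pi_F(\VV(I_P)^*),
\]
combined with identity~\eqref{EQ:closureVPvsVIP}. The main tool is the map $\GRVmap$ from Section~\ref{sec:slackmodel}: if a reduced matrix has column space $\LL$, then the missing columns of a full slack matrix are obtained by evaluating appropriate Pl\"ucker coordinates of $\LL$ via~\eqref{EQ:slackasdets}.

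For the first inclusion, take $\ss\in \VV(I_P)^*$. The projection $\pi_F(\ss)$ has the zero pattern of $S_P$ restricted to $F$, and a flag through $F$ produces a $(d+1)\times(d+1)$ triangular submatrix of $\pi_F(\ss)$ with nonzero diagonal (as in the proof of Proposition~\ref{PROP:colscale}), so $\rk(\pi_F(\ss))=d+1$ and hence $\pi_F(\ss)\in\VV_F$. Each factor of the single generator of $K_F$ is a $(d+1)$-minor of $\pi_F(\ss)$ which, by~\eqref{EQ:slackasdets}, equals up to sign a nonzero entry of $\ss$ in a column outside $F$. The product of these factors is therefore nonzero at $\pi_F(\ss)$, so $\pi_F(\ss)\notin \VV(K_F)$.

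For the reverse inclusion, take $T\in \VV_F\setminus \VV(K_F)$ and build an explicit preimage $\tilde T\in \VV(I_P)^*$ with $\pi_F(\tilde T)=T$. Choose a flag $F_0\subseteq F$; the same triangularity argument guarantees that the corresponding $d+1$ columns of $T$ are linearly independent, so they form a basis of $\LL:=\rho(T)\in Gr(d+1,v)$. Keep the columns of $\tilde T$ on $F$ equal to $T$, and for each simplicial facet $j\notin F$ with spanning vertex set $J_j$ define
\[
\tilde T_{i,j}\;=\;\Delta_{i,J_j}\,\det T_{\{i\}\cup J_j,\,F_0},\qquad i\in[v],
\]
which is the value prescribed by $\GRVmap(\LL)$ in column $j$ up to an overall nonzero scaling of each new column. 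Two verifications then close the argument. The support of $\tilde T$ matches that of $S_P$: on $F$ it does by construction; on a new column $j$, if $i\in J_j$ the multiset $\{i\}\cup J_j$ has a repeated index and the submatrix determinant vanishes automatically, while if $i\notin J_j$ the minor is precisely one of the factors of the generator of $K_F$ and hence nonzero since $T\notin \VV(K_F)$. The rank of $\tilde T$ equals $d+1$ because every column lies in $\LL$ while $T=\pi_F(\tilde T)$ already has rank $d+1$; in particular every $(d+2)$-minor vanishes, placing $\tilde T$ in $\VV(I_P)^*$.

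The main obstacle, and the point at which the hypotheses of Definition~\ref{def:reducedSlackMatrix} genuinely enter, is the support verification in the second inclusion. The simpliciality of facets outside $F$ is exactly what forces the zero pattern on each new column $j$ to come from row repetitions rather than from a subtler rank drop in $T$, and it is also what identifies the nonzero entries of the new columns with the factors of the single generator of $K_F$; the flag condition in $F$ is what ensures the existence of the basis $F_0$ used in the explicit construction.
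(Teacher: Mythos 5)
Your proof is correct and takes the same route as the paper: deduce the closure equality from the set-level identity $\pi_F(\VV(I_P)^*) = \VV_F\setminus\VV(K_F)$, then apply \eqref{EQ:closureVPvsVIP}. The paper's own proof simply cites Remark~\ref{REM:projInVF} for that identity, whereas your argument actually supplies the two inclusions the remark asserts without proof, including the explicit lift $\tilde T$ of a point of $\VV_F\setminus\VV(K_F)$ and the observation that simpliciality of the facets outside $F$ is precisely what forces the correct zero pattern on the reconstructed columns.
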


\begin{proof}
By Remark \ref{REM:projInVF}, $\overline{\VV(I_P)^*} = \overline{\VV_F \setminus \VV(K_F)}$, and hence the claim follows by \eqref{EQ:closureVPvsVIP}.
\end{proof}

\begin{corollary}
The variety $\overline{\pi_F(\VV(I_P))}$ is the union of some of the irreducible components of $\overline{\VV_F}$. In particular, if $\overline{\VV_F}$ is irreducible, then either $\overline{\pi_F(\VV(I_P))}$ is empty or
\[
\overline{\pi_F(\VV(I_P))} = \overline{\VV_F}.
\]
\label{COR:closureProjection}
\end{corollary}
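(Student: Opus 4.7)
The plan is to use Lemma \ref{LEM:closureProjection} to reduce the claim to a purely topological statement: for a closed set $Z$ in affine space (here $Z = \overline{\VV_F}$) and another closed set $W$ (here $W = \VV(K_F)$), the closure $\overline{Z \setminus W}$ is the union of those irreducible components of $Z$ not contained in $W$. Once that reduction is carried out, the ``in particular'' half follows by specializing to the case where $Z$ has a single irreducible component.

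First, I would upgrade Lemma \ref{LEM:closureProjection} slightly by showing
\[
\overline{\VV_F \setminus \VV(K_F)} \;=\; \overline{\,\overline{\VV_F} \setminus \VV(K_F)\,}.
\]
The inclusion $\subseteq$ is immediate. For $\supseteq$, observe that $U := \overline{\VV_F} \setminus \VV(K_F)$ is open in $\overline{\VV_F}$, and $\VV_F$ is dense in $\overline{\VV_F}$ by definition; density is preserved under restriction to open subsets, so $\VV_F \cap U = \VV_F \setminus \VV(K_F)$ is dense in $U$, and the claim follows by taking closures.

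Next, I would decompose $\overline{\VV_F} = X_1 \cup \cdots \cup X_k$ into irreducible components. Since taking complements distributes over unions and closure commutes with finite unions,
\[
\overline{\,\overline{\VV_F} \setminus \VV(K_F)\,} \;=\; \bigcup_{i=1}^{k} \overline{X_i \setminus \VV(K_F)}.
\]
For each $i$, one of two things happens. If $X_i \subseteq \VV(K_F)$, then $X_i \setminus \VV(K_F) = \emptyset$ and contributes nothing. Otherwise $X_i \cap \VV(K_F)$ is a proper closed subset of the irreducible variety $X_i$, so $X_i \setminus \VV(K_F)$ is a nonempty open subset of $X_i$ and hence dense in $X_i$, giving $\overline{X_i \setminus \VV(K_F)} = X_i$. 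Combining this with Lemma \ref{LEM:closureProjection} yields
\[
\overline{\pi_F(\VV(I_P))} \;=\; \bigcup_{X_i \not\subseteq \VV(K_F)} X_i,
\]
which is exactly a union of irreducible components of $\overline{\VV_F}$, as claimed. The ``in particular'' statement is then the $k=1$ case: the single component $X_1 = \overline{\VV_F}$ is either contained in $\VV(K_F)$ (giving an empty projection closure) or not (giving equality with $\overline{\VV_F}$).

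No step here is deep; the only small care needed is in the first paragraph's density argument, which is why I would make it explicit rather than identify $\VV_F$ with $\overline{\VV_F}$ silently. Everything else is a standard application of irreducible decomposition in the Zariski topology.
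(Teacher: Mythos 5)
Your argument is correct and follows essentially the same route as the paper: apply Lemma~\ref{LEM:closureProjection}, decompose $\overline{\VV_F}$ into irreducible components, and use that a nonempty Zariski-open subset of an irreducible variety is dense. The only real difference is presentational --- you handle the general decomposition explicitly and then specialize to $k=1$, whereas the paper proves the irreducible case and appeals to ``a similar argument''; you also spell out the identification $\overline{\VV_F \setminus \VV(K_F)} = \overline{\overline{\VV_F} \setminus \VV(K_F)}$ that the paper uses silently, which is a worthwhile bit of extra care but not a change of method.
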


\begin{proof} First assume $\overline{\VV_F}$ is irreducible. Then writing
\[
\overline{\VV_F} = \left(\VV(K_F)\cap \overline{\VV_F}\right) \cup \overline{\overline{\VV_F} \setminus \VV(K_F)}
\]
tells us that either $\VV_F \subset \VV(K_F)$ or $\overline{\VV_F} = \overline{\overline{\VV_F} \setminus \VV(K_F) }$. By Lemma~\ref{LEM:closureProjection}, in the first case, we see that $\pi_F(\VV(I_P)) = \emptyset$, and in the second case $\overline{\VV_F} = \overline{\pi_F(\VV(I_P))}$.

A similar argument shows that if $\overline{\VV_F} = \cup_{i=1}^m W_i$ for some irreducible components $W_i$, then $\overline{\overline{\VV_F} \setminus \VV(K_F) } = \cup_{j=1}^s W_{i_j}$ for some $i_1,\ldots, i_s \in \{1,\ldots,  m\}$.
\end{proof}

Notice that, in Example \ref{EX:nonrealizableSphere}, $\overline{\VV_F}$ is irreducible over the rationals, and since the decomposition of the proof of Corollary~\ref{COR:closureProjection} is also over the rationals, we get that $\overline{\pi_F(\VV(I_P))} \subsetneq \overline{\VV_F}$ implies that $\pi_F(\VV(I_P)) = \emptyset$.

\begin{example} Consider the $3$-polytope $P$ which has $8$ vertices and facets $\{1357$,$1458$, $2367$, $2468$,$5678$,$134$,$234\}$, pictured below. Then its symbolic slack matrix is

\begin{minipage}{0.35\textwidth}
\begin{tikzpicture}
\draw (0,0) node[inner sep = 0pt, minimum size = 1pt, circle, draw, fill, label=below:$5$] (4) {};
\draw (0,2) node[inner sep = 0pt, minimum size = 1pt, circle, draw, fill, label=above:$1$] (0) {};
\draw (-1.6,1.4) node[inner sep = 0pt, minimum size = 1pt, circle, draw, fill, label=left:$3$] (2) {};
\draw (1.6,1.4) node[inner sep = 0pt, minimum size = 1pt, circle, draw, fill, label=right:$4$] (3) {};
\draw (0.4,0.1) node[inner sep = 0pt, minimum size = 1pt, circle, draw, fill, label=above:$2$] (1) {};
\draw (0.4,-1.8) node[inner sep = 0pt, minimum size = 1pt, circle, draw, fill, label=below:$6$] (5) {};
\draw (-1.6,-1) node[inner sep = 0pt, minimum size = 1pt, circle, draw, fill, label=left:$7$] (6) {};
\draw (1.6,-0.6) node[inner sep = 0pt, minimum size = 1pt, circle, draw, fill, label=right:$8$] (7) {};

\draw (0) -- (2) -- (3) -- (7) -- (5) -- (1) -- (2) -- (6) -- (5);
\draw (0) -- (3) -- (1);
\draw[dashed] (0) -- (4) -- (7);
\draw[dashed] (4) -- (6);
\end{tikzpicture}
\end{minipage}
\begin{minipage}{0.53\textwidth}
$$ S_P(\xx) \!=\! \left[\begin{array}{ccccccc}
0 & 0 & x_{1} & x_{2} & x_{3} & 0 & x_{4} \\
x_{5} & x_{6} & 0 & 0 & x_{7} & x_{8} & 0 \\
0 & x_{9} & 0 & x_{10} & x_{11} & 0 & 0 \\
x_{12} & 0 & x_{13} & 0 & x_{14} & 0 & 0 \\
0 & 0 & x_{15} & x_{16} & 0 & x_{17} & x_{18} \\
x_{19} & x_{20} & 0 & 0 & 0 & x_{21} & x_{22} \\
0 & x_{23} & 0 & x_{24} & 0 & x_{25} & x_{26} \\
x_{27} & 0 & x_{28} & 0 & 0 & x_{29} & x_{30}
\end{array}\right].
$$
\end{minipage}

Notice that the set of $5$ nonsimplicial facets $F$ contains a flag, hence a reduced slack matrix is
$$ S_F(\xx) = \left[\begin{array}{ccccccc}
0 & 0 & x_{1} & x_{2} & x_{3} \\
x_{5} & x_{6} & 0 & 0 & x_{7}  \\
0 & x_{9} & 0 & x_{10} & x_{11}  \\
x_{12} & 0 & x_{13} & 0 & x_{14}  \\
0 & 0 & x_{15} & x_{16} & 0  \\
x_{19} & x_{20} & 0 & 0 & 0  \\
0 & x_{23} & 0 & x_{24} & 0  \\
x_{27} & 0 & x_{28} & 0 & 0
\end{array}\right],
$$
which gives a reduced slack ideal $I_F$ in the polynomial ring\break $\RR[x_1, x_{2}, x_{3},x_{5},x_{6},x_{7},x_{9}, x_{10}, x_{11},x_{12},x_{13},x_{14},x_{15},x_{16},x_{19}, x_{20},x_{23},x_{24},x_{27},x_{28}]$ of dimension 16 with 41 generators. Notice that after scaling 12 rows and columns as in \cite[Lemma 5.2]{GMTWsecondpaper}, this results in a slack realization space of dimension 4. Now taking as our flag all nonsimplicial facets except $\{1,3,5,7\}$ (and, for clarity, setting some entries to one as remarked above) we see $\GRVmap(\rho(S_F(\xx)))$ equals
\[
\begingroup 
\setlength\arraycolsep{2pt}
\begin{bmatrix}
0 & 0 & x_1 & 1 & 1 & 0 & -x_1x_{10}+x_9+x_{10}-1 \\
x_5 & 1 & 0 & 0 & 1 & x_1x_{10}-x_9-x_{10}+1 & 0 \\
0 & x_9 & 0 & x_{10} & 1 & 0 & 0 \\
1 & 0 & 1 & 0 & 1 & 0 & 0 \\
0 & 0 & 1 & x_{16} & 0 & -x_1x_9x_{16}+x_9x_{16}+x_9 & x_9x_{16}-x_{10}-x_{16} \\
1 & x_{20} & 0 & 0 & 0 & x_1x_{10}x_{20}-x_{10}x_{20}+x_{20} & -x_{10}x_{20} \\
0 & x_{23} & 0 & 1 & 0 & x_1x_{10}x_{23}-x_1x_9-x_{10}x_{23}+x_9+x_{23} & -x_{10}x_{23}+x_9-1 \\
1 & 0 & x_{28} & 0 & 0 & x_9x_{28} & -x_{10}x_{28}
\end{bmatrix} .
\endgroup
\]
Since we know $P$ is realizable, the positive part of the slack variety will be nonempty. Taking all variables in the reduced slack matrix $S_F(\xx)$ to be positive, we can obtain a point $\GRVmap(\rho(S_F(\ss))) \in \VV(I_P)_+$ (and hence also restrict our maps to give equivalence of the actual realization spaces) by scaling as follows.
Looking at the monomials in the last row, we can see that the second to last column is already positive, whereas the last column should be multiplied by $-1$.

Next we claim that $\overline{\VV_F} = \overline{\pi_F(\VV(I_P))}$. Here we can prove this directly, without checking the irreducibility condition of Corollary~\ref{COR:closureProjection}. The ideal $K_F$ is generated by the product of the minors in the last two columns of the above matrix, and we can check that $I_F$ is already saturated  by the product of these minors.
Thus we get $ \overline{\overline{\VV_F} \setminus \VV(K_F)} = \overline{\VV_F}$, which proves the claim using Lemma~\ref{LEM:closureProjection}.
\label{EX:cutcube}
\end{example}

\begin{proposition}
Let $P$ be a realizable polytope and $F$ be a set of facets of $P$ satisfying the conditions of Definition~\ref{def:reducedSlackMatrix}. If $\overline{\VV_F}$ is irreducible, then
\[
\VV_F \times \CC^h \cong \VV(I_P)^*
\]
are birationally equivalent, where $h$ denotes the number of facets of $P$ outside $F$.
\label{PROP:reducedSlackModel}
\end{proposition}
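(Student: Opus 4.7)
The plan is to factor the desired equivalence through the projection $\pi_F$, combining Lemma~\ref{LEM:slackProjequivalence}, Corollary~\ref{COR:closureProjection}, and a birational trivialization of the $\CC^h$-bundle that records the column scalings of the facets outside $F$.

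First, since $P$ is realizable we have $\VV(I_P)^* \neq \emptyset$, hence $\pi_F(\VV(I_P)) \neq \emptyset$. Irreducibility of $\overline{\VV_F}$ together with Corollary~\ref{COR:closureProjection} then forces
\[
\overline{\pi_F(\VV(I_P))} \;=\; \overline{\VV_F},
\]
so that $\pi_F(\VV(I_P)^*)$ and $\VV_F$ are birationally equivalent.

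Next, I would show that the projection $\pi_F \colon \VV(I_P)^* \dashrightarrow \pi_F(\VV(I_P)^*)$ is birationally a trivial $\CC^h$-bundle, by exhibiting an explicit rational section. For a generic $S \in \pi_F(\VV(I_P)^*)$, the column space $\rho(S)$ lies in $Gr(K)$, and $\GRVmap(\rho(S))$ produces a full slack matrix $T_S \in \VV(I_P)^*$ with the same column space as $S$. By Proposition~\ref{PROP:colscale}, the $F$-columns of $T_S$ are a column-rescaling of $S$, so rescaling those $|F|$ columns uniquely to recover $S$ on $F$ yields a rational section $\sigma \colon \pi_F(\VV(I_P)^*) \dashrightarrow \VV(I_P)^*$ with $\pi_F \circ \sigma = \mathrm{id}$. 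Any $T \in \VV(I_P)^*$ then differs from $\sigma(\pi_F(T))$ only by a column scaling on its $h$ columns outside $F$, giving a birational isomorphism
\[
\VV(I_P)^* \;\dashrightarrow\; \pi_F(\VV(I_P)^*) \times \CC^h, \qquad T \;\mapsto\; \bigl(\pi_F(T),\,\lambda_T\bigr),
\]
where $\lambda_T \in (\CC^*)^h \subset \CC^h$ records those scalings. Concatenating the two birational equivalences yields the desired
\[
\VV(I_P)^* \;\sim\; \pi_F(\VV(I_P)^*) \times \CC^h \;\sim\; \VV_F \times \CC^h.
\]

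The main obstacle I anticipate is the careful construction of the section $\sigma$: the map $\GRVmap \circ \rho$ is only well-defined up to the $(\CC^*)^f$ column-scaling ambiguity on its image (Lemma~\ref{LEM:inverses}), so one must argue that the $(\CC^*)^{|F|}$ part of this ambiguity is precisely what a single rescaling on the $F$-columns can resolve. This is guaranteed by Proposition~\ref{PROP:colscale} applied to $S$, together with the flag assumption of Definition~\ref{def:reducedSlackMatrix}, which ensures that the $F$-columns alone carry the full rank $d+1$ information needed to reconstruct $\rho(S) \in Gr(K)$. A brief dimension check using Lemma~\ref{LEM:slackProjequivalence} then confirms that both sides have the same dimension, and the mutual inverseness of $\sigma$ and of the map displayed above follows by straightforward chasing on a dense open locus.
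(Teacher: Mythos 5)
Your proof is essentially the same as the paper's: both factor through $\pi_F$, invoke Corollary~\ref{COR:closureProjection} together with realizability to identify $\overline{\pi_F(\VV(I_P))}$ with $\overline{\VV_F}$, and use $\GRVmap \circ \rho$ to reconstruct a full slack matrix from a reduced one, with $\CC^h$ parametrizing the scalings of the $h$ columns outside $F$. You are slightly more careful than the printed proof in explicitly rescaling the $F$-columns of $\GRVmap(\rho(S))$ so that the section $\sigma$ satisfies $\pi_F \circ \sigma = \mathrm{id}$ exactly rather than just up to column scaling, a detail the paper's displayed maps leave implicit.
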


\begin{proof} The rational maps are as follows:
\begin{eqnarray*}
\VV_F \times \CC^h & \to & \VV(I_P)^* \\
(\ss_F, \lambda_1,\ldots, \lambda_h) & \mapsto & \GRVmap(\rho(\ss_F))
\end{eqnarray*}
where the $i$th column of $\GRVmap(\rho(\ss_F))$ which is not in $F$ is scaled, so its first nonzero entry is $\lambda_i$, and
\begin{eqnarray*}
\VV(I_P)^* & \to & \VV_F \times \CC^h \\
\ss & \mapsto & (\pi_F(\ss), \gamma_1,\ldots, \gamma_h),
\end{eqnarray*}
where $\gamma_i$ is the first nonzero entry of the $i$th column of $\ss$ which is not in $F$.
That these maps are inverses follows from Lemma~\ref{LEM:slackProjequivalence} and Corollary~\ref{COR:closureProjection}, where $\pi_F(\VV(I_P))$ is nonempty by assumption that $P$ is realizable.
\end{proof}

\begin{example}
Let $P$ be the Perles projectively unique polytope with no rational realization defined in \cite[p. 94]{Grunbaum}. This is an $8$-polytope with $12$ vertices and $34$ facets with the additional feature that it has a non-projectively unique face. Its symbolic slack matrix $S_P(\xx)$ is a $12 \times 34$ matrix with 120 variables.

Let $S_F(\xx)$ be the following submatrix of $S_P(\xx)$ whose 13 columns correspond to nonsimplicial facets:
\[
S_F(\xx) =
\begin{bmatrix}
0 & 0 & 0 & x_{1} & x_{2} & x_{3} & 0 & 0 & 0 & 0 & 0 & 0 & 0\\
0 & 0 & 0 & x_4 & 0 & 0 & x_5 & x_6 & x_7 & 0 & 0 & 0 & 0\\
0 & 0 & 0 & 0 & 0 & 0 & x_8 & 0 & 0 & x_9 & x_{10} & 0 & 0\\
0 & 0 & 0 & 0 & x_{11} & 0 & 0 & 0 & 0 & 0 & 0 & x_{12} & x_{13}\\
0 & 0 & 0 & 0 & 0 & 0 & 0 & x_{14} & 0 & x_{15} & 0 & x_{16} & 0\\
x_{17} & 0 & 0 & 0 & 0 & 0 & 0 & 0 & 0 & 0 & x_{18} & 0 & x_{19}\\
0 & x_{20} & 0 & 0 & 0 & 0 & 0 & 0 & x_{21} & 0 & 0 & 0 & 0\\
0 & 0 & x_{22} & 0 & 0 & x_{23} & 0 & 0 & 0 & 0 & 0 & 0 & 0\\
x_{24} & 0 & 0 & x_{25} & 0 & 0 & 0 & x_{26} & 0 & 0 & 0 & 0 & 0\\
0 & x_{27} & 0 & 0 & x_{28} & 0 & 0 & 0 & 0 & x_{29} & 0 & 0 & 0\\
0 & 0 & x_{30} & 0 & 0 & 0 & x_{31} & 0 & 0 & 0 & 0 & 0 & x_{32}\\
0 & 0 & 0 & 0 & 0 & x_{33} & 0 & 0 & x_{34} & 0 & x_{35} & x_{36} & 0
\end{bmatrix}.
\]
We then scale one entry in each column of $S_F(\xx)$ to $1$; more precisely we set $x_i=1$ for $i=1,3,5,6,7,9,10,11,13,16,17,22,27$. Computing the slack ideal $I_F$ of this scaled matrix, one of the generators of $I_F$ is the polynomial
\[
f(\xx) = x_{15}^2 x_{36}^2 + x_{15} x_{35} x_{36} - x_{35}^2.
\]
Its rehomogenization (with respect to the scaled variables) is
\begin{align*}
H(f(\xx)) & = (x_{10}x_{15}x_{36})^2 + (x_{10}x_{15}x_{36})(x_{9}x_{16}x_{35}) - (x_{9}x_{16}x_{35})^2 \\
	   & = (x_{10}x_{15}x_{36} - \alpha_1x_9x_{16}x_{35})(x_{10}x_{15}x_{36} - \alpha_2x_9x_{16}x_{35}),
\end{align*}
where $\alpha_1 = \frac{-1+\sqrt{5}}{2}, \alpha_2 = \frac{-1-\sqrt{5}}{2}$ are the roots of $x^2+x-1$. In \cite[Theorem 5.3]{GMTWsecondpaper} we show that $\overline{\VV_F}$ decomposes in at least two irreducible components, each coming from a factor of $H(f(\xx))$ and containing a point that maps to $\VV(I_P)^*$, proving that $\VV(I_P)$ is reducible.
\end{example}

\begin{remark}
Using a similar argument we can further reduce the slack matrix. More precisely, once we have the matrix $S_F$, we can remove from it the rows of $S_F$ containing at most $d$ zeros {such that their positions ``span'' the vertex corresponding to that row (i.e., the $k\leq d$ columns containing the zeros form a submatrix of rank~$k$). Furthermore,} the remaining matrix $S_G$ should still contain a flag ($d+1$ triangular submatrix) and have the correct rank $d+1$.
\end{remark}

\begin{example}
Recall the polytope of Example~\ref{EX:cutcube}. Its reduced slack matrix has the form
\[
S_F(\xx) = \left[\begin{array}{ccccccc}
0 & 0 & x_{1} & x_{2} & x_{3} \\
x_{5} & x_{6} & 0 & 0 & x_{7}  \\
0 & x_{9} & 0 & x_{10} & x_{11}  \\
x_{12} & 0 & x_{13} & 0 & x_{14}  \\
0 & 0 & x_{15} & x_{16} & 0  \\
x_{19} & x_{20} & 0 & 0 & 0  \\
0 & x_{23} & 0 & x_{24} & 0  \\
x_{27} & 0 & x_{28} & 0 & 0
\end{array}\right],
\]
and vertices $2,4,5,8$, for example, form a submatrix that still contains a $4\times 4$ triangular submatrix. Thus a ``super reduced'' slack matrix for $P$ would be
\[
\left[\begin{array}{ccccccc}
x_{5} & x_{6} & 0 & 0 & x_{7}  \\
x_{12} & 0 & x_{13} & 0 & x_{14}  \\
0 & 0 & x_{15} & x_{16} & 0  \\
x_{27} & 0 & x_{28} & 0 & 0
\end{array}\right],
\]
which again we scale according to \cite[Lemma 5.2]{GMTWsecondpaper} to get
\[
S_G(\xx)=\left[\begin{array}{ccccccc}
1 & 1 & 0 & 0 & 1  \\
x_{12} & 0 & 1 & 0 & 1  \\
0 & 0 & 1 & 1 & 0  \\
1 & 0 & x_{28} & 0 & 0
\end{array}\right].
\]
Reconstructing rows $6$ and $7$ is a straightforward application of $\GRVmap$ to $S_G(\xx)^\top$. To reconstruct rows $1$ and $3$, notice that we maintain the correct rank and zero pattern by filling row $1$ with $\begin{bmatrix}v_1 & \cdots & v_4\end{bmatrix}S_G(\xx)$ where $\vv^\top = \begin{bmatrix}v_1 & \cdots & v_4\end{bmatrix}$ is an element of the left kernel of the submatrix of $S_G(\xx)$ consisting of columns $1$ and~$2$ (where we want the zeros in row $1$). Since we have $2$ instead of  $d=3$ zeros in these rows, the kernel will be $2$-dimensional instead of $1$-dimensional (i.e. where the column/row is unique up to scaling). Filling out the rows as such we get the following slack matrix.
\[
\begingroup
\setlength\arraycolsep{2pt}
\begin{bmatrix}
0 & 0 & -x_{12}x_{28}y_1\!+\!y_0\!+\!y_1 & y_0 & y_1 \\
1 & 1 & 0 & 0 & 1 \\
0 & x_{12}z_0\!+\!z_1 & 0 & x_{28}z_1\!+\!z_0 & x_{12}z_0\!-\!z_0\!+\!z_1 \\
x_{12} & 0 & 1 & 0 & 1 \\
0 & 0 & 1 & 1 & 0 \\
-x_{12}x_{28}\!+\!x_{28}\!+\!1 & x_{28} & 0 & 0 \\
0 & 1 & 0 & -x_{12}x_{28}\!+\!x_{28}\!+\!1 & 0 \\
1 & 0 & x_{28} & 0 & 0
\end{bmatrix}.
\endgroup
\]
Finally notice that since $S_G(\xx)$ is a $4\times 5$ matrix, its slack ideal is trivial. Thus we see that the dimension of the slack variety from this parametrization is $2$ (free variables $x_{12},x_{28}$) $+\,1+1$ (extra dimension from each $2$-dimensional kernel) $=4$ which agrees with what we found in Example~\ref{EX:cutcube}. To see this explicitly, notice that in the first row, $y_1$ cannot be zero, so we can let $y= y_0/y_1$ and scale the first row by $y_1$. In the third row, $x_{12}z_0\!+\!z_1$ should not be zero, so that we scale this to be one in this row. Then letting $z = \frac{z_0}{x_{12}z_0+z_1}$, we get the following slack matrix with exactly 4 free variables.

\[
\begingroup
\setlength\arraycolsep{5pt}
\begin{bmatrix}
0 & 0 & -x_{12}x_{28}\!+\!y\!+\!1 & y & 1 \\
1 & 1 & 0 & 0 & 1 \\
0 & 1 & 0 & x_{28}-x_{12}x_{28}z+z & 1-z  \\
x_{12} & 0 & 1 & 0 & 1 \\
0 & 0 & 1 & 1 & 0 \\
-x_{12}x_{28}\!+\!x_{28}\!+\!1 & x_{28} & 0 & 0 \\
0 & 1 & 0 & -x_{12}x_{28}\!+\!x_{28}\!+\!1 & 0 \\
1 & 0 & x_{28} & 0 & 0
\end{bmatrix}.
\endgroup
\]
\end{example}

\begin{example}
Recall the nonpolytopal $3$-sphere $P$ of Example \ref{EX:nonrealizableSphere}. Its reduced slack matrix is
\[
S_F(\xx) = \left[\begin{array}{ccccccc}
0 & 0 & 0 & 0 & 0 & x_1\\
0 & 0 & 0 & 0 & x_6 & x_7\\
0 & 0 & x_{10} & x_{11} & x_{12} & 0\\
0 & 0 & x_{14} & x_{15} & 0 & 0\\
0 & x_{18} & 0 & x_{19} & 0 & 0\\
x_{23} & 0 & x_{24} & 0 & 0 & x_{25}\\
x_{27} & x_{28} & 0 & 0 & x_{29} & 0\\
x_{30} & x_{31} & 0 & 0 & 0 & 0
\end{array}\right],
\]
and vertices $1,2,3,5,6,7$ form a submatrix containing a triangle. Thus a ``super reduced'' slack matrix for $P$ with entries scaled according to \cite[Lemma~5.2]{GMTWsecondpaper} would be
\[
S_G(\xx) = \left[\begin{array}{ccccccc}
0 & 0 & 0 & 0 & 0 & 1\\
0 & 0 & 0 & 0 & 1 & 1\\
0 & 0 & 1 & 1 & 1 & 0\\
0 & 1 & 0 & x_{19} & 0 & 0\\
x_{23} & 0 & x_{24} & 0 & 0 & 1\\
1 & 1 & 0 & 0 & 1 & 0
\end{array}\right].
\]
Reconstructing rows $4,8$ by applying $\GRVmap$ to $S_G(\xx)^\top$ and columns $7,8,9,10$ by a second application of $\GRVmap$ to the resulting matrix, we get that $\GRVmap(\rho(S_F(\xx)))$ equals
\[
\begingroup
\setlength\arraycolsep{2pt}
\begin{small}
\begin{bmatrix}
0 & 0 & 0 & 0 & 0 & 1 & 1 & -x_{23} & x_{19}x_{23}^2 & x_{19}^2x_{23}^2x_{24}\\
0 & 0 & 0 & 0 & 1 & 1 & 0 & 0 & x_{19}x_{23}^2 & x_{19}^2x_{23}^2x_{24}\\
0 & 0 & 1 & 1 & 1 & 0 & 0 & 0 & 0 & \colorbox{yellow}{{\bf 0}}\\
0 & 0 & -x_{24} & -x_{19}x_{23} & 0 & 0 & -x_{19}x_{23} & x_{19}x_{23}^2 & 0 & 0\\
0 & 1 & 0 & x_{19} & 0 & 0 & 0 & -x_{19}x_{23}\!-\!x_{23}\!-\!1 & x_{19}x_{23} & x_{19}^2x_{23}x_{24}\\
x_{23} & 0 & x_{24} & 0 & 0 & 1 & x_{19}x_{23}\!+\!x_{23}\!+\!1 & 0 & 0 & 0\\
1 & 1 & 0 & 0 & 1 & 0 & 0 & 0 & 0 & 0\\
x_{19}x_{24} & x_{19}x_{24} & 0 & 0 & 0 & 0 & x_{19}x_{24} & -x_{19}x_{23}x_{24} & \colorbox{yellow}{{\bf 0}} & 0
\end{bmatrix}.
\end{small}
\endgroup
\]
Notice that in the above matrix there are two extra zeros (highlighted) with respect to $S_P(\xx)$. Hence, we can immediately conclude that $P$ is not realizable, without computing the slack ideal $I_G$. Notice that the presence of these extra zeros may depend on the choice of the flag. For some choices we get the extra zeros only after reducing the entries mod $I_G$.
\end{example}

Finally, we illustrate the power of this technique on a quasi-simplicial 4-sphere constructed by Criado and Santos in 2019 \cite{CS19}. This sphere is one of four small (topological) prismatoids constructed as a combinatorial abstraction of the (geometric) prismatoids introduced by Santos in his construction of a counterexample to the Hirsch conjecture. These four prismatoids were shown to be non-Hirsch and shellable, but it is an open question whether they are realizable as polytopes. As a final example of how to apply our new techniques, we show nonrealizability of one such prismatoid below. 

\begin{example} Let $P$ be the abstract polytope, labeled \#1963 in \cite{CS19}, with 14 vertices labeled $0, 1,\ldots, 13$ and with the following 94 facets
\begin{gather*}
\{0,1,2,3,4,5,6\}, \{7,8,9,10,11,12,13\} \\
\{0,1,2,6,10\}, \{0,1,4,7,11\}, \{0,2,3,8,10\}, \{1,2,6,9,13\}, \{1,4,7,8,11\}, \{2,4,7,9,10\}, \\
\{2,5,8,9,10\}, \{5,7,8,10,11\}, \{0,1,2,3,10\}, \{0,2,4,7,11\}, \{1,2,3,8,10\}, \{1,2,5,9,13\}, \\
\{2,4,7,8,11\}, \{1,2,7,9,12\}, \{2,5,8,9,11\}, \{6,7,8,11,12\}, \{0,1,3,4,11\}, \{0,1,3,7,11\}, \\
\{0,2,3,8,11\}, \{1,2,5,9,12\}, \{2,4,7,8,10\}, \{2,5,7,9,12\}, \{2,6,8,9,11\}, \{3,7,8,12,13\}, \\
\{0,2,3,4,11\}, \{0,1,4,7,12\}, \{0,2,6,8,11\}, \{0,1,5,9,12\}, \{2,5,7,8,11\}, \{2,5,7,9,10\}, \\
\{1,3,7,9,13\}, \{5,7,9,10,11\}, \{1,2,3,4,11\}, \{0,2,4,7,12\}, \{1,2,3,8,11\}, \{0,1,3,9,12\}, \\
 \{2,5,7,8,10\}, \{0,5,7,9,12\}, \{1,4,7,9,13\}, \{6,7,9,11,12\}, \{0,1,4,5,12\}, \{0,2,5,7,12\}, \\
 \{1,2,4,8,11\}, \{0,1,5,9,13\}, \{1,3,7,8,11\}, \{0,5,7,9,11\}, \{1,6,8,9,13\}, \{4,7,8,9,10\}, \\
\{0,2,4,5,12\}, \{0,2,5,7,11\}, \{1,2,4,8,10\}, \{0,2,5,9,13\}, \{0,3,7,8,11\}, \{0,6,7,9,11\}, \\
\{0,6,8,9,13\}, \{5,8,9,10,11\}, \{1,2,4,5,12\}, \{1,2,4,7,12\}, \{0,1,6,8,13\}, \{0,2,5,9,11\}, \\
 \{0,6,7,8,11\}, \{0,6,7,9,12\}, \{0,6,8,9,12\}, \{6,8,9,11,12\}, \{0,1,5,6,13\}, \{0,1,3,7,12\}, \\
 \{0,1,3,8,13\}, \{0,2,6,9,13\}, \{0,3,7,8,12\}, \{1,3,7,9,12\}, \{0,3,8,9,12\}, \{3,7,9,12,13\}, \\
 \{0,2,5,6,13\}, \{0,1,6,8,10\}, \{1,2,6,9,10\}, \{0,2,6,9,11\}, \{0,6,7,8,12\}, \{1,6,8,9,10\}, \\
 \{1,4,8,9,13\}, \{4,7,8,9,13\}, \{1,2,5,6,13\}, \{0,2,6,8,10\}, \{1,2,4,9,10\}, \{0,1,3,9,13\}, \\
 \{1,3,7,8,13\}, \{1,4,8,9,10\}, \{0,3,8,9,13\}, \{3,8,9,12,13\}, \{0,1,3,8,10\}, \{1,2,4,7,9\}, \\
 \{1,4,7,8,13\}, \{2,6,8,9,10\}
\end{gather*}

{The set of facets $F = \{F_0, F_1, F_2, F_3, F_4, F_{10}\}$ is a flag and the corresponding} reduced slack matrix for $P$ with the maximum number of variables set to one is the following:
\[
S_F(\xx) = \begin{bmatrix}
0 & 1 & 0 & 0 & 0 & 0 \\
0 & x_{10} & 0 & 0 & 1 & 0 \\
0 & 1 & 0 & x_{18} & 0 & 0 \\
0 & 1 & x_{22} & x_{23} & 0 & 0 \\
0 & x_{33} & x_{34} & 0 & x_{35} & 1 \\
0 & x_{43} & x_{44} & x_{45} & x_{46} & 1 \\
0 & 1 & 0 & 1 & 1 & 1 \\
x_{68} & 0 & x_{69} & 0 & x_{70} & 1 \\
x_{76} & 0 & x_{77} & x_{78} & 0 & 1 \\
x_{85} & 0 & x_{86} & x_{87} & x_{88} & 1 \\
x_{95} & 0 & 0 & 1 & 0 & 0 \\
1 & 0 & 1 & 0 & x_{105} & 1 \\
x_{113} & 0 & x_{114} & x_{115} & x_{116} & 1 \\
x_{127} & 0 & x_{128} & x_{129} & x_{130} & 1
\end{bmatrix}
\]
Now we reconstruct the remaining columns of the slack matrix. We can then recursively determine the sign of each column, as in Example~\ref{EX:cutcube}, by looking for monomial entries and setting the sign of all the entries of that column so they are positive. For example, if we reconstruct facet $\{0,2,6,8,10\}$ we get
$$\begin{bmatrix}
0 \\
-x_{18}x_{77}x_{95}\\
 0 \\
-x_{18}x_{22}x_{95}\\
-x_{18}x_{35}x_{77}x_{95}-x_{18}x_{34}x_{95}+x_{18}x_{77}x_{95}\\
-x_{18}x_{46}x_{77}x_{95}-x_{18}x_{44}x_{95}+x_{18}x_{77}x_{95}\\
 0 \\
-x_{18}x_{70}x_{77}x_{95}-x_{18}x_{69}x_{95}+x_{18}x_{77}x_{95}\\
 0 \\
-x_{18}x_{77}x_{88}x_{95}+x_{18}x_{77}x_{95}-x_{18}x_{86}x_{95}\\
 0 \\
-x_{18}x_{77}x_{95}x_{105}+x_{18}x_{77}x_{95}-x_{18}x_{95}\\
-x_{18}x_{77}x_{95}x_{116}+x_{18}x_{77}x_{95}-x_{18}x_{95}x_{114}\\
-x_{18}x_{77}x_{95}x_{130}+x_{18}x_{77}x_{95}-x_{18}x_{95}x_{128}
\end{bmatrix}
$$
which tells us that we must change the sign of each entry in the column to get positive entries.

From this process, we get a collection of polynomials that must be simultaneously positive. In particular, from this matrix we get polynomials that imply the inequalities $x_{76} > x_{77} > x_{34} > 1$. Furthermore, we have degree 2 polynomials including $-x_{34}x_{76} + x_{34} + x_{76} - x_{77} > 0$. The first inequalities give us
\[
\frac{x_{76}-x_{77}}{x_{76}-1} < 1,
\]
while the second gives
\[
\frac{x_{76} - x_{77}}{x_{76}-1} > x_{34},
\]
which is a contradiction to $x_{34}>1$. Thus we have found a subset of the entries of the slack matrix which cannot be simultaneously positive, so that $P$ is not realizable.
\end{example}

This example illustrates how we have successfully leveraged the connections established earlier in the paper to streamline slack ideal computations. For some polytopes, this simplification can be very significant. As we have seen, we can now automatically tackle, for instance, questions of realizability, rational realizability, or dimensionality of the realization space that were previously, with the standard slack ideal approach, outside the capabilities of computer algebra systems. These examples could potentially be further improved by recourse to numerical tools for analyzing real semialgebraic sets. The new approach to slack computations presented in this section highlights that framing the different models for realization spaces in a common setting, as we do in this paper, not only gives us a useful dictionary to jump between viewpoints, but may allow us to develop new tools for studying realization spaces.

\bibliographystyle{plain}
\bibliography{all}
\end{document}